\algnewcommand\algorithmicinput{\textbf{Input:}}
\algnewcommand\Input{\item[\algorithmicinput]}
\algnewcommand\algorithmicoutput{\textbf{Output:}}
\algnewcommand\Output{\item[\algorithmicoutput]}
\author{Guillaume Olikier\footnotemark[2] \and Kyle A. Gallivan\footnotemark[3] \and P.-A. Absil\footnotemark[4]}
\title{Low-rank optimization methods based on projected projected-gradient descent that accumulate at Bouligand stationary points\footnotemark[1]}
\newcommand{\N}{\mathbb{N}}
\newcommand{\R}{\mathbb{R}}
\DeclareMathOperator{\dom}{dom}
\DeclareMathOperator{\im}{im}
\newcommand{\dist}{d}
\newcommand{\proj}[2]{P_{#1}(#2)}
\newcommand{\ball}{B}
\newcommand{\norm}[1]{\Vert #1 \Vert}
\newcommand{\ip}[2]{\left\langle #1, #2 \right\rangle}
\DeclareMathOperator*{\argmin}{argmin}
\DeclareMathOperator*{\lip}{Lip}
\DeclareMathOperator{\s}{s}
\DeclareMathOperator*{\outlim}{\overline{Lim}}
\newcommand{\setmapsto}{\multimap}
\def\widebreve{\mathpalette\wide@breve}
\def\wide@breve#1#2{\sbox\z@{$#1#2$}%
     \mathop{\vbox{\m@th\ialign{##\crcr
\kern0.08em\brevefill#1{0.8\wd\z@}\crcr\noalign{\nointerlineskip}%
                    $\hss#1#2\hss$\crcr}}}\limits}
\def\brevefill#1#2{$\m@th\sbox\tw@{$#1($}%
  \hss\resizebox{#2}{\wd\tw@}{\rotatebox[origin=c]{90}{\upshape(}}\hss$}
\newcommand{\gencone}[4]{{#1}_{#2}^{#4}(#3)} 
\newcommand{\tancone}[2]{\gencone{T}{#1}{#2}{}} 
\newcommand{\restancone}[2]{\gencone{\widebreve{T}}{#1}{#2}{}} 
\newcommand{\norcone}[2]{\gencone{N}{#1}{#2}{}} 
\newcommand{\regnorcone}[2]{\gencone{\widehat{N}}{#1}{#2}{}} 
\newcommand{\oshort}[1]{\mkern 0.9mu\overline{\mkern-0.9mu#1\mkern-0.9mu}\mkern 0.9mu}
\newcommand{\ushort}[1]{\mkern 0.9mu\underline{\mkern-0.9mu#1\mkern-0.9mu}\mkern 0.9mu}
\newcommand{\tp}{\top}
\DeclareMathOperator{\tr}{tr}
\DeclareMathOperator{\rank}{rank}
\newcommand{\st}{\mathrm{St}}
\DeclareMathOperator{\diag}{diag}
\newcommand{\pgd}{\mathrm{PGD}}
\newcommand{\ppgd}{\mathrm{P}^2\mathrm{GD}}
\newcommand{\ppgdr}{\mathrm{P}^2\mathrm{GDR}}
\newcommand{\rfd}{\mathrm{RFD}}
\newcommand{\rfdr}{\mathrm{RFDR}}
\newcommand{\hrtr}{\mathrm{HRTR}}
\newtheorem{theorem}{Theorem}[section]
\newtheorem{proposition}[theorem]{Proposition}
\newtheorem{lemma}[theorem]{Lemma}
\newtheorem{corollary}[theorem]{Corollary}
\theoremstyle{definition}
\newtheorem{definition}[theorem]{Definition}
\newtheorem{example}[theorem]{Example}
\newtheorem{remark}[theorem]{Remark}
\begin{document}
\renewcommand{\thefootnote}{\fnsymbol{footnote}}
\footnotetext[1]{This work was supported by the Fonds de la Recherche Scientifique -- FNRS and the Fonds Wetenschappelijk Onderzoek -- Vlaanderen under EOS Project no 30468160, by the Fonds de la Recherche Scientifique -- FNRS under Grant no T.0001.23, by ERC grant 786854 G-Statistics from the European Research Council under the European Union's Horizon 2020 research and innovation program, and by the French government through the 3IA Côte d'Azur Investments ANR-23-IACL-0001 managed by the National Research Agency. K.~A. Gallivan was partially supported by the U.S. National Science Foundation under grant CIBR 1934157. This work was done in part while K.~A. Gallivan was visiting UCLouvain, supported by the ``Professeurs et chercheurs visiteurs'' budget of the Science and Technology Sector.}
\footnotetext[2]{Institute of Mathematics, \'{E}cole polytechnique fédérale de Lausanne (EPFL), 1015 Lausanne, Switzerland \mbox{(\href{mailto:guillaume.olikier@epfl.ch}{\nolinkurl{guillaume.olikier@epfl.ch}})}.}
\footnotetext[3]{Department of Mathematics, Florida State University, 1017 Academic Way, Tallahassee, FL 32306-4510, USA (\href{mailto:kgallivan@fsu.edu}{\nolinkurl{kgallivan@fsu.edu}}).}
\footnotetext[4]{ICTEAM Institute, UCLouvain, Avenue Georges Lemaître 4, 1348 Louvain-la-Neuve, Belgium \mbox{(\href{mailto:pa.absil@uclouvain.be}{\nolinkurl{pa.absil@uclouvain.be}})}.}
\renewcommand{\thefootnote}{\arabic{footnote}}

\maketitle

\begin{abstract}
This paper considers the problem of minimizing a differentiable function with locally Lipschitz continuous gradient on the algebraic variety of real matrices of upper-bounded rank. This problem is known to enable the formulation of various machine learning or signal processing tasks such as dimensionality reduction, collaborative filtering, and signal recovery. Several definitions of stationarity exist for this nonconvex problem. Among them, Bouligand stationarity is the strongest necessary condition for local optimality. This paper proposes two first-order methods that generate a sequence in the variety whose accumulation points are Bouligand stationary. The first method combines the well-known projected projected-gradient descent map with a rank reduction mechanism. The second method is a hybrid of projected gradient descent and projected projected-gradient descent. Both methods stand out in the field of low-rank optimization methods when considering their convergence properties, their streamlined design, their typical computational cost per iteration, and their empirically observed numerical performance. The theoretical framework used to analyze the proposed methods is of independent interest.
\medskip

\noindent
\textbf{Keywords:}
convergence analysis, stationary point, critical point, low-rank optimization, determinantal variety, steepest descent, tangent and normal cones, Clarke regularity, weighted low-rank approximation.
\medskip

\noindent
\textbf{Mathematics Subject Classification:} 14M12, 65K10, 90C26, 90C30, 40A05.
\end{abstract}

\section{Introduction}
\label{sec:Introduction}
Low-rank optimization concerns the problem of minimizing a real-valued function over a space of matrices subject to an upper bound on their rank. Applications abound in various fields of science and engineering. The present work addresses specifically the problem
\begin{equation}
\label{eq:OptiDeterminantalVariety}
\min_{X \in \R_{\le r}^{m \times n}} f(X)
\end{equation}
of minimizing a differentiable function $f : \R^{m \times n} \to \R$ with locally Lipschitz continuous gradient on the determinantal variety
\begin{equation}
\label{eq:RealDeterminantalVariety}
\R_{\le r}^{m \times n} \coloneq \{X \in \R^{m \times n} \mid \rank X \le r\},
\end{equation}
$m$, $n$, and $r$ being positive integers such that $r < \min\{m, n\}$.

Determinantal varieties constitute an important topic in algebraic geometry, especially over algebraically closed fields such as the field of complex numbers; see, e.g., \cite[Chapter~II]{ArbarelloCornalbaGriffithsHarris}, \cite[\S 1C]{BrunsVetter}, \cite[Lecture~9]{Harris}, \cite[Chapter~10]{LakshmibaiBrown}, and~\cite{Tsakiris2024} and references therein. Their geometry is well understood, which is particularly useful in the context of problem~\eqref{eq:OptiDeterminantalVariety}. For example, the smooth and singular parts of $\R_{\le r}^{m \times n}$ are known to be
\begin{equation*}
\R_r^{m \times n} \coloneq \{X \in \R^{m \times n} \mid \rank X = r\}
\end{equation*}
and
\begin{equation*}
\R_{< r}^{m \times n} \coloneq \{X \in \R^{m \times n} \mid \rank X < r\},
\end{equation*}
respectively, and explicit formulas for the tangent cone to $\R_{\le r}^{m \times n}$ are available \cite{OlikierMlinaricAbsilUschmajew}.

Problem~\eqref{eq:OptiDeterminantalVariety} includes as special cases many important problems such as weighted low-rank approximation \cite{SrebroJaakkola,GillisGlineur2011}, robust affine rank minimization \cite{JainMekaDhillon,GoldfarbMa,JainNetrapalliSanghavi,WeiCaiChanLeung,TongMaChi,CarlssonGerosaOlsson,DingDrusvyatskiyFazelHarchaoui,Zhang}, various formulations of matrix completion \cite{TannerWei2013,ZhouYangZhaoYu,FreundGrigasMazumder,YangFengSuykens,NguyenKimShim,DingChen,GaoAbsil}, multi-task learning \cite{AbernethyBachEvgeniouVert}, matrix sensing \cite{ChiLuChen,ZhuLiTangWakin}, robust PCA \cite{HaLiuBarber2020}, and logistic PCA \cite{ParkKyrillidisCaramanisSanghavi}.
These problems are known to enable the formulation of several machine learning or signal processing tasks such as dimensionality reduction, collaborative filtering, and signal recovery.

\subsection{Stationarity notions on the determinantal variety}
\label{subsec:StationarityNotionsDeterminantalVariety}
In general, problem~\eqref{eq:OptiDeterminantalVariety} is intractable \cite{GillisGlineur2011} and optimization methods are only expected to find a stationary point of this problem \cite{SchneiderUschmajew2015,UschmajewVandereycken2015,ZhouEtAl2016,HosseiniLukeUschmajew2019,LiSongXiu2019,HaLiuBarber2020,LiuBarber2020,UschmajewVandereycken2020IMA,JiaEtAl,LevinKileelBoumal2023,LiLuo2023,OlikierAbsil2023,OlikierUschmajewVandereycken,LevinKileelBoumal2025,Pauwels,OlikierWaldspurger,RebjockBoumal}.
A point $X \in \R_{\le r}^{m \times n}$ is said to be stationary for~\eqref{eq:OptiDeterminantalVariety} if $-\nabla f(X)$ is normal to $\R_{\le r}^{m \times n}$ at $X$. Several definitions of normality exist. Each one yields a definition of stationarity such that, possibly under mild regularity assumptions on~$f$, every local minimizer of $f|_{\R_{\le r}^{m \times n}}$ is stationary for~\eqref{eq:OptiDeterminantalVariety}. In particular, each of the three notions of normality in \cite[Definition~6.3 and Example~6.16]{RockafellarWets}, namely normality in the general sense, in the regular sense, and in the proximal sense, yields an important definition of stationarity. On $\R_{\le r}^{m \times n}$, normality in the proximal sense is equivalent to normality in the regular sense \cite[Proposition~7.1]{OlikierWaldspurger}. The sets of general and regular normals to $\R_{\le r}^{m \times n}$ at $X \in \R_{\le r}^{m \times n}$ are closed cones called the \emph{normal cone} to $\R_{\le r}^{m \times n}$ at $X$ and the \emph{regular normal cone} to $\R_{\le r}^{m \times n}$ at $X$ and denoted by $\norcone{\R_{\le r}^{m \times n}}{X}$ and $\regnorcone{\R_{\le r}^{m \times n}}{X}$, respectively. These normal cones are reviewed in Section~\ref{subsec:TangentNormalCones}. Importantly, they are nested as follows: for every $X \in \R_{\le r}^{m \times n}$,
\begin{equation}
\label{eq:NestedNormalCones}
\regnorcone{\R_{\le r}^{m \times n}}{X} \subseteq \norcone{\R_{\le r}^{m \times n}}{X},
\end{equation}
and $\R_{\le r}^{m \times n}$ is said to be \emph{Clarke regular} at $X$ if the inclusion is an equality \cite[Definition~6.4]{RockafellarWets}. As observed in \cite[Remark~4]{SchneiderUschmajew2015}, $\R_{\le r}^{m \times n}$ is Clarke regular at $X \in \R_{\le r}^{m \times n}$ if and only if $\rank X = r$.

The definitions of stationarity based on the normal cone and the regular normal cone are given in Definition~\ref{def:M/B-Stationarity}; comments on the terminology can be found, e.g., in \cite[\S 3]{OlikierWaldspurger}.

\begin{definition}
\label{def:M/B-Stationarity}
For problem~\eqref{eq:OptiDeterminantalVariety}, a point $X \in \R_{\le r}^{m \times n}$ is said to be:
\begin{itemize}
\item \emph{Mordukhovich stationary (M-stationary)} if $-\nabla f(X) \in \norcone{\R_{\le r}^{m \times n}}{X}$;
\item \emph{Bouligand stationary (B-stationary)} if $-\nabla f(X) \in \regnorcone{\R_{\le r}^{m \times n}}{X}$.
\end{itemize}
\end{definition}

These two notions of stationarity depend on $f$ only through its restriction $f|_{\R_{\le r}^{m \times n}}$ (see \cite{LevinKileelBoumal2023} for B-stationarity and Proposition~\ref{prop:MstationarityRestriction} for M-stationarity); this is a desirable property for a notion of stationarity. They are equivalent only on the smooth part $\R_r^{m \times n}$ of $\R_{\le r}^{m \times n}$. For problem~\eqref{eq:OptiDeterminantalVariety}, B-stationarity is the strongest necessary condition for local optimality, as shown by the following gradient characterization, which holds at every $X \in \R_{\le r}^{m \times n}$ \cite[Theorem~2.5 and Proposition~7.1]{OlikierWaldspurger}:
\begin{equation}
\label{eq:GradientCharacterizationRegularNormalsRealDeterminantalVariety}
\regnorcone{\R_{\le r}^{m \times n}}{X} = \left\{-\nabla h(X) ~\left|~\begin{array}{l} h : \R^{m \times n} \to \R \text{ is differentiable},\\ \nabla h \text{ is locally Lipschitz continuous},\\ X \text{ is a local minimizer of } h|_{\R_{\le r}^{m \times n}} \end{array}\right. \right\}.
\end{equation}
In comparison, M-stationarity is a weaker notion of stationarity, which is considered unsatisfactory in \cite[\S 4]{HosseiniLukeUschmajew2019}, \cite[\S 1]{LevinKileelBoumal2023}, and \cite[\S 2.1]{Pauwels}. This is illustrated in Example~\ref{example:MstationarityWithoutBstationarity} and detailed in Remark~\ref{rem:MstationarityDeterminantalVariety}.

\begin{example}[{M-stationarity without B-stationarity \cite[Example~1]{Pauwels}}]
\label{example:MstationarityWithoutBstationarity}
Consider the instance of~\eqref{eq:OptiDeterminantalVariety} where $m \coloneq n \coloneq 2$, $r \coloneq 1$, and $f : X \mapsto\frac{1}{2}\norm{X-\diag(1, 0)}^2$. As detailed in Section~\ref{subsec:TangentNormalCones}, $\{0_{2 \times 2}\} = \regnorcone{\R_{\le 1}^{2 \times 2}}{0_{2 \times 2}} \subsetneq \norcone{\R_{\le 1}^{2 \times 2}}{0_{2 \times 2}} = \R_{\le 1}^{2 \times 2}$. Hence, $\R_{\le 1}^{2 \times 2}$ is not Clarke regular at $0_{2 \times 2}$. Furthermore, $\nabla f(0_{2 \times 2}) = -\diag(1, 0)$. Thus, since $-\nabla f(0_{2 \times 2}) \in \norcone{\R_{\le 1}^{2 \times 2}}{0_{2 \times 2}} \setminus \regnorcone{\R_{\le 1}^{2 \times 2}}{0_{2 \times 2}}$, $0_{2 \times 2}$ is M-stationary but not B-stationary. Moreover, for all $\alpha \in (0, \infty)$, $0_{2 \times 2}-\alpha\nabla f(0_{2 \times 2}) = \diag(\alpha, 0) \in \R_{\le 1}^{2 \times 2}$. Therefore, moving from $0_{2 \times 2}$ in the direction of $-\nabla f(0_{2 \times 2})$ enables to decrease $f$ while staying in $\R_{\le 1}^{2 \times 2}$. Hence, $0_{2 \times 2}$ is not a local minimizer of $f|_{\R_{\le 1}^{2 \times 2}}$, but this cannot be seen based on M-stationarity.
\end{example}

Furthermore, as explained in \cite{LevinKileelBoumal2023}, distinguishing convergence to a B-stationary point from convergence to an M-stationary point is difficult in the sense that it cannot be done based on standard measures of B-stationarity, which are zero only at B-stationary points, such as
\begin{equation}
\label{eq:NormProjectionNegativeGradientOntoTangentCone}
\R_{\le r}^{m \times n} \to \R : X \mapsto \norm{\proj{\tancone{\R_{\le r}^{m \times n}}{X}}{-\nabla f(X)}},
\end{equation}
which returns the norm of any projection of $-\nabla f(X)$ onto the tangent cone to $\R_{\le r}^{m \times n}$ at $X$ (see \eqref{eq:StationarityMeasureExplicitFormula}), and
\begin{equation}
\label{eq:DistanceNegativeGradientToRegularNormalCone}
\R_{\le r}^{m \times n} \to \R : X \mapsto \dist(-\nabla f(X), \regnorcone{\R_{\le r}^{m \times n}}{X}),
\end{equation}
which returns the distance from $-\nabla f(X)$ to $\regnorcone{\R_{\le r}^{m \times n}}{X}$.
Indeed, these stationarity measures can fail to be lower semicontinuous at a point where $\R_{\le r}^{m \times n}$ is not Clarke regular (Proposition~\ref{prop:NotClarkeRegularImpliesApocalyptic}). As a matter of fact, when applied to certain instances of problem~\eqref{eq:OptiDeterminantalVariety}, optimization methods such as $\ppgd$ and $\rfd$, reviewed in Section~\ref{subsec:StateOfTheArt}, generate a convergent sequence in the smooth part $\R_r^{m \times n}$ of $\R_{\le r}^{m \times n}$ along which the measures of B-stationarity defined in~\eqref{eq:NormProjectionNegativeGradientOntoTangentCone} and \eqref{eq:DistanceNegativeGradientToRegularNormalCone} converge to zero and whose limit, which is in the singular part $\R_{< r}^{m \times n}$ of $\R_{\le r}^{m \times n}$, is M-stationary but not B-stationary, a phenomenon formalized by the notion of \emph{apocalypse} in \cite{LevinKileelBoumal2023}. In contrast, the measure of M-stationarity
\begin{equation}
\label{eq:DistanceNegativeGradientToNormalCone}
\R_{\le r}^{m \times n} \to \R : X \mapsto \dist(-\nabla f(X), \norcone{\R_{\le r}^{m \times n}}{X})
\end{equation}
is lower semicontinuous (Proposition~\ref{prop:LowerSemicontinuityMeasureMstationarity}). Therefore, if it goes to zero along a convergent sequence in $\R_{\le r}^{m \times n}$, then it is zero at the limit.

The coexistence of several notions of stationarity for an optimization problem is common. It arises notably in nonsmooth optimization, where several notions of subdifferential yield different notions of stationarity \cite{LiSoMa2020,PangRazaviyaynAlvarado,CuiPang}, and in mathematical programs with complementarity constraints, for which several notions of stationarity are presented and compared in \cite{ScheelScholtes}.

\subsection{State of the art}
\label{subsec:StateOfTheArt}
Five state-of-the-art methods aiming at solving problem~\eqref{eq:OptiDeterminantalVariety} are listed in Table~\ref{tab:OptimizationMethodsDeterminantalVariety} and described one by one in this section. Among them, $\pgd$, $\rfdr$, and \cite[Algorithm~1]{LevinKileelBoumal2023}---which we term Hooked Riemannian Trust-Region ($\hrtr$)---are the only three methods in the literature guaranteed to accumulate at B-stationary points of~\eqref{eq:OptiDeterminantalVariety}, which means that the accumulation points of every generated sequence are B-stationary for~\eqref{eq:OptiDeterminantalVariety}; the property of generating a sequence whose accumulation points are stationary (in some sense) is called ``(global) subsequential convergence'' to stationary points in \cite[Theorem~2(i)]{LiPong}, \cite[Theorem~1]{WangYinZeng}, \cite[Theorem~1]{FacchineiKungurtsevLamparielloScutari}, \cite[Theorem~7.2.4]{CuiPang}, \cite[Theorem~4.5]{BotDaoLi}, and \cite[Lemma~4.4]{CohenHallakTeboulle}. The other two state-of-the-art methods, namely $\ppgd$ and $\rfd$, do not always accumulate at B-stationary points of~\eqref{eq:OptiDeterminantalVariety}; see respectively \cite[\S 2.2]{LevinKileelBoumal2023} and \cite[\S 8]{OlikierAbsil2023}. However, $\rfd$ was used in~\cite{OlikierAbsil2023} to design a first-order method, $\rfdr$, provably accumulating at B-stationary points of~\eqref{eq:OptiDeterminantalVariety}. As explained in Section~\ref{subsec:Contribution}, in this paper, it is $\ppgd$ that is extended to produce two first-order methods, the proposed $\ppgdr$ and $\ppgd$--$\pgd$, mentioned in Table~\ref{tab:OptimizationMethodsDeterminantalVariety}, that provably accumulate at B-stationary points of~\eqref{eq:OptiDeterminantalVariety}.

\begin{table}[H]
\begin{center}
\begin{tabular}{*{4}{l}}
\hline
\emph{Method} & \emph{Order} & $\s(\cdot; f, \R_{\le r}^{m \times n}) \to 0$ & B-stationary\\[1pt]
\hline
$\pgd$ \cite[Algorithm~4.2]{OlikierWaldspurger} & $1$ & \ding{51} & \ding{51} \cite[Theorem~1.2]{OlikierWaldspurger}\\[1pt]
\hline
$\ppgd$ \cite[Algorithm~3]{SchneiderUschmajew2015} & $1$ & ? & \ding{55} \cite[\S 2.2]{LevinKileelBoumal2023}\\[1pt]
\hline
$\ppgdr$ (Definition~\ref{def:P2GDR}) & $1$ & \ding{51} & \ding{51} (Theorem~\ref{thm:P2GDRPolak})\\[1pt]
\hline
$\ppgd$--$\pgd$ (Definition~\ref{def:P2GD--PGD}) & $1$ & \ding{51} & \ding{51} (Theorem~\ref{thm:P2GD--PGD_Bstationary})\\[1pt]
\hline
$\rfd$ \cite[Algorithm~4]{SchneiderUschmajew2015} & $1$ & \ding{51} \cite[Theorem~3.10]{SchneiderUschmajew2015} & \ding{55} \cite[\S 8]{OlikierAbsil2023}\\[1pt]
\hline
$\rfdr$ \cite[Algorithm~3]{OlikierAbsil2023} & $1$ & \ding{51} & \ding{51} \cite[Theorem~6.2]{OlikierAbsil2023}\\[1pt]
\hline
$\hrtr$ \cite[Algorithm~1]{LevinKileelBoumal2023} & $2$ & \ding{51} & \ding{51} \cite[Theorem~1.1]{LevinKileelBoumal2023}\\[1pt]
\hline
\end{tabular}
\end{center}
\caption{B-stationarity properties of $\ppgdr$, $\ppgd$--$\pgd$, and five state-of-the-art methods aiming at solving problem~\eqref{eq:OptiDeterminantalVariety}. The property ``$\s(\cdot; f, \R_{\le r}^{m \times n}) \to 0$'' means that the measure of B-stationarity defined in~\eqref{eq:NormProjectionNegativeGradientOntoTangentCone} goes to zero along every convergent subsequence of the generated sequence. The property ``B-stationary'' means that the method accumulates at B-stationary points of~\eqref{eq:OptiDeterminantalVariety}, which is stronger (Proposition~\ref{prop:DeterminantalVarietyNoSerendipitousPoint}). The symbols ``\ding{51}'', ``\ding{55}'', and ``?'' respectively mean ``yes'', ``no'', and ``open question''.}
\label{tab:OptimizationMethodsDeterminantalVariety}
\end{table}

$\pgd$, which is short for \emph{projected gradient descent}, is appealing for its simplicity: given $X \in \R_{\le r}^{m \times n}$ as input, the $\pgd$ map---``map'' is added to the name of a method to refer to its iteration map---performs a backtracking projected line search along the direction of $-\nabla f(X)$, i.e., computes a projection $Y$ of $X-\alpha\nabla f(X)$ onto $\R_{\le r}^{m \times n}$ for decreasing values of the step size $\alpha \in (0, \infty)$ until $Y$ satisfies an Armijo condition. In the simplest version of $\pgd$, called \emph{monotone}, the Armijo condition ensures that the value of $f$ at the next iterate is smaller by a specified amount than the value at the current iterate. Alternatively, the value at the current iterate can be replaced with the maximum value of $f$ over a prefixed number of the previous iterates or with a weighted average of the values of $f$ at the previous iterates \cite{JiaEtAl,KanzowMehlitz,DeMarchi}. This version of $\pgd$ is called \emph{nonmonotone}.
This description of $\pgd$ brings to light its main drawback, already identified in \cite[\S 2.1]{WeiCaiChanLeung} and \cite[\S 9.4.4]{UschmajewVandereycken2020}, namely its computational cost per iteration, which can be prohibitive if $r \ll \min\{m, n\}$: if the gradient does not have a special structure such as sparse or low rank, which is the case, e.g., in weighted low-rank approximation \cite{GillisGlineur2011}, then every iteration requires projecting a possibly full-rank matrix onto $\R_{\le r}^{m \times n}$, which amounts to computing a truncated singular value decomposition (SVD) of this matrix. Furthermore, as pointed out in \cite{LevinKileelBoumal2023}, the iterates generated by $\pgd$ depend on the values taken by $f$ outside $\R_{\le r}^{m \times n}$, which are irrelevant to problem~\eqref{eq:OptiDeterminantalVariety}.

$\ppgd$, which is short for \emph{projected projected-gradient descent}, works like $\pgd$ except that it involves an additional projection: given $X \in \R_{\le r}^{m \times n}$ as input, the $\ppgd$ map performs a backtracking projected line search along a projection $G$ of $-\nabla f(X)$ onto $\tancone{\R_{\le r}^{m \times n}}{X}$, i.e., computes a projection $Y$ of $X+\alpha G$ onto $\R_{\le r}^{m \times n}$ for decreasing values of $\alpha \in (0, \infty)$ until $Y$ satisfies an Armijo condition. The structure of $\tancone{\R_{\le r}^{m \times n}}{X}$ implies that $X+\alpha G \in \R_{\le 2r}^{m \times n}$ for all $\alpha \in (0, \infty)$ and makes the projection onto $\R_{\le r}^{m \times n}$ easier to compute; see \cite[Algorithm~6]{Vandereycken2013}, \cite[\S 3.4]{SchneiderUschmajew2015}, \cite[\S 2.3]{WeiCaiChanLeung}, or \cite[\S 9.2.4]{UschmajewVandereycken2020}. Moreover, in the typical case where $\rank X = r$, projecting $-\nabla f(X)$ onto $\tancone{\R_{\le r}^{m \times n}}{X}$ merely involves orthonormal bases of $\im X$ and $\im X^\tp$, which are available since the projection onto $\R_{\le r}^{m \times n}$ is obtained by a truncated SVD, and matrix multiplications that each require at most $2mnr$ additions or multiplications of real numbers; see \cite[Algorithm~2]{SchneiderUschmajew2015}. Furthermore, as pointed out in \cite{LevinKileelBoumal2023}, the iterates generated by $\ppgd$ depend on $f$ only through its restriction $f|_{\R_{\le r}^{m \times n}}$.
In brief, the additional projection reduces the typical computational cost per iteration, but destroys the guarantee of accumulation at B-stationary points of~\eqref{eq:OptiDeterminantalVariety}.

$\rfd$, which is short for \emph{retraction-free descent}, relies on the \emph{restricted tangent cone} to $\R_{\le r}^{m \times n}$ (see \cite[\S 3]{OlikierAbsil2023} and \cite[\S 3.4]{SchneiderUschmajew2015}), a map that associates with every $X \in \R_{\le r}^{m \times n}$ a closed cone $ \restancone{\R_{\le r}^{m \times n}}{X} \subseteq \tancone{\R_{\le r}^{m \times n}}{X}$ such that $X+\restancone{\R_{\le r}^{m \times n}}{X} \subseteq \R_{\le r}^{m \times n}$ and, for all $Z \in \R^{m \times n}$, $\norm{\proj{\restancone{\R_{\le r}^{m \times n}}{X}}{Z}} \ge \frac{1}{\sqrt{2}} \norm{\proj{\tancone{\R_{\le r}^{m \times n}}{X}}{Z}}$.
Given $X \in \R_{\le r}^{m \times n}$ as input, the $\rfd$ map performs a backtracking line search along a projection $G$ of $-\nabla f(X)$ onto $\restancone{\R_{\le r}^{m \times n}}{X}$, which requires no retraction on $\R_{\le r}^{m \times n}$ (in the sense of \cite[\S 3.1.2]{HosseiniUschmajew2019}) since $X+\alpha G \in \R_{\le r}^{m \times n}$ for all $\alpha \in (0, \infty)$; the $\rfd$ map is thus retraction-free, i.e., it performs updates along straight lines. Moreover, the computational cost of projecting onto $\restancone{\R_{\le r}^{m \times n}}{X}$ is similar to that of projecting onto $\tancone{\R_{\le r}^{m \times n}}{X}$. However, as pointed out in \cite[\S 3.4]{SchneiderUschmajew2015}, saving the cost of computing a retraction does not necessarily confer to $\rfd$ a significant advantage over $\ppgd$ if $r \ll \min\{m, n\}$ since the latter requires projecting a point in $\R_{\le 2r}^{m \times n}$ onto $\R_{\le r}^{m \times n}$, which is typically much less expensive than evaluating $f$ or $\nabla f$. Furthermore, the directions in $\proj{\tancone{\R_{\le r}^{m \times n}}{X}}{-\nabla f(X)}$ are closer to $-\nabla f(X)$ than those in $\proj{\restancone{\R_{\le r}^{m \times n}}{X}}{-\nabla f(X)}$; while this does not imply that $\ppgd$ converges faster than $\rfd$, such an observation was made experimentally in \cite[\S 3.4]{SchneiderUschmajew2015}.

$\rfdr$ is $\rfd$ equipped with a \emph{rank reduction mechanism}, hence the ``R'' in the name. Given $X \in \R_{\le r}^{m \times n}$ as input, the $\rfdr$ map proceeds as follows: (i) it applies the $\rfd$ map to $X$, thereby producing a point $\tilde{X}$, (ii) if the $r$th singular value of $X$ is positive but smaller than or equal to some threshold $\Delta \in (0, \infty)$, it applies the $\rfd$ map to a projection $\hat{X}$ of $X$ onto $\R_{r-1}^{m \times n}$, then producing a point $\tilde{X}^\mathrm{R}$, and (iii) it outputs a point among $\tilde{X}$ and $\tilde{X}^\mathrm{R}$ that maximally decreases $f$. Thus, if the $r$th singular value of its input is not in $(0, \Delta]$, then the $\rfdr$ map reduces to the $\rfd$ map.

$\hrtr$ is a second-order method that deals with problem~\eqref{eq:OptiDeterminantalVariety} by \emph{lifting} it to a Riemannian manifold $\mathcal{M}$: given a smooth map $\varphi : \mathcal{M} \to \R^{m \times n}$ such that $\varphi(\mathcal{M}) = \R_{\le r}^{m \times n}$, it minimizes $f \circ \varphi$ on~$\mathcal{M}$; $\varphi$ is called a \emph{smooth lift} of $\R_{\le r}^{m \times n}$. This method takes advantage of the increasingly understood relations between the desirable points of the original problem and those of the lifted problem \cite{HaLiuBarber2020,LevinKileelBoumal2023,LevinKileelBoumal2025,RebjockBoumal}. This approach is appealing for its generality and elegance: it is applicable to every feasible set for which a suitable smooth lift is available and it generates iterates that depend on the cost function only through its restriction to the feasible set. However, on $\R_{\le r}^{m \times n}$, it suffers from a prohibitive computational cost in the frequently encountered case where $r \ll \min\{m, n\}$. Indeed, as explained in Section~\ref{subsec:ComparisonComputationalCostPerIteration}, every iteration of $\hrtr$, used with the rank factorization lift $\varphi : \R^{m \times r} \times \R^{n \times r} \to \R^{m \times n} : (L, R) \mapsto LR^\tp$, requires computing the smallest eigenvalue and an associated eigenvector of an order-$(m+n)r$ matrix representing the Hessian of the lifted cost function $f \circ \varphi$.

\subsection{Contribution}
\label{subsec:Contribution}
Motivated by the fact that $\ppgd$ performs better than $\rfd$ in the numerical experiment reported in \cite[\S 3.4]{SchneiderUschmajew2015} and that, in contrast with $\rfd$, $\ppgd$ does not require a restricted tangent cone, this paper extends $\ppgd$ to propose two first-order methods, called $\ppgdr$ and $\ppgd$--$\pgd$, that each generate a sequence in $\R_{\le r}^{m \times n}$ whose accumulation points are B-stationary for~\eqref{eq:OptiDeterminantalVariety} in the sense of Definition~\ref{def:M/B-Stationarity}. Both are based on the \emph{$\Delta$-rank} of a matrix: for every $\Delta \in (0, \infty)$, the $\Delta$-rank of $X \in \R^{m \times n}$, denoted by $\rank_\Delta X$, is defined as the number of singular values of $X$ that are greater than $\Delta$ \cite[(5.4.5)]{GolubVanLoan}.

$\ppgdr$ is $\ppgd$ equipped with a rank reduction mechanism, hence the ``R'' in the name. Given $X \in \R_{\le r}^{m \times n}$ as input, the $\ppgdr$ map (Algorithm~\ref{algo:P2GDRmap}) proceeds as follows: (i) for every $i \in \{0, \dots, \rank X - \rank_\Delta X\}$, it applies the $\ppgd$ map (Algorithm~\ref{algo:P2GDmap}) to a projection $\hat{X}^i$ of $X$ onto $\R_{\rank X - i}^{m \times n}$, thereby producing a point $\tilde{X}^i$, and (ii) it outputs a point among $\tilde{X}^0, \dots, \tilde{X}^{\rank X - \rank_\Delta X}$ that maximally decreases $f$.

As suggested by its name, $\ppgd$--$\pgd$ is a hybrid of $\ppgd$ and $\pgd$, which emerges naturally from the theoretical framework described in Section~\ref{sec:SufficientDescentMaps}. Given $X \in \R_{\le r}^{m \times n}$ as input, the $\ppgd$--$\pgd$ map (Algorithm~\ref{algo:P2GD--PGDmap}) applies the $\ppgd$ map if $\rank X = \rank_\Delta X$, and the monotone $\pgd$ map (Algorithm~\ref{algo:monotonePGDmap}) otherwise.

If the rank and $\Delta$-rank of the input are equal, then the $\ppgdr$ and $\ppgd$--$\pgd$ maps each reduce to the $\ppgd$ map. Furthermore, as seen in Section~\ref{subsec:StateOfTheArt}, the respective outputs of the $\pgd$ and $\ppgd$ maps are generated by a truncated SVD. Thus, the iterates of $\pgd$, $\ppgd$, $\ppgdr$, and $\ppgd$--$\pgd$ are naturally stored as compact SVDs; a \emph{compact} \cite{DemmelSVD} or \emph{reduced} \cite[Definition~2.27]{Hackbusch} SVD involves only the nonzero singular values and their associated singular vectors. Hence, the computational cost of the $\ppgdr$ map is merely the sum of those of the $\ppgd$ maps that it requires, and the computational cost of the $\ppgd$--$\pgd$ map is merely that of the $\ppgd$ or $\pgd$ map.

As explained next, $\rfdr$ and the proposed $\ppgdr$ and $\ppgd$--$\pgd$ compare favorably with the four other methods reviewed in Section~\ref{subsec:StateOfTheArt}, regarding the guarantee of accumulation at B-stationary points of~\eqref{eq:OptiDeterminantalVariety} (see Table~\ref{tab:OptimizationMethodsDeterminantalVariety}), the computational cost per iteration (see Section~\ref{subsec:ComparisonComputationalCostPerIteration}), and empirically observed numerical performance on two weighted low-rank approximation problems (see Sections~\ref{subsec:ComparisonWLRAapo} and~\ref{subsec:ComparisonMatrixCompletion}). However, in contrast with $\rfdr$, $\ppgdr$ and $\ppgd$--$\pgd$ can be defined on feasible sets for which no restricted tangent cone is known, such as the subset of $\R_{\le r}^{n \times n}$ containing the symmetric positive-semidefinite matrices, which appears notably in relaxations of combinatorial optimization problems; see, e.g., \cite{BurerMonteiro2003,BurerMonteiro2005,JourneeBachAbsilSepulchre2010,JiaEtAl} and references therein.

As seen in Section~\ref{subsec:StateOfTheArt}, $\pgd$ and $\hrtr$ each require a computationally prohibitive operation at every iteration, unless $f$ has some structure that can be exploited to reduce complexity.
In contrast, the other methods provably accumulating at B-stationary points of~\eqref{eq:OptiDeterminantalVariety}, namely $\rfdr$ and the proposed $\ppgdr$ and $\ppgd$--$\pgd$, require such an operation if and only if the $r$th singular value of their input is smaller than or equal to their parameter $\Delta \in (0, \infty)$, which should be rather infrequent in practice if $\Delta$ is chosen reasonably small (see Sections~\ref{subsec:ChoiceDelta}, \ref{subsec:ComparisonWLRAapo}, and~\ref{subsec:ComparisonMatrixCompletion}). Moreover, $\rfdr$ requires at most one such operation per iteration, in contrast with $\ppgdr$ and $\ppgd$--$\pgd$.

The six first-order methods listed in Table~\ref{tab:OptimizationMethodsDeterminantalVariety} are compared empirically on two weighted low-rank approximation problems in Sections~\ref{subsec:ComparisonWLRAapo} and~\ref{subsec:ComparisonMatrixCompletion}; $\hrtr$ was not included in the comparisons because it was at least $300$ times slower than the fastest method on one randomly generated instance of each problem. On the second problem, a matrix completion problem similar to that from \cite[\S 3.4]{SchneiderUschmajew2015}, $\ppgd$ and the proposed $\ppgdr$ and $\ppgd$--$\pgd$ outperform the other methods (see Section~\ref{subsec:ComparisonMatrixCompletion}). On the first problem, $\ppgd$ and $\rfd$ are outperformed by the other methods, and $\rfdr$ performs slightly better than $\ppgdr$ and $\ppgd$--$\pgd$, which themselves perform better than $\pgd$. Actually, on the one hundred randomly generated instances of the problem, $\ppgd$ and $\rfd$ follow an apocalypse on respectively twenty and all instances: they each generate a sequence in $\R_r^{m \times n}$ along which the B-stationarity measure~\eqref{eq:NormProjectionNegativeGradientOntoTangentCone} converges to zero and whose limit is M-stationary but not B-stationary. In contrast, the other methods each generate a sequence that converges to the global minimizer on every instance.

This paper improves and extends the early version of this work available in the unpublished e-print \cite{OlikierGallivanAbsil2022}.  Propositions~\ref{prop:MstationarityRestriction}--\ref{prop:LowerSemicontinuityMeasureMstationarity} in Section~\ref{sec:ElementsVariationalAnalysis} give important properties of B- and M-stationarity. A theoretical framework is developed in Section~\ref{sec:SufficientDescentMaps} that facilitates the analysis of the proposed $\ppgdr$ and $\ppgd$--$\pgd$ in Sections~\ref{sec:P2GDRdeterminantalVariety} and~\ref{sec:P2GD--PGD}. Finally, Section~\ref{sec:ComparisonWithStateOfTheArt} contains a systematic comparison of $\ppgdr$ and $\ppgd$--$\pgd$ with the state-of-the-art methods listed in Table~\ref{tab:OptimizationMethodsDeterminantalVariety} based on the computational cost per iteration and empirically observed numerical performance on two weighted low-rank approximation problems.

When it became public in the unpublished e-print \cite{OlikierGallivanAbsil2022}, $\ppgdr$ was the first method to answer positively the following question raised in the preliminary version \href{https://arxiv.org/abs/2107.03877v1}{arXiv:2107.03877v1} of \cite{LevinKileelBoumal2023}: ``Is there an algorithm running directly on $\R_{\le r}^{m \times n}$ that only uses first-order information about the cost function and which is guaranteed to converge to a B-stationary point?'' Since then, as mentioned above, $\rfdr$ was proposed and $\pgd$ was proven to also answer positively the question. However, $\ppgdr$ still compares favorably with those methods (and thus the second-order $\hrtr$), and its development and analysis serve as a basis for those of $\ppgd$--$\pgd$.

\subsection{Organization of the paper}
\label{subsec:OrganizationPaper}
This paper is organized as follows. Section~\ref{sec:ElementsVariationalAnalysis} reviews elements of variational analysis that are used throughout the paper. Section~\ref{sec:SufficientDescentMaps} presents the theoretical framework used to analyze $\ppgdr$ in Section~\ref{sec:P2GDRdeterminantalVariety} and $\ppgd$--$\pgd$ in Section~\ref{sec:P2GD--PGD}. In Section~\ref{sec:UpperBoundDistanceToDeterminantalVarietyFromTangent}, an upper bound on the distance to $\R_{\le r}^{m \times n}$ from each of its tangent lines is derived. This upper bound is used in Section~\ref{sec:P2GDmapDeterminantalVariety} to obtain a lower bound on the step size of the $\ppgd$ map. This lower bound plays an instrumental role in the analysis of $\ppgdr$ in Section~\ref{sec:P2GDRdeterminantalVariety}. $\ppgdr$, $\ppgd$--$\pgd$, and the state-of-the-art methods listed in Table~\ref{tab:OptimizationMethodsDeterminantalVariety} are compared in Section~\ref{sec:ComparisonWithStateOfTheArt}. Section~\ref{sec:Conclusion} gathers concluding remarks. Appendix~\ref{sec:PracticalImplementationHRTR} provides implementation details for $\hrtr$.

\section{Elements of variational analysis}
\label{sec:ElementsVariationalAnalysis}
For the convenience of the reader, this section reviews background material from variational analysis that is used throughout the paper. The only new results are Proposition~\ref{prop:DistanceToSetLowerSemicontinuous}, Lemma~\ref{lemma:ZeroOnDeterminantalVarietyImpliesZeroGradientOnSingularPart}, Propositions~\ref{prop:MstationarityRestriction} and~\ref{prop:NotClarkeRegularImpliesApocalyptic}, Corollary~\ref{coro:ApocalypticNotClarkeRegular}, and Propositions~\ref{prop:LowerSemicontinuityMeasureMstationarity} and~\ref{prop:DeterminantalVarietyNoSerendipitousPoint}. Propositions~\ref{prop:TriangleInequalityProjection} and \ref{prop:LocalDeltaRank} state basic facts and are proven for completeness.
Section~\ref{subsec:ConePolarityProjection} is about cones, polarity, and projection. Section~\ref{subsec:OuterLimitOuterContinuityCorrespondence} concerns the concepts of outer limit and outer semicontinuity of a correspondence. Section~\ref{subsec:TangentNormalCones} reviews tangent and normal cones. Section~\ref{subsec:StationarityMeasuresOfStationarity} focuses on the two notions of stationarity given in Definition~\ref{def:M/B-Stationarity} and the stationarity measures defined in \eqref{eq:NormProjectionNegativeGradientOntoTangentCone}--\eqref{eq:DistanceNegativeGradientToNormalCone}.

The material is presented for a general Euclidean vector space $\mathcal{E}$ with inner product $\ip{\cdot}{\cdot\cdot}$ and induced norm $\norm{\cdot}$. For every $x \in \mathcal{E}$ and $\rho \in (0, \infty)$, $\ball(x, \rho) \coloneq \{y \in \mathcal{E} \mid \norm{x-y} < \rho\}$ and $\ball[x, \rho] \coloneq \{y \in \mathcal{E} \mid \norm{x-y} \le \rho\}$ are the open and closed balls of center $x$ and radius $\rho$ in $\mathcal{E}$. Given $x \in \mathcal{E}$, the notation $\ball[x, 0] \coloneq \{x\}$ is also used. A subset $S$ of $\mathcal{E}$ is said to be \emph{locally closed} at $x \in \mathcal{E}$ if there exists $\rho \in (0, \infty)$ such that $S \cap \ball[x, \rho]$ is closed \cite[\S 1I]{RockafellarWets}.

In Sections~\ref{subsec:ConePolarityProjection}, \ref{subsec:TangentNormalCones}, and \ref{subsec:StationarityMeasuresOfStationarity}, specific facts are also provided about the case of interest in this work, namely $\mathcal{E} = \R^{m \times n}$ with the Frobenius inner product $\ip{X}{Y} \coloneq \tr Y^\tp X$.
For every $p, q \in \N$, $0_{p \times q}$ is the zero matrix in $\R^{p \times q}$, $I_p$ is the identity matrix in $\R^{p \times p}$, and, if $q \ge p$, then
\begin{equation*}
\st(p, q) \coloneq \left\{U \in \R^{q \times p} \mid U^\tp U = I_p\right\}
\end{equation*}
is a Stiefel manifold \cite[\S 3.3.2]{AbsilMahonySepulchre}.

\subsection{Cone, polarity, and projection}
\label{subsec:ConePolarityProjection}
A nonempty subset $K$ of $\mathcal{E}$ is called a \emph{cone} if $x \in K$ implies $\alpha x \in K$ for all $\alpha \in [0, \infty)$ \cite[\S 3B]{RockafellarWets}.
For every cone $K \subseteq \mathcal{E}$, the set
\begin{equation*}
K^* \coloneq \{w \in \mathcal{E} \mid \ip{v}{w} \le 0 \; \forall v \in K\}
\end{equation*}
is a closed convex cone called the \emph{polar} of $K$ \cite[6(14)]{RockafellarWets}.

For every nonempty subset $S$ of $\mathcal{E}$ and every $x \in \mathcal{E}$, $\dist(x, S) \coloneq \inf_{y \in S} \norm{x-y}$ is the distance from $x$ to $S$, and $\proj{S}{x} \coloneq \argmin_{y \in S} \norm{x-y}$ is the projection of $x$ onto $S$; the set $\proj{S}{x}$ is nonempty and compact if $S$ is closed \cite[Example~1.20]{RockafellarWets}. In what follows, a singleton is sometimes identified with its unique element.

Proposition~\ref{prop:TriangleInequalityProjection} is invoked in the proof of Lemma~\ref{lemma:P2GDRmapPolak}.

\begin{proposition}
\label{prop:TriangleInequalityProjection}
Let $S$ be a nonempty subset of $\mathcal{E}$. For all $x \in S$ and $y \in \mathcal{E}$, $\proj{S}{y} \subseteq \ball[x, 2\norm{x-y}]$.
\end{proposition}

\begin{proof}
Let $x \in S$ and $y \in \mathcal{E}$. The inclusion is trivially true if $\proj{S}{y}$ is empty. If $\proj{S}{y}$ is nonempty, then, for all $z \in \proj{S}{y}$, $\norm{x-z} \le \norm{x-y} + \norm{y-z} = \norm{x-y} + \dist(y, S) \le 2 \norm{x-y}$.
\end{proof}

Proposition~\ref{prop:ProjectionOntoClosedCone} shows that the projection onto a closed cone enjoys an orthogonality property.

\begin{proposition}[{\cite[Proposition~A.6]{LevinKileelBoumal2023}}]
\label{prop:ProjectionOntoClosedCone}
Let $K \subseteq \mathcal{E}$ be a closed cone. For all $x \in \mathcal{E}$ and $y \in \proj{K}{x}$,
\begin{equation*}
\ip{x}{y} = \norm{y}^2
\end{equation*}
and thus
\begin{equation*}
\norm{y}^2 = \norm{x}^2 - \dist(x, K)^2.
\end{equation*}
\end{proposition}

The proposed $\ppgdr$ exploits that, according to the Eckart--Young theorem \cite{EckartYoung1936}, projecting onto the closed cone $\R_{\le r}^{m \times n}$ amounts to computing a truncated SVD. The rest of this section reviews related results that are used in Sections~\ref{sec:UpperBoundDistanceToDeterminantalVarietyFromTangent} and \ref{sec:P2GDRdeterminantalVariety}.

The singular values of $X \in \R^{m \times n}$ are denoted by $\sigma_1(X) \ge \dots \ge \sigma_{\min\{m, n\}}(X) \ge 0$. Moreover, if $X \ne 0_{m \times n}$, then $\sigma_{\rank X}(X)$ is denoted by $\sigma_{\min}(X)$. The spectral norm of $X \in \R^{m \times n}$, denoted by $\norm{X}_2$, equals $\sigma_1(X)$ \cite[Example~5.6.6]{HornJohnson}.
Proposition~\ref{prop:SingularValuesLipschitz} shows that the singular values are Lipschitz continuous with Lipschitz constant $1$, a property that is used to prove Proposition~\ref{prop:LocalDeltaRank} and Lemma~\ref{lemma:P2GDRmapPolak}.

\begin{proposition}[{\cite[Corollary~8.6.2]{GolubVanLoan}}]
\label{prop:SingularValuesLipschitz}
For all $X, Y \in \R^{m \times n}$ and $i \in \{1, \dots, \min\{m, n\}\}$,
\begin{equation*}
|\sigma_i(X)-\sigma_i(Y)| \le \norm{X-Y}_2 \le \norm{X-Y}.
\end{equation*}
\end{proposition}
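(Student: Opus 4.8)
The plan is to deduce both inequalities from the min--max (Courant--Fischer) variational characterization of the singular values together with the elementary relation between the spectral and Frobenius norms. Recall that, for any $A \in \R^{m \times n}$ and any $j \in \{1,\dots,\min\{m,n\}\}$, the $j$-th largest eigenvalue of the symmetric positive semidefinite matrix $A^\tp A$ equals $\sigma_j(A)^2$, so Courant--Fischer gives
\begin{equation*}
\sigma_j(A) = \min_{\substack{V \subseteq \R^n \\ \dim V = n-j+1}}\ \max_{\substack{v \in V \\ \norm{v} = 1}} \norm{Av}.
\end{equation*}
In particular $\sigma_1(A) = \max_{\norm{v}=1} \norm{Av}$ is the spectral norm of $A$. (One could equivalently apply Weyl's eigenvalue perturbation inequality to the Jordan--Wielandt symmetric dilation of the matrices involved; I would take the characterization above as the starting point.)

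For the first inequality, fix $j$ and let $V^\star$ be an $(n-j+1)$-dimensional subspace of $\R^n$ attaining the minimum in the characterization of $\sigma_j(Y)$, so that $\sigma_j(Y) = \max_{v \in V^\star,\, \norm{v}=1} \norm{Yv}$. For every unit vector $v$, the triangle inequality and the identification of $\sigma_1$ with the spectral norm give
\begin{equation*}
\norm{Xv} \le \norm{Yv} + \norm{(X-Y)v} \le \norm{Yv} + \sigma_1(X-Y).
\end{equation*}
Taking the maximum over unit vectors $v \in V^\star$ yields $\max_{v \in V^\star,\, \norm{v}=1} \norm{Xv} \le \sigma_j(Y) + \sigma_1(X-Y)$, and since $V^\star$ is one admissible subspace in the minimum defining $\sigma_j(X)$, we conclude $\sigma_j(X) \le \sigma_j(Y) + \sigma_1(X-Y)$. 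Exchanging the roles of $X$ and $Y$ and using $\sigma_1(Y-X) = \sigma_1(X-Y)$ gives the reverse bound, hence $|\sigma_j(X) - \sigma_j(Y)| \le \sigma_1(X-Y)$.

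The second inequality is immediate: writing $Z := X-Y$, one has $\norm{Z}^2 = \sum_{i=1}^{\min\{m,n\}} \sigma_i(Z)^2 \ge \sigma_1(Z)^2$, hence $\sigma_1(Z) \le \norm{Z}$. I do not expect any genuine obstacle here, since the argument is routine once the variational characterization is in hand; the only points requiring care are the bookkeeping of subspace dimensions and, in particular, the choice of the subspace optimal for $Y$ (rather than for $X$) when bounding $\sigma_j(X)$ from above — using the ``min over $(n-j+1)$-dimensional subspaces'' form of Courant--Fischer rather than the ``max over $j$-dimensional subspaces'' form is what makes the perturbation estimate go through cleanly.
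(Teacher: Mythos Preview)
Your argument is correct: the Courant--Fischer characterization yields the first inequality exactly as you wrote, and the second is the standard spectral--Frobenius bound. The paper itself does not prove this proposition but simply cites \cite[Corollary~8.6.2]{GolubVanLoan}, so there is nothing to compare against; your proof is essentially the textbook one.
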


Proposition~\ref{prop:UpperBoundFrobeniusNormProduct} is invoked in the proof of Proposition~\ref{prop:GlobalSecondOrderUpperBoundDistanceToRealDeterminantalVarietyFromTangentLine}.

\begin{proposition}[{\cite[Lemma~2.11]{Hackbusch}}]
\label{prop:UpperBoundFrobeniusNormProduct}
For all real matrices $X$ and $Y$ such that the number of columns of $X$ equals the number of rows of $Y$,
\begin{equation*}
\norm{XY} \le \norm{X}_2 \norm{Y}.
\end{equation*}
\end{proposition}

By reducing the rank of $X \in \R^{m \times n} \setminus \{0_{m \times n}\}$, we mean computing an element of $\proj{\R_{\le \ushort{r}}^{m \times n}}{X}$ for some nonnegative integer $\ushort{r} < \rank X$, with the convention $\R_{\le 0}^{m \times n} \coloneq \R_0^{m \times n} \coloneq \{0_{m \times n}\}$. The Eckart--Young theorem implies that for every nonnegative integer $\ushort{r} < \min\{m, n\}$ and $X \in \R^{m \times n}$:
\begin{enumerate}
\item if $\rank X \le \ushort{r}$, then $\proj{\R_{\le \ushort{r}}^{m \times n}}{X} = \{X\}$;
\item if $\rank X > \ushort{r}$, then $\dist(X, \R_{\le \ushort{r}}^{m \times n}) = \dist(X, \R_{\ushort{r}}^{m \times n}) = \sqrt{\sum_{i=\ushort{r}+1}^{\rank X} \sigma_i^2(X)}$ and $\proj{\R_{\le \ushort{r}}^{m \times n}}{X} = \proj{\R_{\ushort{r}}^{m \times n}}{X}$.
\end{enumerate}

Proposition~\ref{prop:LocalDeltaRank} is invoked in the proof of Lemma~\ref{lemma:P2GDRmapPolak}.

\begin{proposition}
\label{prop:LocalDeltaRank}
Let $\ushort{X} \in \R^{m \times n}$ and $\ushort{r} \coloneq \rank \ushort{X}$. For all $\Delta \in (0, \infty)$ and $X \in \ball[\ushort{X}, \Delta]$, it holds that $\rank_\Delta X \le \ushort{r}$. If $\ushort{X} \ne 0_{m \times n}$, then, for all $X \in \ball(\ushort{X}, \sigma_{\ushort{r}}(\ushort{X}))$, it holds that $\ushort{r} \le \rank X$.
\end{proposition}

\begin{proof}
The first inequality follows from Proposition~\ref{prop:SingularValuesLipschitz}:
\begin{equation*}
\sigma_{\ushort{r}+1}(X)
= |\sigma_{\ushort{r}+1}(X)-\sigma_{\ushort{r}+1}(\ushort{X})|
\le \norm{X-\ushort{X}}
\le \Delta.
\end{equation*}
The second inequality holds because $\ball(\ushort{X}, \sigma_{\ushort{r}}(\ushort{X})) \subseteq \R^{m \times n} \setminus \R_{< \ushort{r}}^{m \times n}$ since $\sigma_{\ushort{r}}(\ushort{X}) = \dist(\ushort{X}, \R_{< \ushort{r}}^{m \times n})$.
\end{proof}

\subsection{Outer limit and outer semicontinuity of a correspondence}
\label{subsec:OuterLimitOuterContinuityCorrespondence}
The material reviewed in this section is used in Sections~\ref{subsec:TangentNormalCones} and \ref{subsec:StationarityMeasuresOfStationarity}.

Let $(S_i)_{i \in \N}$ be a sequence of nonempty subsets of $\mathcal{E}$. The \emph{outer limit} of $(S_i)_{i \in \N}$, denoted by $\outlim_{i \to \infty} S_i$, is the set of all possible accumulation points of sequences $(x_i)_{i \in \N}$ such that $x_i \in S_i$ for all $i \in \N$ \cite[\S 4A]{RockafellarWets}.

A \emph{correspondence}, or a \emph{set-valued map}, is a triplet $F \coloneq (A_1, A_2, G)$ where $A_1$ and $A_2$ are sets respectively called the \emph{set of departure} and the \emph{set of destination} of $F$, and $G$ is a subset of $A_1 \times A_2$ called the \emph{graph} of $F$. If $F \coloneq (A_1, A_2, G)$ is a correspondence, written $F : A_1 \setmapsto A_2$, then the \emph{image} of $x \in A_1$ under $F$ is $F(x) \coloneq \{y \in A_2 \mid (x, y) \in G\}$ and the \emph{domain} of $F$ is $\dom F \coloneq \{x \in A_1 \mid F(x) \ne \emptyset\}$.

Let $\mathcal{E}_1$ and $\mathcal{E}_2$ be two Euclidean vector spaces, $F : \mathcal{E}_1 \setmapsto \mathcal{E}_2$ be a correspondence, $S$ be a nonempty subset of $\dom F$, and $\ushort{x} \in S$.
The \emph{outer limit} of $F$ at $\ushort{x}$ relative to $S$, denoted by $\outlim_{S \ni x \to \ushort{x}} F(x)$, is the set of all $y \in \mathcal{E}_2$ such that there exist sequences $(x_i)_{i \in \N}$ in $S$ converging to $\ushort{x}$ and $(y_i)_{i \in \N}$ in $\mathcal{E}_2$ converging to $y$ such that $y_i \in F(x_i)$ for all $i \in \N$ \cite[5(1)]{RockafellarWets}.
The correspondence $F$ is said to be \emph{outer semicontinuous} at $\ushort{x}$ relative to $S$ if $\outlim_{S \ni x \to \ushort{x}} F(x) \subseteq F(\ushort{x})$ or, equivalently, $\outlim_{S \ni x \to \ushort{x}} F(x) = F(\ushort{x})$ \cite[Definition~5.4]{RockafellarWets}.

Proposition~\ref{prop:DistanceToSetLowerSemicontinuous} enables to prove Proposition~\ref{prop:LowerSemicontinuityMeasureMstationarity}.

\begin{proposition}
\label{prop:DistanceToSetLowerSemicontinuous}
Let $g : \mathcal{E} \to \mathcal{E}$ be continuous, $F : \mathcal{E} \setmapsto \mathcal{E}$ be closed-valued, and $S$ be a nonempty subset of $\dom F$. If $F$ is outer semicontinuous at $\ushort{x} \in S$ relative to $S$, then the function
\begin{equation*}
\dom F \to \R : x \mapsto \dist(g(x), F(x))
\end{equation*}
is lower semicontinuous at $\ushort{x}$ relative to $S$.
\end{proposition}

\begin{proof}
For all $x \in \dom F$, $\dist(g(x), F(x)) = \dist(0, F(x)-g(x))$. Let $F$ be outer semicontinuous at $\ushort{x} \in S$ relative to $S$. By \cite[Exercise~5.24]{RockafellarWets}, since $-g$ is continuous, and thus locally bounded, $F-g$ is outer semicontinuous at $\ushort{x}$ relative to $S$. Thus, by \cite[Proposition~5.11(a)]{RockafellarWets}, the function
\begin{equation*}
\dom F \to \R : x \mapsto \dist(0, F(x)-g(x))
\end{equation*}
is lower semicontinuous at $\ushort{x}$ relative to $S$.
\end{proof}

\subsection{Tangent and normal cones}
\label{subsec:TangentNormalCones}
Based on \cite[Chapter~6]{RockafellarWets}, this section reviews the concept of tangent cone and the two notions of normality on which the two notions of stationarity given in Definition~\ref{def:M/B-Stationarity} rely.
Let $S$ be a nonempty subset of $\mathcal{E}$.

A vector $v \in \mathcal{E}$ is said to be \emph{tangent} to $S$ at $x \in S$ if there exist sequences $(x_i)_{i \in \N}$ in $S$ converging to $x$ and $(t_i)_{i \in \N}$ in $(0, \infty)$ such that the sequence $(\frac{x_i-x}{t_i})_{i \in \N}$ converges to $v$ \cite[Definition~6.1]{RockafellarWets}. The set of all tangent vectors to $S$ at $x \in S$ is a closed cone \cite[Proposition~6.2]{RockafellarWets} called the \emph{(Bouligand) tangent cone} to $S$ at $x$ and denoted by $\tancone{S}{x}$.
The \emph{regular normal cone} to $S$ at $x \in S$ is the polar
\begin{equation*}
\regnorcone{S}{x} \coloneq \tancone{S}{x}^*,
\end{equation*}
and the \emph{normal cone} to $S$ at $x$ is the outer limit
\begin{equation*}
\norcone{S}{x} \coloneq \outlim_{S \ni z \to x} \regnorcone{S}{z},
\end{equation*}
which is a closed cone \cite[Definition~6.3 and Proposition~6.5]{RockafellarWets}.
For every $x \in S$,
\begin{equation}
\label{eq:NestedNormalCones}
\regnorcone{S}{x} \subseteq \norcone{S}{x},
\end{equation}
and $S$ is said to be \emph{Clarke regular} at $x$ if this inclusion is an equality and $S$ is locally closed at $x$ \cite[Definition~6.4]{RockafellarWets}.

If $S$ is an embedded submanifold of $\mathcal{E}$ around $x \in S$ \cite[\S 3.3.1]{AbsilMahonySepulchre}, then $S$ is Clarke regular at $x$, $\tancone{S}{x}$ and $\norcone{S}{x}$ coincide respectively with the tangent and normal spaces to $S$ at $x$ \cite[Example~6.8]{RockafellarWets}, which are the orthogonal complements of each other, and, for every differentiable function $\varphi : \mathcal{E} \to \R$, the Riemannian gradient of $\varphi|_S$ at $x$ equals $\proj{\tancone{S}{x}}{\nabla\varphi(x)}$ \cite[\S 3.6.1]{AbsilMahonySepulchre}. (The set $C$ in \cite[Example~6.8]{RockafellarWets} is an embedded submanifold of $\R^n$ locally around $\bar{x}$, as can be seen by applying the submersion theorem \cite[Proposition~3.3.3]{AbsilMahonySepulchre} with $\mathcal{M}_1 \coloneq O$, $d_1 \coloneq n$, $\mathcal{M}_2 \coloneq \R^m$, $d_2 \coloneq m$, and $y \coloneq 0$.)

For every $X \in \R_{\le r}^{m \times n}$, the closed cones $\tancone{\R_{\le r}^{m \times n}}{X}$, $\restancone{\R_{\le r}^{m \times n}}{X}$, $\regnorcone{\R_{\le r}^{m \times n}}{X}$, and $\norcone{\R_{\le r}^{m \times n}}{X}$ can be described explicitly based on orthonormal bases of $\im X$, $\im X^\tp$, and their orthogonal complements. Let $\ushort{r} \coloneq \rank X$, $U \in \st(\ushort{r}, m)$, $U_\perp \in \st(m-\ushort{r}, m)$, $V \in \st(\ushort{r}, n)$, $V_\perp \in \st(n-\ushort{r}, n)$, $\im U = \im X$, $\im U_\perp = (\im X)^\perp$, $\im V = \im X^\tp$, and $\im V_\perp = (\im X^\tp)^\perp$.
Then:
\begin{itemize}
\item by \cite[Theorem~3.2]{SchneiderUschmajew2015} (see also \cite[Theorem~4.2]{OlikierMlinaricAbsilUschmajew}) and \cite[Definition~3.1]{OlikierAbsil2023},
\begin{align}
\label{eq:TanConeDeterminantalVariety}
\tancone{\R_{\le r}^{m \times n}}{X}
&= [U \; U_\perp] \begin{bmatrix} \R^{\ushort{r} \times \ushort{r}} & \R^{\ushort{r} \times n-\ushort{r}} \\ \R^{m-\ushort{r} \times \ushort{r}} & \R_{\le r-\ushort{r}}^{m-\ushort{r} \times n-\ushort{r}} \end{bmatrix} [V \; V_\perp]^\tp,\\
\nonumber
\restancone{\R_{\le r}^{m \times n}}{X}
&= \left\{[U \; U_\perp] \begin{bmatrix} A_{1, 1} & A_{1, 2} \\ A_{2, 1} & A_{2, 2} \end{bmatrix} [V \; V_\perp]^\tp ~\left|~\begin{array}{l} A_{1, 1} \in \R^{\ushort{r} \times \ushort{r}},\, A_{1, 2} \in \R^{\ushort{r} \times n-\ushort{r}},\\ A_{2, 1} \in \R^{m-\ushort{r} \times \ushort{r}},\, A_{2, 2} \in \R_{\le r-\ushort{r}}^{m-\ushort{r} \times n-\ushort{r}}, \\ A_{1, 2} = 0_{\ushort{r} \times n-\ushort{r}} \text{ or } A_{2, 1} = 0_{m-\ushort{r} \times \ushort{r}} \end{array}\right.\right\}
\end{align}
and, by \cite[Corollary~3.3]{SchneiderUschmajew2015} and \cite[Proposition~3.2]{OlikierAbsil2023}, for every $Z \in \R^{m \times n}$ written as
\begin{equation*}
Z = [U \; U_\perp] \begin{bmatrix} A_{1, 1} & A_{1, 2} \\ A_{2, 1} & A_{2, 2} \end{bmatrix} [V \; V_\perp]^\tp
\end{equation*}
with $A_{1, 1} = U^\tp Z V$, $A_{1, 2} = U^\tp Z V_\perp$, $A_{2, 1} = U_\perp^\tp Z V$, and $A_{2, 2} = U_\perp^\tp Z V_\perp$,
\begin{align}
\label{eq:ProjTanConeDeterminantalVariety}
\proj{\tancone{\R_{\le r}^{m \times n}}{X}}{Z}
&= [U \; U_\perp] \begin{bmatrix} A_{1, 1} & A_{1, 2} \\ A_{2, 1} & \proj{\R_{\le r-\ushort{r}}^{m-\ushort{r} \times n-\ushort{r}}}{A_{2, 2}} \end{bmatrix} [V \; V_\perp]^\tp,\\
\label{eq:NormProjTanConeDeterminantalVariety}
\norm{\proj{\tancone{\R_{\le r}^{m \times n}}{X}}{Z}}
&\ge \sqrt{\frac{r-\ushort{r}}{\min\{m,n\}-\ushort{r}}} \norm{Z},\\
\label{eq:ProjResTanConeDeterminantalVariety}
\proj{\restancone{\R_{\le r}^{m \times n}}{X}}{Z}
&= \left\{\begin{array}{ll}
\begin{array}{l}
[U \; U_\perp] \begin{bmatrix} A_{1, 1} & A_{1, 2} \\ 0_{m-\ushort{r} \times \ushort{r}} & \proj{\R_{\le r-\ushort{r}}^{m-\ushort{r} \times n-\ushort{r}}}{A_{2, 2}} \end{bmatrix} [V \; V_\perp]^\tp
\end{array} & \text{if } \norm{A_{1, 2}} > \norm{A_{2, 1}},\\[5mm]
\begin{array}{l}
[U \; U_\perp] \begin{bmatrix} A_{1, 1} & A_{1, 2} \\ 0_{m-\ushort{r} \times \ushort{r}} & \proj{\R_{\le r-\ushort{r}}^{m-\ushort{r} \times n-\ushort{r}}}{A_{2, 2}} \end{bmatrix} [V \; V_\perp]^\tp\\[5mm]
\cup~[U \; U_\perp] \begin{bmatrix} A_{1, 1} & 0_{\ushort{r} \times n-\ushort{r}} \\ A_{2, 1} & \proj{\R_{\le r-\ushort{r}}^{m-\ushort{r} \times n-\ushort{r}}}{A_{2, 2}} \end{bmatrix} [V \; V_\perp]^\tp
\end{array} & \text{if } \norm{A_{1, 2}} = \norm{A_{2, 1}},\\[13mm]
\begin{array}{l}
[U \; U_\perp] \begin{bmatrix} A_{1, 1} & 0_{\ushort{r} \times n-\ushort{r}} \\ A_{2, 1} & \proj{\R_{\le r-\ushort{r}}^{m-\ushort{r} \times n-\ushort{r}}}{A_{2, 2}} \end{bmatrix} [V \; V_\perp]^\tp
\end{array} & \text{if } \norm{A_{1, 2}} < \norm{A_{2, 1}};
\end{array}\right.
\end{align}

\item by \cite[Theorem~3.1]{HosseiniLukeUschmajew2019},
\begin{equation}
\label{eq:NorConeDeterminantalVariety}
\norcone{\R_{\le r}^{m \times n}}{X} = U_\perp \R_{\le \min\{m, n\}-r}^{m-\ushort{r} \times n-\ushort{r}} V_\perp^\tp;
\end{equation}

\item by \cite[Corollary~2.3]{HosseiniLukeUschmajew2019}, if $\ushort{r} < r$, then
\begin{equation}
\label{eq:RegNorConeDeterminantalVariety}
\regnorcone{\R_{\le r}^{m \times n}}{X} = \{0_{m \times n}\}.
\end{equation}
\end{itemize}
As pointed out in Section~\ref{sec:Introduction}, some of these equations are known in algebraic geometry since the last century. To exploit \eqref{eq:TanConeDeterminantalVariety} and \eqref{eq:ProjTanConeDeterminantalVariety} efficiently, note the following.
\begin{enumerate}
\item In practice, the projection onto $\tancone{\R_{\le r}^{m \times n}}{X}$ can be computed by \cite[Algorithm~2]{SchneiderUschmajew2015}. This does not rely on $U_\perp$ and $V_\perp$, which are huge in the frequently encountered case where $r \ll \min\{m,n\}$.

\item As pointed out in Section~\ref{subsec:StateOfTheArt}, for all $X \in \R_{\le r}^{m \times n}$ and $Z \in \tancone{\R_{\le r}^{m \times n}}{X}$, $X+Z \in \R_{\le 2r}^{m \times n}$.
(In what follows, the $\diag$ operator returns a block diagonal matrix \cite[\S 1.3.1]{GolubVanLoan} whose diagonal blocks are the arguments given to the operator.)
Indeed, if
\begin{equation*}
X = [U \; U_\perp] \diag(\Sigma, 0_{m-\ushort{r} \times n-\ushort{r}}) [V \; V_\perp]^\tp
\end{equation*}
is an SVD and $Z \in \tancone{\R_{\le r}^{m \times n}}{X}$ is written as
\begin{equation*}
Z = [U \; U_\perp] \begin{bmatrix} A_{1, 1} & A_{1, 2} \\ A_{2, 1} & A_{2, 2} \end{bmatrix} [V \; V_\perp]^\tp
\end{equation*}
with $A_{1, 1} \in \R^{\ushort{r} \times \ushort{r}}$, $A_{1, 2} \in \R^{\ushort{r} \times n-\ushort{r}}$, $A_{2, 1} \in \R^{m-\ushort{r} \times \ushort{r}}$, and $A_{2, 2} \in \R_{\le r-\ushort{r}}^{m-\ushort{r} \times n-\ushort{r}}$, then, by \cite[Proposition~3.1]{OlikierAbsil2022},
\begin{equation*}
X+Z = [U \; U_\perp] \begin{bmatrix} \Sigma+A_{1, 1} & A_{1, 2} \\ A_{2, 1} & A_{2, 2} \end{bmatrix} [V \; V_\perp]^\tp \in \R_{\le r+\ushort{r}}^{m \times n} \subseteq \R_{\le 2r}^{m \times n}.
\end{equation*}
This is exploited in the analysis of the computational cost per iteration of $\ppgd$ and $\ppgdr$ conducted in \cite[\S 7]{OlikierAbsil2023}.
\end{enumerate}

\subsection{Stationarity and measures of stationarity}
\label{subsec:StationarityMeasuresOfStationarity}
This section completes Section~\ref{subsec:StationarityNotionsDeterminantalVariety} by establishing several results mentioned therein.

For a set that is not Clarke regular, M-stationarity is weaker than B-stationarity. For problem~\eqref{eq:OptiDeterminantalVariety}, this is quantified in Remark~\ref{rem:MstationarityDeterminantalVariety}.

\begin{remark}
\label{rem:MstationarityDeterminantalVariety}
The formulas \eqref{eq:NorConeDeterminantalVariety} and \eqref{eq:RegNorConeDeterminantalVariety} show that, for problem~\eqref{eq:OptiDeterminantalVariety}, M-stationarity is weaker than B-stationarity at every $X \in \R_{\ushort{r}}^{m \times n}$ with $\ushort{r} \in \{0, \dots, r-1\}$. Indeed, while the latter amounts to $\nabla f(X) = 0_{m \times n}$, the former only amounts to the B-stationarity of $X$ for the problem of minimizing $f$ on $\R_{\le \ushort{r}}^{m \times n}$ together with the inequality $\rank \nabla f(X) \le \min\{m,n\}-r$.
\end{remark}

Proposition~\ref{prop:MstationarityRestriction} states that, for problem~\eqref{eq:OptiDeterminantalVariety}, the M-stationarity of a point depends on $f$ only through its restriction $f|_{\R_{\le r}^{m \times n}}$. It is based on Lemma~\ref{lemma:ZeroOnDeterminantalVarietyImpliesZeroGradientOnSingularPart}.

\begin{lemma}
\label{lemma:ZeroOnDeterminantalVarietyImpliesZeroGradientOnSingularPart}
For every differentiable function $g : \R^{m \times n} \to \R$ such that $g(\R_{\le r}^{m \times n}) = \{0\}$, it holds that $\nabla g(\R_{< r}^{m \times n}) = \{0_{m \times n}\}$.
\end{lemma}

\begin{proof}
Let $g : \R^{m \times n} \to \R$ be differentiable and such that $g(\R_{\le r}^{m \times n}) = \{0\}$. Let $X \in \R_{< r}^{m \times n}$.
For every $(i, j) \in \{1, \dots, m\} \times \{1, \dots, n\}$, let $E_{i, j} \coloneq [\delta_{i, k} \delta_{j, l}]_{k, l = 1}^{m, n}$ and $\partial_{i, j} g(X) \coloneq \lim_{t \to 0} \frac{g(X+tE_{i, j})-g(X)}{t}$. The numerator in the limit is zero since, for every $t \in \R$, $\rank(X+tE_{i, j}) \le \rank X + \rank tE_{i, j} \le r$. The result follows since $\nabla g(X) = [\partial_{i, j} g(X)]_{i, j = 1}^{m, n}$.
\end{proof}

\begin{proposition}
\label{prop:MstationarityRestriction}
Let $h : \R^{m \times n} \to \R$ be differentiable and such that $h|_{\R_{\le r}^{m \times n}} = f|_{\R_{\le r}^{m \times n}}$. Then, $X \in \R_{\le r}^{m \times n}$ is M-stationary for the minimization of $h|_{\R_{\le r}^{m \times n}}$ if and only if $X$ is M-stationary for~\eqref{eq:OptiDeterminantalVariety}.
\end{proposition}

\begin{proof}
Let $X \in \R_{\le r}^{m \times n}$.
If $X \in \R_r^{m \times n}$, the result follows from the equality $\norcone{\R_{\le r}^{m \times n}}{X} = \regnorcone{\R_{\le r}^{m \times n}}{X}$ and the fact that the B-stationarity of $X$ depends on $f$ only through $f|_{\R_{\le r}^{m \times n}}$ \cite[Appendix~A]{LevinKileelBoumal2023}.
If $X \in \R_{< r}^{m \times n}$, Lemma~\ref{lemma:ZeroOnDeterminantalVarietyImpliesZeroGradientOnSingularPart} implies that $\nabla h(X) = \nabla f(X)$, and the result follows.
\end{proof}

The rest of this section concerns the measures of stationarity in~\eqref{eq:NormProjectionNegativeGradientOntoTangentCone}--\eqref{eq:DistanceNegativeGradientToNormalCone}.
Let $C$ be a nonempty closed subset of $\mathcal{E}$.

By Proposition~\ref{prop:ProjectionOntoClosedCone}, for every differentiable function $\varphi : \mathcal{E} \to \R$ and every $x \in C$,
\begin{equation}
\label{eq:StationarityMeasureExplicitFormula}
\norm{\proj{\tancone{C}{x}}{-\nabla \varphi(x)}}
= \sqrt{\norm{\nabla \varphi(x)}^2 - \dist(-\nabla \varphi(x), \tancone{C}{x})^2}
\eqcolon \s(x; \varphi, C).
\end{equation}
The stationarity measure $\s(\cdot; \varphi, C)$ is used in \cite{SchneiderUschmajew2015} to define and analyze $\ppgd$ and $\rfd$, and in this paper to analyze $\ppgdr$. It possesses the following desirable properties:
\begin{enumerate}
\item the map that associates with every $x \in C$ the set $\proj{\tancone{C}{x}}{-\nabla \varphi(x)}$ depends on $\varphi$ only through its restriction $\varphi|_C$ \cite[Lemmas~A.7 and A.8]{LevinKileelBoumal2023};
\item for every $x \in C$, $-\nabla \varphi(x) \in \regnorcone{C}{x}$ if and only if $\s(x; \varphi, C) = 0$ (see \cite[\S 2.1]{SchneiderUschmajew2015} or \cite[Proposition~2.5]{LevinKileelBoumal2023});
\item for every $x \in C$, $\s(x; \varphi, C)$ is the best linear rate of decrease that can achieved in $\varphi$ by moving away from $x$ along a direction in $\tancone{C}{x}$ \cite[Theorem~A.9(b) in \href{https://arxiv.org/abs/2107.03877v3}{arXiv:2107.03877v3}]{LevinKileelBoumal2023}.
\end{enumerate}

Proposition~\ref{prop:NotClarkeRegularImpliesApocalyptic} implies that the measures of B-stationarity in~\eqref{eq:NormProjectionNegativeGradientOntoTangentCone} and \eqref{eq:DistanceNegativeGradientToRegularNormalCone} can fail to be lower semicontinuous at every point where Clarke regularity is absent.

\begin{proposition}
\label{prop:NotClarkeRegularImpliesApocalyptic}
For every $x \in C$ where $C$ is not Clarke regular, there exist a sequence $(x_i)_{i \in \N}$ in $C$ converging to $x$ and a continuously differentiable function $\varphi : \mathcal{E} \to \R$ such that the measures of B-stationarity $\s(\cdot; \varphi, C)$ and
\begin{equation*}
C \to \R : x \mapsto \dist(-\nabla \varphi(x), \regnorcone{C}{x})
\end{equation*}
converge to zero along $(x_i)_{i \in \N}$, yet $-\nabla \varphi(x) \in \norcone{C}{x} \setminus \regnorcone{C}{x}$.
\footnote{We are grateful to an anonymous reviewer of \href{https://arxiv.org/abs/2303.16040v1}{arXiv:2303.16040v1} for pointing this out.}
\end{proposition}

\begin{proof}
Assume that $C$ is not Clarke regular at $x \in C$. Let $v \in \norcone{C}{x} \setminus \regnorcone{C}{x}$ and $\varphi : \mathcal{E} \to \R : z \mapsto -\ip{z}{v}$. Then, $\varphi$ is continuously differentiable and $-\nabla \varphi(z) = v$ for all $z \in \mathcal{E}$. In particular, $\s(x; \varphi, C) \ne 0$. Furthermore, by definition of $\norcone{C}{x}$, there exist sequences $(x_i)_{i \in \N}$ in $C$ converging to $x$ and $(v_i)_{i \in \N}$ converging to $v$ such that $v_i \in \regnorcone{C}{x_i}$ for all $i \in \N$. Therefore, for all $i \in \N$,
\begin{equation*}
\s(x_i; \varphi, C)
\le \dist(-\nabla \varphi(x_i), \regnorcone{C}{x_i})
\le \norm{v-v_i}
\to 0 \text{ when } i \to \infty,
\end{equation*}
where the first inequality follows from \cite[Theorem~A.9(c) in \href{https://arxiv.org/abs/2107.03877v3}{arXiv:2107.03877v3}]{LevinKileelBoumal2023}.
\end{proof}

The stationarity measure $\s(\cdot; \varphi, C)$ is used in \cite{LevinKileelBoumal2023} to define the concept of apocalypse. A point $x \in C$ is said to be \emph{apocalyptic} if there exist a sequence $(x_i)_{i \in \N}$ in $C$ converging to $x$ and a continuously differentiable function $\varphi : \mathcal{E} \to \R$ such that $\lim_{i \to \infty} \s(x_i; \varphi, C) = 0$ whereas $\s(x; \varphi, C) > 0$ \cite[Definition~2.7]{LevinKileelBoumal2023}. Such a triplet $(x, (x_i)_{i \in \N}, \varphi)$ is called an \emph{apocalypse}.

Corollary~\ref{coro:ApocalypticNotClarkeRegular} answers a question raised in \cite[\S 4]{LevinKileelBoumal2023}.

\begin{corollary}
\label{coro:ApocalypticNotClarkeRegular}
A point $x \in C$ is apocalyptic if and only if $C$ is not Clarke regular at $x$.
\end{corollary}

\begin{proof}
This follows from \cite[Corollary~2.15]{LevinKileelBoumal2023} and Proposition~\ref{prop:NotClarkeRegularImpliesApocalyptic}.
\end{proof}

On a set that is not Clarke regular, no measure of B-stationarity in the literature is known to be lower semicontinuous. In contrast, the measure of M-stationarity in~\eqref{eq:DistanceNegativeGradientToNormalCone} is lower semicontinuous.

\begin{proposition}
\label{prop:LowerSemicontinuityMeasureMstationarity}
For every continuously differentiable function $\varphi : \mathcal{E} \to \R$, the function
\begin{equation*}
C \to \R : x \mapsto \dist(-\nabla\varphi(x), \norcone{C}{x})
\end{equation*}
is lower semicontinuous.
\end{proposition}

\begin{proof}
This follows from Proposition~\ref{prop:DistanceToSetLowerSemicontinuous} since $N_C$ is outer semicontinuous relative to $C$ \cite[Proposition~6.6]{RockafellarWets}.
\end{proof}

The notion of serendipity is complementary to that of apocalypse. A point $x \in C$ is said to be \emph{serendipitous} if there exist a sequence $(x_i)_{i \in \N}$ in $C$ converging to $x$, a continuously differentiable function $\varphi : \mathcal{E} \to \R$, and $\varepsilon \in (0, \infty)$ such that $\s(x_i; \varphi, C) > \varepsilon$ for all $i \in \N$, yet $\s(x; \varphi, C) = 0$ \cite[Definition~2.8]{LevinKileelBoumal2023}.
Thus, if $x \in C$ is not serendipitous and $\s(x; \varphi, C) = 0$ for some continuously differentiable function $\varphi : \mathcal{E} \to \R$, then $\lim_{i \to \infty} \s(x_i; \varphi, C) = 0$ for every sequence $(x_i)_{i \in \N}$ in $C$ converging to $x$.
A point $x \in C$ is serendipitous if and only if there exists a sequence $(x_i)_{i \in \N}$ in $C$ converging to $x$ such that $\regnorcone{C}{x} \not\subseteq \left(\outlim_{i \to \infty} \tancone{C}{x_i}\right)^*$ \cite[Theorem~2.17]{LevinKileelBoumal2023}.
Proposition~\ref{prop:DeterminantalVarietyNoSerendipitousPoint} enables to deduce Corollary~\ref{coro:P2GDRPolak} from Theorem~\ref{thm:P2GDRPolak}.

\begin{proposition}
\label{prop:DeterminantalVarietyNoSerendipitousPoint}
No point of $\R_{\le r}^{m \times n}$ is serendipitous.
\end{proposition}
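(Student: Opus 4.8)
The plan is to assume for contradiction that some $X \in \R_{\le r}^{m \times n}$ is serendipitous, and to split the argument according to whether $\rank X = r$ or $\rank X < r$. So suppose there are a continuously differentiable $\phi : \R^{m \times n} \to \R$, a number $\varepsilon \in (0,\infty)$, and a sequence $(X_i)_{i \in \N}$ in $\R_{\le r}^{m \times n}$ converging to $X$ with $\s_\phi(X_i) > \varepsilon$ for every $i \in \N$, yet $\s_\phi(X) = 0$. The first step is the elementary bound $\s_\phi(Y) \le \norm{\nabla \phi(Y)}$ valid for every $Y \in \R_{\le r}^{m \times n}$, which follows from \eqref{eq:StationarityMeasureExplicitFormula}; applying it to $Y = X_i$ gives $\norm{\nabla \phi(X_i)} > \varepsilon$ for all $i$, and then, letting $i \to \infty$ and using that $\nabla \phi$ is continuous, $\norm{\nabla \phi(X)} \ge \varepsilon > 0$.

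Next I would handle the case $\rank X = r$. Here $\sigma_r(X) = \sigma_{\min}(X) > 0$, so by Proposition~\ref{prop:SingularValuesLipschitz} we have $\sigma_r(X_i) \to \sigma_r(X) > 0$, whence $\rank X_i = r$ for all $i$ large; in other words, $(X_i)_{i \in \N}$ lies eventually in $\R_r^{m \times n}$ and converges to $X \in \R_r^{m \times n}$. Corollary~\ref{coro:ContinuityStationarityMeasureLowRankOptiConstantRank} (with $\ushort{r} = r$) then gives $\s_\phi(X) = \lim_{i \to \infty} \s_\phi(X_i) \ge \varepsilon > 0$, contradicting $\s_\phi(X) = 0$.

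It remains to handle the case $\rank X < r$, where \eqref{eq:StationarityMeasureGradientNorm} is the right tool. If $1 \le \rank X < r$, then applying \eqref{eq:StationarityMeasureGradientNorm} at $X$ with $\ushort{r} = \rank X$ yields
\begin{equation*}
\s_\phi(X) \ge \sqrt{\frac{r - \rank X}{\min\{m,n\} - \rank X}} \, \norm{\nabla \phi(X)} > 0,
\end{equation*}
since the coefficient is positive (as $0 < \rank X < r < \min\{m,n\}$) and $\norm{\nabla \phi(X)} \ge \varepsilon > 0$, again contradicting $\s_\phi(X) = 0$. If $\rank X = 0$, i.e., $X = 0_{m \times n}$, then from the definition in Proposition~\ref{prop:TangentCone} one checks that $\tancone{\R_{\le r}^{m \times n}}{0_{m \times n}} = \R_{\le r}^{m \times n}$ (since $\R_{\le r}^{m \times n}$ is a closed cone); as $r \ge 1$, this cone contains the best rank-one approximation $\hat{G}$ of $G := -\nabla \phi(0_{m \times n})$, which satisfies $\norm{G - \hat{G}} = \sqrt{\norm{G}^2 - \sigma_{\max}(G)^2} < \norm{G}$ because $G \ne 0$, so $d(G, \tancone{\R_{\le r}^{m \times n}}{0_{m \times n}}) < \norm{G}$, and Proposition~\ref{prop:NormProjectionOntoClosedCone} gives $\s_\phi(0_{m \times n}) > 0$, once more a contradiction. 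This exhausts all cases.

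I expect the main difficulty to be recognizing that the two available tools are each adequate only on part of the variety and must be combined. Corollary~\ref{coro:ContinuityStationarityMeasureLowRankOptiConstantRank} lets one pass to the limit in $\s_\phi(X_i) \to \s_\phi(X)$ only when the approximating sequence can be kept within the fixed-rank manifold through $X$; this is automatic when $\rank X = r$, because $\R_r^{m \times n}$ is then a relative neighborhood of $X$ in $\R_{\le r}^{m \times n}$, but it is exactly what goes wrong for a rank-deficient $X$ approached by iterates of strictly larger rank, where $\tancone{\R_{\le r}^{m \times n}}{\cdot}$ is genuinely discontinuous. Conversely, the lower bound \eqref{eq:StationarityMeasureGradientNorm} carries no information when $\rank X = r$. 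The key point is that below the top rank the crude inequality \eqref{eq:StationarityMeasureGradientNorm}, together with the continuity of $\nabla \phi$, already forces $\s_\phi(X) > 0$ whenever $\norm{\nabla \phi(X)} > 0$, so that the dichotomy $\rank X = r$ versus $\rank X < r$ closes the argument; the degenerate sub-case $X = 0_{m \times n}$ is a minor separate check.
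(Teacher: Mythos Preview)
Your argument is correct and entirely self-contained, but it follows a different route from the paper's proof. The paper does not argue directly on $\s_\phi$; instead it invokes \cite[Theorem~2.17]{LevinKileelBoumal2021}, which reduces non-serendipity at $X$ to a polar-cone inclusion $\big(\tancone{\R_{\le r}^{m \times n}}{X}\big)^- \subseteq \big(\outlim_{i \to \infty} \tancone{\R_{\le r}^{m \times n}}{X_i}\big)^-$, and then checks this inclusion using external facts about outer limits of tangent cones (for $\rank X = r$) and the explicit polar $\big(\tancone{\R_{\le r}^{m \times n}}{X}\big)^- = \{0_{m \times n}\}$ (for $\rank X < r$). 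Your proof avoids these outside references and relies only on tools already developed in the paper: the continuity result of Corollary~\ref{coro:ContinuityStationarityMeasureLowRankOptiConstantRank} handles the top-rank case, while the lower bound \eqref{eq:StationarityMeasureGradientNorm} together with the continuity of $\nabla\phi$ handles the rank-deficient case (with the small separate check at $0_{m \times n}$). The paper's approach is more structural and would transfer to any set whose tangent-cone correspondence satisfies the relevant polar inclusion, whereas your approach is more elementary and, incidentally, aligns with the alternative argument the authors themselves sketch for Proposition~\ref{prop:P2GDRankReductionPolak} in the second concluding remark of Section~\ref{sec:Conclusion}.
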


\begin{proof}
We use \cite[Theorem~2.17]{LevinKileelBoumal2023}. Let $X \in \R_{\le r}^{m \times n}$. Let us prove that $X$ is not serendipitous. Let $(X_i)_{i \in \N}$ be a sequence in $\R_{\le r}^{m \times n}$ converging to $X$.
If $\rank X = r$, then $\outlim_{i \to \infty} \tancone{\R_{\le r}^{m \times n}}{X_i} = \tancone{\R_{\le r}^{m \times n}}{X}$ \cite[Proposition~4.3]{OlikierAbsil2022}, and thus $\big(\outlim_{i \to \infty} \tancone{\R_{\le r}^{m \times n}}{X_i}\big)^* = \regnorcone{\R_{\le r}^{m \times n}}{X}$.
If $\rank X < r$, then
\begin{equation*}
\regnorcone{\R_{\le r}^{m \times n}}{X}
= \{0_{m \times n}\}
\subseteq \Big(\outlim_{i \to \infty} \tancone{\R_{\le r}^{m \times n}}{X_i}\Big)^*,
\end{equation*}
where the equality follows from \eqref{eq:RegNorConeDeterminantalVariety}.
\end{proof}

Proposition~\ref{prop:DeterminantalVarietyNoSerendipitousPoint} ensures that, for every sequence $(X_i)_{i \in \N}$ in $\R_{\le r}^{m \times n}$ that has at least one accumulation point that is B-stationary for~\eqref{eq:OptiDeterminantalVariety} and for every $\varepsilon \in (0, \infty)$, there exists $i \in \N$ such that $\s(X_i; f, \R_{\le r}^{m \times n}) \le \varepsilon$. This provides the following stopping criterion for every method that generates such a sequence: given $\varepsilon \in (0, \infty)$, stop the method at iteration
\begin{equation*}
i_\varepsilon \coloneq \min\{i \in \N \mid \s(X_i; f, \R_{\le r}^{m \times n}) \le \varepsilon\}.
\end{equation*}

\section{Sufficient-descent maps}
\label{sec:SufficientDescentMaps}
This section, based on \cite[\S 1.3]{Polak1971} and \cite[\S 7.6]{LuenbergerYe}, provides the theoretical framework used to analyze $\ppgdr$ in Section~\ref{sec:P2GDRdeterminantalVariety} and $\ppgd$--$\pgd$ in Section~\ref{sec:P2GD--PGD}.
Given a Euclidean vector space~$\mathcal{E}$, a nonempty closed subset $C$ of $\mathcal{E}$, and a function $\varphi : \mathcal{E} \to \R$ that is continuous on $C$, the framework considers the problem
\begin{equation}
\label{eq:MinContFunctionClosedSet}
\min_{x \in C} \varphi(x)
\end{equation}
of minimizing $\varphi$ on $C$ and the model of an iterative method presented in Algorithm~\ref{algo:ModelIterativeOptimizationMethod}.

\begin{algorithm}[H]
\caption{Model of an iterative method aiming at solving problem~\eqref{eq:MinContFunctionClosedSet}}
\label{algo:ModelIterativeOptimizationMethod}
\begin{algorithmic}[1]
\Require
$(\mathcal{E}, C, S, \varphi, A)$, where $\mathcal{E}$ is a Euclidean vector space, $C$ is a nonempty closed subset of~$\mathcal{E}$, $S$ is a nonempty proper subset of~$C$, $\varphi : \mathcal{E} \to \R$ is continuous on $C$, and $A$ is a set-valued map from $C$ to $C$ such that $A(x) \ne \emptyset$ for all $x \in C$.
\Input
$x_0 \in C$.
\Output
a sequence in $C$.

\State
$i \gets 0$;
\While
{$x_i \notin S$}
\State
Choose $x_{i+1} \in A(x_i)$;
\State
$i \gets i+1$;
\EndWhile
\end{algorithmic}
\end{algorithm}

If $\varphi$ is differentiable on $C$, a possible $S$ is the set of B-stationary points of~\eqref{eq:MinContFunctionClosedSet}, $S \coloneq \{x \in C \mid -\nabla \varphi(x) \in \regnorcone{C}{x}\}$. If Algorithm~\ref{algo:ModelIterativeOptimizationMethod} generates a finite sequence, then the last element of the sequence is in $S$. The rest of this section focuses on the nontrivial case where it generates an infinite sequence. By Proposition~\ref{prop:SufficientDescentMapAccumulation}, the accumulation points of every infinite sequence generated by Algorithm~\ref{algo:ModelIterativeOptimizationMethod} lie in $S$ if the set-valued map $A$, called the \emph{iteration map} of Algorithm~\ref{algo:ModelIterativeOptimizationMethod}, is a sufficient-descent map in the sense of Definition~\ref{def:SufficientDescentMap}. Moreover, by Proposition~\ref{prop:SufficientDescentMapFiniteUnion}, the finite union of sufficient-descent maps is a sufficient-descent map, an instrumental property to analyze hybrid methods such as $\ppgd$--$\pgd$.

\begin{definition}[sufficient-descent map]
\label{def:SufficientDescentMap}
A \emph{$(\varphi, C, S)$-sufficient-descent map} is a set-valued map $A$ from $C$ to $C$ that satisfies the following two conditions:
\begin{enumerate}
\item for all $x \in C$, $A(x) \ne \emptyset$;
\item for every $\ushort{x} \in C \setminus S$, there exist $\varepsilon(\ushort{x}), \delta(\ushort{x}) \in (0, \infty)$ such that, for all $x \in \ball[\ushort{x}, \varepsilon(\ushort{x})] \cap C$, it holds that $\varphi(x)-\sup\varphi(A(x)) \ge \delta(\ushort{x})$.
\end{enumerate}
If $\varphi$ is differentiable on $C$, a \emph{$(\varphi, C)$-sufficient-descent map} is a $(\varphi, C, S)$-sufficient-descent map with $S \coloneq \{x \in C \mid -\nabla \varphi(x) \in \regnorcone{C}{x}\}$.
\end{definition}

In other words, every $\ushort{x} \in C \setminus S$ has a neighborhood where the decrease in $\varphi$ produced by the map $A$ is bounded away from zero.
Examples of $(f, \R_{\le r}^{m \times n})$-sufficient-descent maps include the monotone $\pgd$ map (as can be deduced from \cite{OlikierWaldspurger}), the $\ppgdr$ map (Lemma~\ref{lemma:P2GDRmapPolak}), and the $\rfdr$ map \cite[Proposition~6.1]{OlikierAbsil2023}.

\begin{proposition}[accumulation points in a set]
\label{prop:SufficientDescentMapAccumulation}
Let $A$ be a $(\varphi, C, S)$-sufficient-descent map and $(x_i)_{i \in \N}$ be a sequence in $C \setminus S$ such that $x_{i+1} \in A(x_i)$ for all $i \in \N$. Then, all accumulation points of $(x_i)_{i \in \N}$ are in $S$.
\end{proposition}

\begin{proof}
Let $(x_{i_k})_{k \in \N}$ be a subsequence converging to $x \in C$. For the sake of contradiction, assume that $x \notin S$. Let $\varepsilon(x)$ and $\delta(x)$ be given by Definition~\ref{def:SufficientDescentMap}. There exists $K \in \N$ such that, for all integers $k \ge K$, $x_{i_k} \in \ball[x, \varepsilon(x)]$ and thus $\varphi(x_{i_k})-\varphi(x_{i_k+1}) \ge \delta(x)$. Thus, since $(\varphi(x_i))_{i \in \N}$ is decreasing, for all integers $k \ge K$, $\varphi(x_{i_k})-\varphi(x_{i_{k+1}}) \ge \delta(x)$. Since $\varphi$ is continuous, $(\varphi(x_{i_k}))_{k \in \N}$ converges to $\varphi(x)$. Therefore, letting $k$ tend to infinity in the last inequality yields a contradiction.
\end{proof}

Following \cite[Remark~14]{Polak1971}, which states that it is usually better to determine whether a method generates a sequence having at least one accumulation point by examining the method in the light of the specific problem to which one wishes to apply it, no condition ensuring the existence of a convergent subsequence is imposed in Proposition~\ref{prop:SufficientDescentMapAccumulation}. As a reminder, a sequence $(x_i)_{i \in \N}$ in $\mathcal{E}$ has at least one accumulation point if and only if $\liminf_{i \to \infty} \norm{x_i} < \infty$.

\begin{proposition}[finite union of sufficient-descent maps]
\label{prop:SufficientDescentMapFiniteUnion}
Let $s$ be a positive integer. For every $i \in \{1, \dots, s\}$, let $A_i$ be a $(\varphi, C, S)$-sufficient-descent map. Then, every map that associates a nonempty subset of $\bigcup_{i=1}^s A_i(x)$ with every $x \in C$ is a $(\varphi, C, S)$-sufficient-descent map.
\end{proposition}

\begin{proof}
Let $\ushort{x} \in C \setminus S$. For every $i \in \{1, \dots, s\}$, there exist $\varepsilon_i(\ushort{x}), \delta_i(\ushort{x}) \in (0, \infty)$ such that, for all $x \in \ball[\ushort{x}, \varepsilon_i(\ushort{x})] \cap C$, it holds that $\varphi(x)-\sup\varphi(A_i(x)) \ge \delta_i(\ushort{x})$. Define $\varepsilon(\ushort{x}) \coloneq \min_{i \in \{1, \dots, s\}} \varepsilon_i(\ushort{x})$ and $\delta(\ushort{x}) \coloneq \min_{i \in \{1, \dots, s\}} \delta_i(\ushort{x})$. Then, for all $x \in \ball[\ushort{x}, \varepsilon(\ushort{x})] \cap C$, we have $\varphi(x)-\sup\varphi(\bigcup_{i=1}^s A_i(x)) \ge \delta(\ushort{x})$.
\end{proof}

For example, Proposition~\ref{prop:SufficientDescentMapFiniteUnion} implies that every sequence $(X_i)_{i \in \N}$ in $\R_{\le r}^{m \times n}$ such that, for every $i \in \N$, $X_{i+1}$ is an image of $X_i$ under the $\rfdr$, monotone $\pgd$, or $\ppgdr$ map accumulates at B-stationary points of~\eqref{eq:OptiDeterminantalVariety}. This is exploited in Section~\ref{sec:P2GD--PGD} to design and analyze $\ppgd$--$\pgd$.

\section{An upper bound on the distance to the determinantal variety from a tangent line}
\label{sec:UpperBoundDistanceToDeterminantalVarietyFromTangent}
The analysis of $\ppgd$ in Section~\ref{sec:P2GDmapDeterminantalVariety} is based on Proposition~\ref{prop:GlobalSecondOrderUpperBoundDistanceToRealDeterminantalVarietyFromTangentLine}, which offers an upper bound on the distance to $\R_{\le r}^{m \times n}$ from each of its tangent lines.
A trivial upper bound on $\dist(X+Z, \R_{\le r}^{m \times n})$ holding for all $X \in \R_{\le r}^{m \times n}$ and $Z \in \tancone{\R_{\le r}^{m \times n}}{X}$ is $\norm{Z}$, and \cite[Proposition~3.6]{SchneiderUschmajew2015} tightens it to $\frac{1}{\sqrt{2}} \norm{Z}$. However, an upper bound proportional to $\norm{Z}^2$ is needed in the proof of Proposition~\ref{prop:P2GDmapUpperBoundCost}; such a bound is given and the proportionality factor is shown to be $1/\sigma_{\min}(X)$ up to a constant in Proposition~\ref{prop:GlobalSecondOrderUpperBoundDistanceToRealDeterminantalVarietyFromTangentLine}.

\begin{proposition}
\label{prop:GlobalSecondOrderUpperBoundDistanceToRealDeterminantalVarietyFromTangentLine}
For all $X \in \R_{\le r}^{m \times n} \setminus \{0_{m \times n}\}$,
\begin{equation*}
\frac{\sqrt{5}-1}{r+1} \frac{1}{2\sigma_{\min}(X)}
\le \sup_{Z \in \tancone{\R_{\le r}^{m \times n}}{X} \setminus \{0_{m \times n}\}} \frac{\dist(X+Z, \R_{\le r}^{m \times n})}{\norm{Z}^2}
\le \frac{1}{2\sigma_{\min}(X)}.
\end{equation*}
\end{proposition}

\begin{proof}
Let $\ushort{r} \coloneq \rank X$. We first establish the upper bound. Let
\begin{equation*}
X = [U \; U_\perp] \diag(\Sigma, 0_{m-\ushort{r} \times n-\ushort{r}}) [V \; V_\perp]^\tp
\end{equation*}
be an SVD, and $Z \in \tancone{\R_{\le r}^{m \times n}}{X} \setminus \{0_{m \times n}\}$. By~\eqref{eq:TanConeDeterminantalVariety}, there are $A_{1, 1} \in \R^{\ushort{r} \times \ushort{r}}$, $A_{1, 2} \in \R^{\ushort{r} \times n-\ushort{r}}$, $A_{2, 1} \in \R^{m-\ushort{r} \times \ushort{r}}$, and $A_{2, 2} \in \R_{\le r-\ushort{r}}^{m-\ushort{r} \times n-\ushort{r}}$ such that
\begin{equation*}
Z =
[U \; U_\perp]
\begin{bmatrix}
A_{1, 1} & A_{1, 2} \\ A_{2, 1} & A_{2, 2}
\end{bmatrix}
[V \; V_\perp]^\tp.
\end{equation*}
Define the function $\gamma : [0, \infty) \to \R_{\le r}^{m \times n}$ by
\begin{equation*}
\gamma(t) \coloneq \big(U+t(U_\perp A_{2, 1} + {\textstyle\frac{1}{2}}UA_{1, 1})\Sigma^{-1}\big) \Sigma \big(V+t(V_\perp A_{1, 2}^\tp + {\textstyle\frac{1}{2}}VA_{1, 1}^\tp)\Sigma^{-1}\big)^\tp + tU_\perp A_{2, 2} V_\perp^\tp,
\end{equation*}
where the first term is inspired from \cite[(13)]{ZhouEtAl2016}; $\gamma$ is well defined since the ranks of the two terms are respectively upper bounded by $\ushort{r}$ and $r-\ushort{r}$.
For all $t \in [0, \infty)$,
\begin{equation*}
\gamma(t)
= X + t Z + \frac{t^2}{4}
[U \; U_\perp]
\begin{bmatrix}
A_{1, 1} \\ 2A_{2, 1}
\end{bmatrix}
\Sigma^{-1}
\begin{bmatrix}
A_{1, 1} & 2A_{1, 2}
\end{bmatrix}
[V \; V_\perp]^\tp
\end{equation*}
thus
\begin{equation*}
\dist(X+tZ, \R_{\le r}^{m \times n})
\le \norm{(X+tZ)-\gamma(t)}
= \frac{t^2}{4} \left\|\begin{bmatrix} A_{1, 1}\Sigma^{-1}A_{1, 1} & 2A_{1, 1}\Sigma^{-1}A_{1, 2} \\ 2A_{2, 1}\Sigma^{-1}A_{1, 1} & 4A_{2, 1}\Sigma^{-1}A_{1, 2}\end{bmatrix}\right\|.
\end{equation*}
Observe that
\begin{align*}
&\left\|\begin{bmatrix} A_{1, 1}\Sigma^{-1}A_{1, 1} & 2A_{1, 1}\Sigma^{-1}A_{1, 2} \\ 2A_{2, 1}\Sigma^{-1}A_{1, 1} & 4A_{2, 1}\Sigma^{-1}A_{1, 2}\end{bmatrix}\right\|^2\\
&= \norm{A_{1, 1}\Sigma^{-1}A_{1, 1}}^2 + 4 \norm{A_{1, 1}\Sigma^{-1}A_{1, 2}}^2 + 4 \norm{A_{2, 1}\Sigma^{-1}A_{1, 1}}^2 + 16 \norm{A_{2, 1}\Sigma^{-1}A_{1, 2}}^2\\
&\le \norm{\Sigma^{-1}}_2^2 \left(\norm{A_{1, 1}}^4 + 4 \norm{A_{1, 1}}^2 \norm{A_{1, 2}}^2 + 4 \norm{A_{1, 1}}^2 \norm{A_{2, 1}}^2 + 16 \norm{A_{1, 2}}^2 \norm{A_{2, 1}}^2\right)\\
&\le \norm{\Sigma^{-1}}_2^2 \norm{Z}^4 \max_{\substack{x, y, z \in \R \\ x^2+y^2+z^2 \le 1}} x^4 + 4 x^2 y^2 + 4 x^2 z^2 + 16 y^2 z^2\\
&= 4 \norm{\Sigma^{-1}}_2^2 \norm{Z}^4\\
&= \frac{4}{\sigma_{\ushort{r}}^2(X)} \norm{Z}^4,
\end{align*}
where the first inequality follows from Proposition~\ref{prop:UpperBoundFrobeniusNormProduct}. Therefore, for all $t \in [0, \infty)$,
\begin{equation*}
\dist(X+tZ, \R_{\le r}^{m \times n}) \le t^2 \frac{1}{2 \sigma_{\ushort{r}}(X)} \norm{Z}^2.
\end{equation*}
Choosing $t = 1$ yields the upper bound.

We now establish the lower bound. Let
\begin{equation*}
X = [U \; U_\perp] \diag(\sigma_1, \dots, \sigma_{\ushort{r}}, 0_{m-\ushort{r} \times n-\ushort{r}}) [V \; V_\perp]^\tp
\end{equation*}
be an SVD, and observe that
\begin{equation*}
Z
\coloneq [U \; U_\perp] \sigma_{\ushort{r}} \diag(0_{\ushort{r}-1 \times \ushort{r}-1}, \left[\begin{smallmatrix} 0 & 1 \\ 1 & 0 \end{smallmatrix}\right], I_{r-\ushort{r}}, 0_{m-r-1 \times n-r-1}) [V \; V_\perp]^\tp
\in \tancone{\R_{\le r}^{m \times n}}{X}.
\end{equation*}
The nonzero singular values of
\begin{equation*}
X+Z = [U \; U_\perp] \diag(\sigma_1, \dots, \sigma_{\ushort{r}-1}, \sigma_{\ushort{r}} \left[\begin{smallmatrix} 1 & 1 \\ 1 & 0 \end{smallmatrix}\right], \sigma_{\ushort{r}} I_{r-\ushort{r}}, 0_{m-r-1 \times n-r-1}) [V \; V_\perp]^\tp
\end{equation*}
are $\sigma_1, \dots, \sigma_{\ushort{r}}$ and the absolute values of the eigenvalues of $\left[\begin{smallmatrix} 1 & 1 \\ 1 & 0 \end{smallmatrix}\right]$ multiplied by $\sigma_{\ushort{r}}$, i.e., $\frac{\sqrt{5}+1}{2}\sigma_{\ushort{r}}$ and $\frac{\sqrt{5}-1}{2}\sigma_{\ushort{r}}$. Thus, $\dist(X+Z, \R_{\le r}^{m \times n}) = \frac{\sqrt{5}-1}{2}\sigma_{\ushort{r}}$, $\norm{Z}^2 = (r-\ushort{r}+2)\sigma_{\ushort{r}}^2 \le (r+1)\sigma_{\ushort{r}}^2$, and the lower bound follows.
\end{proof}

Proposition~\ref{prop:GlobalSecondOrderUpperBoundDistanceToRealDeterminantalVarietyFromTangentLine} can be related to geometric principles. Let $\gamma$ be a curve on the embedded submanifold $\R_r^{m \times n}$ of $\R^{m \times n}$ \cite[Example~8.14]{Lee2003}. In view of the Gauss formula along a curve~\cite[Corollary~8.3]{Lee2018}, the normal part of the acceleration of $\gamma$ is given by $\mathrm{I\!I}(\gamma', \gamma')$, where $\mathrm{I\!I}$ denotes the second fundamental form. In view of~\cite[\S 4]{FepponLermusiaux2018}, the largest principal curvature of $\R_r^{m \times n}$ at $X$ is $1/\sigma_r(X)$; hence $\|\mathrm{I\!I}(\gamma'(t), \gamma'(t))\| \leq \|\gamma'(t)\|^2/\sigma_r(\gamma(t))$, and the bound is attained when $\gamma'(t)$ is along the corresponding principal direction.

\section{A lower bound on the step size of the $\ppgd$ map}
\label{sec:P2GDmapDeterminantalVariety}
The $\ppgd$ map, already briefly described in Section~\ref{subsec:StateOfTheArt}, is defined as Algorithm~\ref{algo:P2GDmap}. Corollary~\ref{coro:P2GDmapArmijoCondition}, based on Proposition~\ref{prop:P2GDmapUpperBoundCost}, provides an upper bound on the number of iterations performed in the backtracking procedure (while loop of Algorithm~\ref{algo:P2GDmap}). This serves as a basis for the convergence analysis conducted in Section~\ref{sec:P2GDRdeterminantalVariety} since the $\ppgd$ map is used as a subroutine by the $\ppgdr$ map (Algorithm~\ref{algo:P2GDRmap}).
Algorithm~\ref{algo:P2GDmap} corresponds to the iteration map of \cite[Algorithm~3]{SchneiderUschmajew2015} except that the initial step size for the backtracking procedure is chosen in a given bounded interval.

\begin{algorithm}[H]
\caption{$\ppgd$ map on $\R_{\le r}^{m \times n}$ (based on \cite[Algorithm~3]{SchneiderUschmajew2015})}
\label{algo:P2GDmap}
\begin{algorithmic}[1]
\Require
$(f, r, \ushort{\alpha}, \oshort{\alpha}, \beta, c)$, where $f : \R^{m \times n} \to \R$ is differentiable with $\nabla f$ locally Lipschitz continuous, $r < \min\{m, n\}$ is a positive integer, $0 < \ushort{\alpha} \le \oshort{\alpha} < \infty$, and $\beta, c \in (0, 1)$.
\Input
$X \in \R_{\le r}^{m \times n}$.
\Output
$Y \in \ppgd(X; f, r, \ushort{\alpha}, \oshort{\alpha}, \beta, c)$.

\State
\label{algo:P2GDmap:InitializationLineSearch}
Choose $G \in \proj{\tancone{\R_{\le r}^{m \times n}}{X}}{-\nabla f(X)}$, $\alpha \in [\ushort{\alpha}, \oshort{\alpha}]$, and $Y \in \proj{\R_{\le r}^{m \times n}}{X + \alpha G}$;
\While
{$f(Y) > f(X) - c \, \alpha \s(X; f, \R_{\le r}^{m \times n})^2$}
\State
$\alpha \gets \alpha \beta$;
\State
\label{algo:P2GDmap:LineSearch}
Choose $Y \in \proj{\R_{\le r}^{m \times n}}{X + \alpha G}$;
\EndWhile
\State
Return $Y$.
\end{algorithmic}
\end{algorithm}

By \cite[Lemma~1.2.3]{Nesterov2018}, since $\nabla f$ is locally Lipschitz continuous, i.e., for every closed ball $\mathcal{B} \subsetneq \R^{m \times n}$,
\begin{equation*}
\lip_{\mathcal{B}}(\nabla f) \coloneq \sup_{\substack{X, Y \in \mathcal{B} \\ X \ne Y}} \frac{\norm{\nabla f(X) - \nabla f(Y)}}{\norm{X-Y}} < \infty,
\end{equation*}
it holds for all $X, Y \in \mathcal{B}$ that
\begin{equation}
\label{eq:InequalityLipschitzContinuousGradient}
|f(Y) - f(X) - \ip{\nabla f(X)}{Y-X}| \le \frac{\lip_{\mathcal{B}}(\nabla f)}{2} \norm{Y-X}^2.
\end{equation}

\begin{proposition}
\label{prop:P2GDmapUpperBoundCost}
Let $X \in \R_{\le r}^{m \times n}$ and $\oshort{\alpha} \in (0, \infty)$. Let $\mathcal{B} \subsetneq \R^{m \times n}$ be a closed ball containing $\ball[X, 2 \oshort{\alpha} \s(X; f, \R_{\le r}^{m \times n})]$. Then, for all $G \in \proj{\tancone{\R_{\le r}^{m \times n}}{X}}{-\nabla f(X)}$ and $\alpha \in [0, \oshort{\alpha}]$, it holds that
\begin{equation}
\label{eq:P2GDmapUpperBoundCost}
\sup f(\proj{\R_{\le r}^{m \times n}}{X + \alpha G}) \le f(X) + \s(X; f, \R_{\le r}^{m \times n})^2 \alpha \left(-1+\kappa_\mathcal{B}(X; f, \oshort{\alpha})\alpha\right),
\end{equation}
where
\begin{equation*}
\kappa_\mathcal{B}(X; f, \oshort{\alpha}) \coloneq \left\{\begin{array}{ll}
\dfrac{1}{2} \lip\limits_\mathcal{B}(\nabla f) & \text{if } X = 0_{m \times n},\\
\dfrac{\norm{\nabla f(X)}}{2\sigma_{\min}(X)} + \dfrac{1}{2} \lip\limits_\mathcal{B}(\nabla f) \left(\oshort{\alpha}\dfrac{\s(X; f, \R_{\le r}^{m \times n})}{2\sigma_{\min}(X)}+1\right)^2 & \text{otherwise}.
\end{array}\right.
\end{equation*}
\end{proposition}

\begin{proof}
By Proposition~\ref{prop:TriangleInequalityProjection}, for all $G \in \proj{\tancone{\R_{\le r}^{m \times n}}{X}}{-\nabla f(X)}$ and $\alpha \in [0, \oshort{\alpha}]$, $\proj{\R_{\le r}^{m \times n}}{X + \alpha G} \subseteq \mathcal{B}$.
Let $G \in \proj{\tancone{\R_{\le r}^{m \times n}}{X}}{-\nabla f(X)}$. The proof of \eqref{eq:P2GDmapUpperBoundCost} is based on \eqref{eq:InequalityLipschitzContinuousGradient} and the equality $\ip{\nabla f(X)}{G} = -\s(X; f, \R_{\le r}^{m \times n})^2$, which holds by Proposition~\ref{prop:ProjectionOntoClosedCone} since $\tancone{\R_{\le r}^{m \times n}}{X}$ is a closed cone.
The result follows readily from \eqref{eq:InequalityLipschitzContinuousGradient} if $X = 0_{m \times n}$ since $\tancone{\R_{\le r}^{m \times n}}{0_{m \times n}} = \R_{\le r}^{m \times n}$. Let us therefore consider the case where $X \ne 0_{m \times n}$.
Let $L \coloneq \lip_\mathcal{B}(\nabla f)$. For all $\alpha \in [0, \oshort{\alpha}]$ and $Y \in \proj{\R_{\le r}^{m \times n}}{X + \alpha G}$,
\begin{align*}
f(Y)-f(X)
\le\:& \ip{\nabla f(X)}{Y-X} + \frac{L}{2} \norm{Y-X}^2\\
=\:& \ip{\nabla f(X)}{Y-(X+\alpha G)+\alpha G} + \frac{L}{2} \norm{Y-(X+\alpha G)+\alpha G}^2\\
=\:& - \alpha \s(X; f, \R_{\le r}^{m \times n})^2 + \ip{\nabla f(X)}{Y-(X+\alpha G)} + \frac{L}{2} \norm{Y-(X+\alpha G)+\alpha G}^2\\
\le\:& - \alpha \s(X; f, \R_{\le r}^{m \times n})^2 + \norm{\nabla f(X)}\dist(X+\alpha G, \R_{\le r}^{m \times n})\\
&+ \frac{L}{2} \left(\dist(X+\alpha G, \R_{\le r}^{m \times n})+\alpha \s(X; f, \R_{\le r}^{m \times n})\right)^2\\
\le\:& - \alpha \s(X; f, \R_{\le r}^{m \times n})^2 + \alpha^2 \frac{\norm{\nabla f(X)}}{2\sigma_{\min}(X)} \s(X; f, \R_{\le r}^{m \times n})^2\\
&+ \frac{L}{2} \left(\alpha^2 \frac{\s(X; f, \R_{\le r}^{m \times n})^2}{2\sigma_{\min}(X)} +\alpha \s(X; f, \R_{\le r}^{m \times n})\right)^2\\
=\:& \alpha \s(X; f, \R_{\le r}^{m \times n})^2 \left(-1+\alpha\left(\frac{\norm{\nabla f(X)}}{2\sigma_{\min}(X)} + \frac{L}{2} \left(\alpha\frac{\s(X; f, \R_{\le r}^{m \times n})}{2\sigma_{\min}(X)}+1\right)^2\right)\right)\\
\le\:& \alpha \s(X; f, \R_{\le r}^{m \times n})^2 \left(-1+\alpha\kappa_\mathcal{B}(X; f, \oshort{\alpha})\right),
\end{align*}
where the third inequality follows from Proposition~\ref{prop:GlobalSecondOrderUpperBoundDistanceToRealDeterminantalVarietyFromTangentLine}.
\end{proof}

\begin{corollary}
\label{coro:P2GDmapArmijoCondition}
Let $\mathcal{B}$ be a closed ball as in Proposition~\ref{prop:P2GDmapUpperBoundCost}. The while loop in Algorithm~\ref{algo:P2GDmap} terminates after at most
\begin{equation}
\label{eq:MaxNumIterationsP2GDmap}
\max\left\{0, \left\lceil\ln\bigg(\frac{1-c}{\alpha_0\kappa_\mathcal{B}(X; f, \oshort{\alpha})}\bigg)/\ln \beta\right\rceil\right\}
\end{equation}
iterations, where $\alpha_0$ is the initial step size chosen in line~\ref{algo:P2GDmap:InitializationLineSearch}, and every $Y \in \hyperref[algo:P2GDmap]{\ppgd}(X; f, r, \ushort{\alpha}, \oshort{\alpha}, \beta, c)$ satisfies the Armijo condition
\begin{equation}
\label{eq:P2GDmapArmijoCondition}
f(Y) \le f(X) - c \, \alpha \s(X; f, \R_{\le r}^{m \times n})^2
\end{equation}
with a step size $\alpha \in \big[\min\{\ushort{\alpha}, \frac{\beta(1-c)}{\kappa_\mathcal{B}(X; f, \oshort{\alpha})}\}, \oshort{\alpha}\big]$.
\end{corollary}

\begin{proof}
If $X$ is B-stationary for~\eqref{eq:OptiDeterminantalVariety}, then $G = 0_{m \times n}$, the while loop is not executed, and $Y = X$. Assume that $X$ is not B-stationary for~\eqref{eq:OptiDeterminantalVariety}. For all $\alpha \in (0, \infty)$,
\begin{equation*}
f(X) + \s(X; f, \R_{\le r}^{m \times n})^2 \alpha \big(-1+\kappa_\mathcal{B}(X; f, \oshort{\alpha})\alpha\big) \le f(X) - c \s(X; f, \R_{\le r}^{m \times n})^2 \alpha
\end{equation*}
if and only if
\begin{equation*}
\alpha \le \frac{1-c}{\kappa_\mathcal{B}(X; f, \oshort{\alpha})}.
\end{equation*}
Since the left-hand side of the first inequality is an upper bound on $f(\proj{\R_{\le r}^{m \times n}}{X + \alpha G})$ for all $\alpha \in (0, \oshort{\alpha}]$, the Armijo condition is satisfied if $\alpha \in (0, \min\{\oshort{\alpha}, \frac{1-c}{\kappa_\mathcal{B}(X; f, \oshort{\alpha})}\}]$.
Therefore, either the initial step size $\alpha_0$ chosen in $[\ushort{\alpha}, \oshort{\alpha}]$ satisfies the Armijo condition or the while loop ends after iteration $i \in \N \setminus \{0\}$ with $\alpha = \alpha_0 \beta^i$ such that $\frac{\alpha}{\beta} > \frac{1-c}{\kappa_\mathcal{B}(X; f, \oshort{\alpha})}$. In the second case, $i < 1+\ln(\frac{1-c}{\alpha_0\kappa_\mathcal{B}(X; f, \oshort{\alpha})})/\ln(\beta)$ and thus $i \le \lceil\ln(\frac{1-c}{\alpha_0\kappa_\mathcal{B}(X; f, \oshort{\alpha})})/\ln(\beta)\rceil$.
\end{proof}

\section{$\ppgdr$ and its convergence analysis}
\label{sec:P2GDRdeterminantalVariety}
The $\ppgdr$ map, already briefly described in Section~\ref{subsec:Contribution}, is defined as Algorithm~\ref{algo:P2GDRmap}.

\begin{algorithm}[H]
\caption{$\ppgdr$ map on $\R_{\le r}^{m \times n}$}
\label{algo:P2GDRmap}
\begin{algorithmic}[1]
\Require
$(f, r, \ushort{\alpha}, \oshort{\alpha}, \beta, c, \Delta)$, where $f : \R^{m \times n} \to \R$ is differentiable with $\nabla f$ locally Lipschitz continuous, $r < \min\{m, n\}$ is a positive integer, $0 < \ushort{\alpha} \le \oshort{\alpha} < \infty$, $\beta, c \in (0, 1)$, and $\Delta \in (0, \infty)$.
\Input
$X \in \R_{\le r}^{m \times n}$.
\Output
$Y \in \ppgdr(X; f, r, \ushort{\alpha}, \oshort{\alpha}, \beta, c, \Delta)$.

\For
{$i \in \{0, \dots, \rank X - \rank_\Delta X\}$}
\State
Choose $\hat{X}^i \in \proj{\R_{\rank X - i}^{m \times n}}{X}$;
\State
Choose $\tilde{X}^i \in \hyperref[algo:P2GDmap]{\ppgd}(\hat{X}^i; f, r, \ushort{\alpha}, \oshort{\alpha}, \beta, c)$;
\EndFor
\State
Return $Y \in \argmin_{\{\tilde{X}^i \mid i \in \{0, \dots, \rank X - \rank_\Delta X\}\}} f$.
\end{algorithmic}
\end{algorithm}

\begin{definition}
\label{def:P2GDR}
$\ppgdr$ is Algorithm~\ref{algo:ModelIterativeOptimizationMethod} applied to problem~\eqref{eq:OptiDeterminantalVariety} with $A$ the $\ppgdr$ map and $S$ the set of B-stationary points.
\end{definition}

$\ppgdr$ is analyzed in Section~\ref{subsec:ConvergenceAnalysis} based on the theoretical framework from Section~\ref{sec:SufficientDescentMaps}. The choice of the parameter $\Delta$ is briefly discussed in Section~\ref{subsec:ChoiceDelta}.

\subsection{Convergence analysis}
\label{subsec:ConvergenceAnalysis}
Lemma~\ref{lemma:P2GDRmapPolak} states that the $\ppgdr$ map is an $(f, \R_{\le r}^{m \times n})$-sufficient-descent map in the sense of Definition~\ref{def:SufficientDescentMap}: for every $\ushort{X} \in \R_{\le r}^{m \times n}$ that is not B-stationary for~\eqref{eq:OptiDeterminantalVariety}, the decrease in $f$ obtained by applying the $\ppgdr$ map to every $X \in \R_{\le r}^{m \times n}$ sufficiently close to $\ushort{X}$ is bounded away from zero.

\begin{lemma}
\label{lemma:P2GDRmapPolak}
The $\ppgdr$ map is an $(f, \R_{\le r}^{m \times n})$-sufficient-descent map in the sense of Definition~\ref{def:SufficientDescentMap}: for every $\ushort{X} \in \R_{\le r}^{m \times n}$ such that $\s(\ushort{X}; f, \R_{\le r}^{m \times n}) > 0$, there exist $\varepsilon(\ushort{X}), \delta(\ushort{X}) \in (0, \infty)$ such that, for all $X \in \ball[\ushort{X}, \varepsilon(\ushort{X})] \cap \R_{\le r}^{m \times n}$ and $Y \in \hyperref[algo:P2GDRmap]{\ppgdr}(X; f, r, \ushort{\alpha}, \oshort{\alpha}, \beta, c, \Delta)$,
\begin{equation}
\label{eq:PolakConditionIterationMap}
f(Y) - f(X) \le - \delta(\ushort{X}).
\end{equation}
\end{lemma}

\begin{proof}
Let $\ushort{X} \in \R_{\le r}^{m \times n}$ be such that $\s(\ushort{X}; f, \R_{\le r}^{m \times n}) > 0$. Define $\ushort{r} \coloneq \rank \ushort{X}$. This proof constructs $\varepsilon(\ushort{X})$ and $\delta(\ushort{X})$ based on the Armijo condition \eqref{eq:P2GDmapArmijoCondition} given in Corollary~\ref{coro:P2GDmapArmijoCondition}. The first step of the proof ends with the definition of $\delta(\ushort{X})$ in~\eqref{eq:P2GDRmapPolakDelta}. Then, the second and third steps define $\varepsilon(\ushort{X})$ for the cases where $\ushort{r} = r$ and $\ushort{r} < r$, respectively. The rank reduction mechanism of $\ppgdr$ plays a role only in the third step.

By Proposition~\ref{prop:ProjectionOntoClosedCone}, $\norm{\nabla f(\ushort{X})} \ge \s(\ushort{X}; f, \R_{\le r}^{m \times n})$. Since $\nabla f$ is continuous at $\ushort{X}$, there exists $\rho_1(\ushort{X}) \in (0, \infty)$ such that, for all $X \in \ball[\ushort{X}, \rho_1(\ushort{X})]$, $\norm{\nabla f(X)-\nabla f(\ushort{X})} \le \frac{1}{2} \norm{\nabla f(\ushort{X})}$ and hence, as $|\norm{\nabla f(X)}-\norm{\nabla f(\ushort{X})}| \le \norm{\nabla f(X)-\nabla f(\ushort{X})}$,
\begin{equation}
\label{eq:P2GDRmapPolakContinuityGradient}
\frac{1}{2} \norm{\nabla f(\ushort{X})} \le \norm{\nabla f(X)} \le \frac{3}{2} \norm{\nabla f(\ushort{X})}.
\end{equation}
Define
\begin{equation*}
\bar{\rho}(\ushort{X}) \coloneq 3 \oshort{\alpha} \norm{\nabla f(\ushort{X})} + \rho_1(\ushort{X}).
\end{equation*}
Then, for every $X \in \ball[\ushort{X}, \rho_1(\ushort{X})] \cap \R_{\le r}^{m \times n}$, the inclusion $\ball[X, 2 \oshort{\alpha} \s(X; f, \R_{\le r}^{m \times n})] \subseteq \ball[\ushort{X}, \bar{\rho}(\ushort{X})]$ holds since, for all $Z \in \ball[X, 2 \oshort{\alpha} \s(X; f, \R_{\le r}^{m \times n})]$,
\begin{equation*}
\norm{Z-\ushort{X}}
\le \norm{Z-X} + \norm{X-\ushort{X}}
\le 2 \oshort{\alpha} \s(X; f, \R_{\le r}^{m \times n}) + \rho_1(\ushort{X})
\le 2 \oshort{\alpha} \norm{\nabla f(X)} + \rho_1(\ushort{X})
\le \bar{\rho}(\ushort{X}),
\end{equation*}
where the third and fourth inequalities follow respectively from Proposition~\ref{prop:ProjectionOntoClosedCone} and the second inequality in~\eqref{eq:P2GDRmapPolakContinuityGradient}. Thus, for all $X \in \ball[\ushort{X}, \rho_1(\ushort{X})] \cap \R_{\le r}^{m \times n}$ and $\tilde{X} \in \hyperref[algo:P2GDmap]{\ppgd}(X; f, r, \ushort{\alpha}, \oshort{\alpha}, \beta, c)$, Corollary~\ref{coro:P2GDmapArmijoCondition} applies with the ball $\ball[\ushort{X}, \bar{\rho}(\ushort{X})]$, and \eqref{eq:P2GDmapArmijoCondition} yields
\begin{equation}
\label{eq:P2GDRmapPolakP2GDmapArmijoCondition}
f(\tilde{X}) \le f(X) - c \s(X; f, \R_{\le r}^{m \times n})^2 \min\left\{\ushort{\alpha}, \frac{\beta(1-c)}{\kappa_{\ball[\ushort{X}, \bar{\rho}(\ushort{X})]}(X; f, \oshort{\alpha})}\right\}.
\end{equation}
Define
\begin{equation}
\label{eq:P2GDRmapPolakKappa}
\oshort{\kappa}(\ushort{X}) \coloneq \left\{\begin{array}{ll}
\dfrac{1}{2} \lip_{\ball[\ushort{X}, \bar{\rho}(\ushort{X})]}(\nabla f) & \text{if } \ushort{X} = 0_{m \times n},\\
\dfrac{3\norm{\nabla f(\ushort{X})}}{2\sigma_{\ushort{r}}(\ushort{X})} + \dfrac{1}{2} \lip_{\ball[\ushort{X}, \bar{\rho}(\ushort{X})]}(\nabla f) \left(\oshort{\alpha}\dfrac{3\norm{\nabla f(\ushort{X})}}{2\sigma_{\ushort{r}}(\ushort{X})}+1\right)^2 & \text{if } \ushort{X} \ne 0_{m \times n}.
\end{array}\right.
\end{equation}
If $\ushort{X} \ne 0_{m \times n}$, then for all $X \in \ball[\ushort{X}, \frac{1}{2}\sigma_{\ushort{r}}(\ushort{X})]$ it holds that
\begin{equation*}
|\sigma_{\ushort{r}}(X)-\sigma_{\ushort{r}}(\ushort{X})|
\le \norm{X-\ushort{X}}
\le \frac{1}{2}\sigma_{\ushort{r}}(\ushort{X}),
\end{equation*}
where the first inequality follows from Proposition~\ref{prop:SingularValuesLipschitz}, and thus $\sigma_{\ushort{r}}(X) \ge \frac{1}{2}\sigma_{\ushort{r}}(\ushort{X})$. Hence, by Proposition~\ref{prop:ProjectionOntoClosedCone} and the second inequality in~\eqref{eq:P2GDRmapPolakContinuityGradient}, if $\ushort{X} \ne 0_{m \times n}$, then for all $X \in \ball[\ushort{X}, \min\{\rho_1(\ushort{X}), \frac{1}{2}\sigma_{\ushort{r}}(\ushort{X})\}] \cap \R_{\ushort{r}}^{m \times n}$ it holds that
\begin{equation}
\label{eq:P2GDRmapPolakKappaUpperBound}
\kappa_{\ball[\ushort{X}, \bar{\rho}(\ushort{X})]}(X; f, \oshort{\alpha}) \le \oshort{\kappa}(\ushort{X}).
\end{equation}
Therefore, by \eqref{eq:P2GDRmapPolakP2GDmapArmijoCondition}, for all $X \in \ball[\ushort{X}, \min\{\rho_1(\ushort{X}), \frac{1}{2}\sigma_{\ushort{r}}(\ushort{X})\}] \cap \R_{\ushort{r}}^{m \times n}$ and $\tilde{X} \in \hyperref[algo:P2GDmap]{\ppgd}(X; f, r, \ushort{\alpha}, \oshort{\alpha}, \beta, c)$,
\begin{equation}
\label{eq:P2GDRmapPolakP2GDmapArmijoCondition+}
f(\tilde{X}) \le f(X) - c \s(X; f, \R_{\le r}^{m \times n})^2 \min\left\{\ushort{\alpha}, \frac{\beta(1-c)}{\oshort{\kappa}(\ushort{X})}\right\}.
\end{equation}
The inequality \eqref{eq:P2GDRmapPolakKappaUpperBound} also holds if $\ushort{X} = X = 0_{m \times n}$, hence so does \eqref{eq:P2GDRmapPolakP2GDmapArmijoCondition+}.
Define
\begin{equation}
\label{eq:P2GDRmapPolakDelta}
\delta(\ushort{X}) \coloneq \left\{\begin{array}{ll}
\dfrac{c}{4} \s(\ushort{X}; f, \R_{\le r}^{m \times n})^2 \min\left\{\ushort{\alpha}, \dfrac{\beta(1-c)}{\oshort{\kappa}(\ushort{X})}\right\} & \text{if } \ushort{r} = r,\\
\dfrac{c \norm{\nabla f(\ushort{X})}^2}{12(\min\{m, n\}-r+1)} \min\left\{\ushort{\alpha}, \dfrac{\beta(1-c)}{\oshort{\kappa}(\ushort{X})}\right\} & \text{if } \ushort{r} < r.
\end{array}\right.
\end{equation}

Let us consider the case where $\ushort{r} = r$. On $\ball(\ushort{X}, \sigma_r(\ushort{X})) \cap \R_{\le r}^{m \times n} = \ball(\ushort{X}, \sigma_r(\ushort{X})) \cap \R_r^{m \times n}$, $\s(\cdot; f, \R_{\le r}^{m \times n})$ coincides with the norm of the Riemannian gradient of the restriction of $f$ to the smooth manifold $\R_r^{m \times n}$ (see Section~\ref{subsec:TangentNormalCones}), which is continuous \cite[\S 2.1]{SchneiderUschmajew2015}. Thus, there exists $\rho_2(\ushort{X}) \in (0, \sigma_r(\ushort{X}))$ such that $\s(\ball[\ushort{X}, \rho_2(\ushort{X})] \cap \R_r^{m \times n}; f, \R_{\le r}^{m \times n}) \subseteq [\frac{1}{2}\s(\ushort{X}; f, \R_{\le r}^{m \times n}), \frac{3}{2}\s(\ushort{X}; f, \R_{\le r}^{m \times n})]$. Define
\begin{equation*}
\varepsilon(\ushort{X}) \coloneq \min\left\{\rho_1(\ushort{X}), \rho_2(\ushort{X}), \frac{1}{2}\sigma_r(\ushort{X})\right\}.
\end{equation*}
Let $X \in \ball[\ushort{X}, \varepsilon(\ushort{X})] \cap \R_{\le r}^{m \times n}$ and $Y \in \hyperref[algo:P2GDRmap]{\ppgdr}(X; f, r, \ushort{\alpha}, \oshort{\alpha}, \beta, c, \Delta)$. There exists $\tilde{X} \in \hyperref[algo:P2GDmap]{\ppgd}(X; f, r, \ushort{\alpha}, \oshort{\alpha}, \beta, c)$ such that $f(Y) \le f(\tilde{X})$. Therefore, by \eqref{eq:P2GDRmapPolakP2GDmapArmijoCondition+} and \eqref{eq:P2GDRmapPolakDelta},
\begin{equation*}
f(Y)
\le f(\tilde{X})
\le f(X) - c \s(X; f, \R_{\le r}^{m \times n})^2 \min\left\{\ushort{\alpha}, \frac{\beta(1-c)}{\oshort{\kappa}(\ushort{X})}\right\}
\le f(X) - \delta(\ushort{X}).
\end{equation*}

Let us now consider the case where $\ushort{r} < r$. Since $f$ is continuous at $\ushort{X}$, there exists $\rho_0(\ushort{X}) \in (0, \infty)$ such that $f(\ball[\ushort{X}, \rho_0(\ushort{X})]) \subseteq [f(\ushort{X})-\delta(\ushort{X}), f(\ushort{X})+\delta(\ushort{X})]$.  Define
\begin{equation*}
\varepsilon(\ushort{X}) \coloneq \left\{\begin{array}{ll}
\min\{\Delta, \frac{1}{2}\rho_0(\ushort{X}), \frac{1}{2}\rho_1(\ushort{X})\} & \text{if } \ushort{X} = 0_{m \times n},\\[1mm]
\min\{\Delta, \frac{1}{2}\rho_0(\ushort{X}), \frac{1}{2}\rho_1(\ushort{X}), \frac{1}{4}\sigma_{\ushort{r}}(\ushort{X})\} & \text{if } \ushort{X} \ne 0_{m \times n}.
\end{array}\right. 
\end{equation*}
Let $X \in \ball[\ushort{X}, \varepsilon(\ushort{X})] \cap \R_{\le r}^{m \times n}$ and $Y \in \hyperref[algo:P2GDRmap]{\ppgdr}(X; f, r, \ushort{\alpha}, \oshort{\alpha}, \beta, c, \Delta)$. By Proposition~\ref{prop:LocalDeltaRank}, $\rank_\Delta X \le \ushort{r} \le \rank X$. Thus, $0 \le \rank X - \ushort{r} \le \rank X - \rank_\Delta X$ and there exist $\hat{X}^{\rank X-\ushort{r}} \in \proj{\R_{\ushort{r}}^{m \times n}}{X}$ and $\tilde{X}^{\rank X-\ushort{r}} \in \hyperref[algo:P2GDmap]{\ppgd}(\hat{X}^{\rank X-\ushort{r}}; f, r, \ushort{\alpha}, \oshort{\alpha}, \beta, c)$ such that $f(Y) \le f(\tilde{X}^{\rank X-\ushort{r}})$. By Proposition~\ref{prop:TriangleInequalityProjection}, $\hat{X}^{\rank X-\ushort{r}} \in \ball[\ushort{X},2\varepsilon(\ushort{X})]$. Therefore,
\begin{equation}
\label{eq:P2GDRmapPolakContinuityFunction}
f(\hat{X}^{\rank X-\ushort{r}}) \le f(X) + 2 \delta(\ushort{X}).
\end{equation}
Furthermore, Proposition~\ref{prop:ProjectionOntoClosedCone} and \eqref{eq:NormProjTanConeDeterminantalVariety} successively yield
\begin{equation*}
\norm{\nabla f(\hat{X}^{\rank X-\ushort{r}})}
\ge \s(\hat{X}^{\rank X-\ushort{r}}; f, \R_{\le r}^{m \times n})
\ge \sqrt{\frac{r-\ushort{r}}{\min\{m, n\}-\ushort{r}}} \norm{\nabla f(\hat{X}^{\rank X-\ushort{r}})}.
\end{equation*}
Thus, combining
\begin{equation*}
\frac{r-\ushort{r}}{\min\{m, n\}-\ushort{r}} \ge \frac{1}{\min\{m, n\}-r+1}
\end{equation*}
with \eqref{eq:P2GDRmapPolakContinuityGradient} yields
\begin{equation}
\label{eq:P2GDRmapPolakBoundsStationarityMeasure}
\s(\hat{X}^{\rank X-\ushort{r}}; f, \R_{\le r}^{m \times n}) \in \left[\frac{1}{2\sqrt{\min\{m, n\}-r+1}} \norm{\nabla f(\ushort{X})}, \frac{3}{2} \norm{\nabla f(\ushort{X})}\right].
\end{equation}
Hence,
\begin{align*}
f(Y)
&\le f(\tilde{X}^{\rank X-\ushort{r}})\\
&\le f(\hat{X}^{\rank X-\ushort{r}}) - c \s(\hat{X}^{\rank X-\ushort{r}}; f, \R_{\le r}^{m \times n})^2 \min\left\{\ushort{\alpha}, \frac{\beta(1-c)}{\oshort{\kappa}(\ushort{X})}\right\}\\
&\le f(X) + 2 \delta(\ushort{X}) - 3 \delta(\ushort{X})\\
&= f(X) - \delta(\ushort{X}),
\end{align*}
where the second inequality follows from \eqref{eq:P2GDRmapPolakP2GDmapArmijoCondition+} and the third from \eqref{eq:P2GDRmapPolakContinuityFunction}, \eqref{eq:P2GDRmapPolakBoundsStationarityMeasure}, and \eqref{eq:P2GDRmapPolakDelta}.
\end{proof}

\begin{theorem}
\label{thm:P2GDRPolak}
Consider a sequence generated by $\ppgdr$ (Definition~\ref{def:P2GDR}). If this sequence is finite, then its last element is B-stationary for~\eqref{eq:OptiDeterminantalVariety} in the sense of Definition~\ref{def:M/B-Stationarity}. If it is infinite, then all of its accumulation points are B-stationary for~\eqref{eq:OptiDeterminantalVariety}.
\end{theorem}

\begin{proof}
This follows from Proposition~\ref{prop:SufficientDescentMapAccumulation} and Lemma~\ref{lemma:P2GDRmapPolak}.
\end{proof}

\begin{corollary}
\label{coro:P2GDRPolak}
Let $(X_i)_{i \in \N}$ be a sequence generated by $\ppgdr$ (Definition~\ref{def:P2GDR}).
For every convergent subsequence $(X_{i_k})_{k \in \N}$, $\lim_{k \to \infty} \s(X_{i_k}; f, \R_{\le r}^{m \times n}) = 0$.
If $(X_i)_{i \in \N}$ is bounded, which is the case if the sublevel set $\{X \in \R_{\le r}^{m \times n} \mid f(X) \le f(X_0)\}$ is bounded, then $\lim_{i \to \infty} \s(X_i; f, \R_{\le r}^{m \times n}) = 0$ and $f$ has the same value at all accumulation points.
\end{corollary}

\begin{proof}
The two limits follow from Proposition~\ref{prop:DeterminantalVarietyNoSerendipitousPoint}. The final claim follows from the argument given in the proof of \cite[Theorem~65]{Polak1971}. Specifically, if $(X_i)_{i \in \N}$ is bounded, then it contains at least one convergent subsequence. Assume that $(X_{i_k})_{k \in \N}$ and $(X_{j_k})_{k \in \N}$ converge respectively to $\ushort{X}$ and $\oshort{X}$. The sequence $(f(X_i))_{i \in \N}$ is decreasing and, since $(X_i)_{i \in \N}$ is bounded and $f$ is continuous, it converges to $\inf_{i \in \N} f(X_i)$. Therefore, $f(\ushort{X}) = \lim_{k \to \infty} f(X_{i_k}) = \lim_{i \to \infty} f(X_i) = \lim_{k \to \infty} f(X_{j_k}) = f(\oshort{X})$.
\end{proof}

The analysis conducted in Sections~\ref{sec:P2GDmapDeterminantalVariety} and \ref{subsec:ConvergenceAnalysis} can be extended straightforwardly to the case where $f$ is only defined on an open subset of $\R^{m \times n}$ containing $\R_{\le r}^{m \times n}$.

\subsection{On the choice of $\Delta$}
\label{subsec:ChoiceDelta}
In many practical situations, when $\Delta$ is chosen reasonably small, all the iterates of $\ppgdr$ (Definition~\ref{def:P2GDR}) satisfy $\rank_\Delta X_i = \rank X_i$. In this case, the range of values in the for-loop of $\ppgdr$ always reduces to $\{0\}$, and $\ppgdr$ generates the same sequence $(X_i)_{i \in \N}$ as $\ppgd$. In this scenario, the only computational overhead in $\ppgdr$ is the computation of $\rank X_i$ and $\rank_\Delta X_i$. However, as mentioned in Section~\ref{subsec:Contribution}, for all $i \ge 1$, in view of line~\ref{algo:P2GDmap:LineSearch} of Algorithm~\ref{algo:P2GDmap}, it is reasonable to assume that $X_i$ has been obtained by a truncated SVD, in which case $\rank X_i$ and $\rank_\Delta X_i$ are immediately available, making the overhead insignificant. In summary, $\ppgdr$ offers stronger convergence properties than $\ppgd$, and while incurring an insignificant overhead in many practical situations.

However, the range of values in the for-loop of $\ppgdr$ can also be as large as $\{0, \dots, r\}$ (as recorded in Table~\ref{tab:ComparisonComputationalCostPerIteration}), and there are situations where this occurs each time the for-loop is reached (e.g., in the case of a bounded sublevel set, when $\Delta$ is chosen so large that $\rank_\Delta X = 0$ for all $X$ in the sublevel set). One can thus wonder whether it is possible to restrict (conditionally or not) the range of values in the for-loop while preserving the B-stationarity of the accumulation points. This is an open question. Nevertheless, in view of Proposition~\ref{prop:GlobalSecondOrderUpperBoundDistanceToRealDeterminantalVarietyFromTangentLine}, since in the neighborhood of every $X \in \R_{< r}^{m \times n}$, $\sigma_i$ can be arbitrarily small for every $i \in \{\rank X + 1, \dots, r\}$, it seems unlikely that $\ppgdr$ with a restricted for-loop can be analyzed along the lines of Section~\ref{subsec:ConvergenceAnalysis}. On the other hand, should the answer to the open question be negative, a counterexample other than the one from \cite[\S 2.2]{LevinKileelBoumal2023} would be required in view of~\cite[Remark~2.11]{LevinKileelBoumal2023}.

\section{A $\ppgd$--$\pgd$ hybrid}
\label{sec:P2GD--PGD}
This section takes advantage of Lemma~\ref{lemma:P2GDRmapPolak} and the theoretical framework from Section~\ref{sec:SufficientDescentMaps} to define a hybrid method, based on the $\ppgd$ map (Algorithm~\ref{algo:P2GDmap}) and the monotone $\pgd$ map (reviewed as Algorithm~\ref{algo:monotonePGDmap}), that accumulates at B-stationary points of~\eqref{eq:OptiDeterminantalVariety}. This hybrid method is called $\ppgd$--$\pgd$ and its iteration map, already briefly described in Section~\ref{subsec:Contribution}, is defined as Algorithm~\ref{algo:P2GD--PGDmap}. Other hybridizations are possible, but we emphasize this $\ppgd$--$\pgd$ hybrid because it combines two classic methods and does not rely on a rank reduction mechanism. It can be viewed as a way of speeding up the well-known $\pgd$ by offering opportunities to resort to the computationally cheaper $\ppgd$ (see Section~\ref{subsec:StateOfTheArt} and Table~\ref{tab:ComparisonComputationalCostPerIteration}) without relinquishing the property of accumulating at B-stationary points.

\begin{algorithm}[H]
\caption{monotone $\pgd$ map on $\R_{\le r}^{m \times n}$ (\cite[Algorithm~4.1]{OlikierWaldspurger} for problem~\eqref{eq:OptiDeterminantalVariety} with $\mu \coloneq f(x)$)}
\label{algo:monotonePGDmap}
\begin{algorithmic}[1]
\Require
$(f, r, \ushort{\alpha}, \oshort{\alpha}, \beta, c)$, where $f : \R^{m \times n} \to \R$ is differentiable with $\nabla f$ locally Lipschitz continuous, $r < \min\{m, n\}$ is a positive integer, $0 < \ushort{\alpha} \le \oshort{\alpha} < \infty$, and $\beta, c \in (0, 1)$.
\Input
$X \in \R_{\le r}^{m \times n}$.
\Output
$Y \in \pgd(X; f, r, \ushort{\alpha}, \oshort{\alpha}, \beta, c)$.

\State
Choose $\alpha \in [\ushort{\alpha}, \oshort{\alpha}]$ and $Y \in \proj{\R_{\le r}^{m \times n}}{X-\alpha\nabla f(X)}$;
\While
{$f(Y) > f(X) + c \ip{\nabla f(X)}{Y-X}$}
\State
$\alpha \gets \alpha \beta$;
\State
Choose $Y \in \proj{\R_{\le r}^{m \times n}}{X-\alpha\nabla f(X)}$;
\EndWhile
\State
Return $Y$.
\end{algorithmic}
\end{algorithm}

\begin{algorithm}[H]
\caption{$\ppgd$--$\pgd$ map on $\R_{\le r}^{m \times n}$}
\label{algo:P2GD--PGDmap}
\begin{algorithmic}[1]
\Require
$(f, r, \ushort{\alpha}, \oshort{\alpha}, \beta, c, \Delta)$, where $f : \R^{m \times n} \to \R$ is differentiable with $\nabla f$ locally Lipschitz continuous, $r < \min\{m, n\}$ is a positive integer, $0 < \ushort{\alpha} \le \oshort{\alpha} < \infty$, $\beta, c \in (0, 1)$, and $\Delta \in (0, \infty)$.
\Input
$X \in \R_{\le r}^{m \times n}$.
\Output
$Y \in \ppgd\text{--}\pgd(X; f, r, \ushort{\alpha}, \oshort{\alpha}, \beta, c, \Delta)$.

\If
{$\rank X = \rank_\Delta X$}
\State
\label{algo:P2GD--PGDmap:HybridStep}
Choose $Y \in \hyperref[algo:P2GDmap]{\ppgd}(X; f, r, \ushort{\alpha}, \oshort{\alpha}, \beta, c)$;
\Else
\State
Choose $Y \in \hyperref[algo:monotonePGDmap]{\pgd}(X; f, r, \ushort{\alpha}, \oshort{\alpha}, \beta, c)$;
\EndIf
\State
Return $Y$.
\end{algorithmic}
\end{algorithm}

\begin{lemma}
\label{lemma:P2GD--PGDmapSufficientDescentMap}
The $\ppgd$--$\pgd$ map is an $(f, \R_{\le r}^{m \times n})$-sufficient-descent map in the sense of Definition~\ref{def:SufficientDescentMap}.
\end{lemma}

\begin{proof}
If $\rank X = \rank_\Delta X$, then $\hyperref[algo:P2GDmap]{\ppgd}(X; f, r, \ushort{\alpha}, \oshort{\alpha}, \beta, c) = \hyperref[algo:P2GDRmap]{\ppgdr}(X; f, r, \ushort{\alpha}, \oshort{\alpha}, \beta, c, \Delta)$, hence Algorithm~\ref{algo:P2GD--PGDmap} remains unchanged if $\ppgd$ is replaced with $\ppgdr$ in line~\ref{algo:P2GD--PGDmap:HybridStep}. Since the monotone $\pgd$ map and the $\ppgdr$ map are $(f, \R_{\le r}^{m \times n})$-sufficient-descent maps (see \cite{OlikierWaldspurger} and Lemma~\ref{lemma:P2GDRmapPolak}), the $\ppgd$--$\pgd$ map is an $(f, \R_{\le r}^{m \times n})$-sufficient-descent map by Proposition~\ref{prop:SufficientDescentMapFiniteUnion}.
\end{proof}

Note that Lemma~\ref{lemma:P2GD--PGDmapSufficientDescentMap} remains true if ``$\cup~\hyperref[algo:monotonePGDmap]{\pgd}(X; f, r, \ushort{\alpha}, \oshort{\alpha}, \beta, c)$'' is inserted at the end of line~\ref{algo:P2GD--PGDmap:HybridStep} in Algorithm~\ref{algo:P2GD--PGDmap}. In other words, whenever line~\ref{algo:P2GD--PGDmap:HybridStep} of Algorithm~\ref{algo:P2GD--PGDmap} is reached, the monotone $\pgd$ map can be chosen instead of the $\ppgd$ map without affecting Lemma~\ref{lemma:P2GD--PGDmapSufficientDescentMap}, nor Theorem~\ref{thm:P2GD--PGD_Bstationary}.

\begin{definition}
\label{def:P2GD--PGD}
$\ppgd$--$\pgd$ is Algorithm~\ref{algo:ModelIterativeOptimizationMethod} applied to problem~\eqref{eq:OptiDeterminantalVariety} with $A$ the $\ppgd$--$\pgd$ map and $S$ the set of B-stationary points.
\end{definition}

\begin{theorem}
\label{thm:P2GD--PGD_Bstationary}
Consider a sequence generated by $\ppgd$--$\pgd$. If this sequence is finite, then its last element is B-stationary for~\eqref{eq:OptiDeterminantalVariety} in the sense of Definition~\ref{def:M/B-Stationarity}. If it is infinite, then all of its accumulation points are B-stationary for~\eqref{eq:OptiDeterminantalVariety}.
\end{theorem}

\begin{proof}
This follows from Proposition~\ref{prop:SufficientDescentMapAccumulation} and Lemma~\ref{lemma:P2GD--PGDmapSufficientDescentMap}.
\end{proof}

\section{Comparison of $\ppgdr$ and $\ppgd$--$\pgd$ with state-of-the-art methods}
\label{sec:ComparisonWithStateOfTheArt}
In this section, the proposed $\ppgdr$ and $\ppgd$--$\pgd$ are compared with the five state-of-the-art methods reviewed in Section~\ref{subsec:StateOfTheArt} and listed in Table~\ref{tab:OptimizationMethodsDeterminantalVariety}. The comparison is based on the computational cost per iteration, in Section~\ref{subsec:ComparisonComputationalCostPerIteration}, and empirically observed numerical performance on two weighted low-rank approximation problems, in Sections~\ref{subsec:ComparisonWLRAapo} and~\ref{subsec:ComparisonMatrixCompletion}. The first problem is defined in Section~\ref{subsec:P2GDandRFDapocalypsesWLRA}. The second problem is a matrix completion problem. The comparison focuses on the frequently encountered case where $3r \le \min\{m, n\}$.
As in Section~\ref{subsec:StateOfTheArt}, $\hrtr$ is studied with the rank factorization lift
\begin{equation}
\label{eq:RankFactorizationLift}
\varphi : \R^{m \times r} \times \R^{n \times r} \to \R^{m \times n} : (L, R) \mapsto LR^\tp
\end{equation}
from \cite[(1.1)]{LevinKileelBoumal2023}, the hook from \cite[Example~3.11]{LevinKileelBoumal2023}, and the Cauchy step at every iteration. This method minimizes the lifted cost function $g \coloneq f \circ \varphi$.

\subsection{Comparison based on the computational cost per iteration}
\label{subsec:ComparisonComputationalCostPerIteration}
In this section, the respective computational costs per iteration of the seven methods listed in Table~\ref{tab:OptimizationMethodsDeterminantalVariety} are compared based on detailed implementations of these methods involving only evaluations of $f$, $\nabla f$, and $\nabla^2 f$ and some operations from linear algebra:
\begin{enumerate}
\item mult.: matrix multiplication requiring at most $2mnr$ additions or multiplications of real numbers;
\item orth.: computation of an orthonormal basis of the column space of a matrix with $m$ or $n$ rows and at most $r$ columns;
\item small SVD: \emph{small-scale} truncated SVD, i.e., truncated SVD of rank at most $r$ of a matrix whose dimensions are at most $2r$;
\item medium SVD: \emph{medium-scale} compact SVD, i.e., compact SVD of a matrix with $m$ or $n$ rows and at most $r$ columns;
\item large SVD: \emph{large-scale} truncated SVD, i.e., truncated SVD of rank at most $\ushort{r} \in \{1, \dots, r\}$ of a matrix of size at least $m-r+1 \times n-r+1$ and at most $m \times n$;
\item eig.: computation of the smallest eigenvalue and an associated eigenvector of an order-$(m+n)r$ matrix.
\end{enumerate}
In this list, only the (truncated or compact) SVD and the computation of the smallest eigenvalue and an associated eigenvector rely on an iterative method rather than an algorithm with finite complexity such as those for QR factorization.

A part of the comparison was made in \cite[\S 7]{OlikierAbsil2023}, where the aforementioned detailed implementations of the $\ppgd$, $\ppgdr$, $\rfd$, and $\rfdr$ maps are provided. It remains to count matrix multiplications for those algorithms and to count all the above-listed operations for the $\pgd$, $\ppgd$--$\pgd$, and $\hrtr$ maps. Matrix multiplications that do not involve an $m$-by-$n$ matrix, either as a factor or as the product, are not counted. The comparison, summarized in Table~\ref{tab:ComparisonComputationalCostPerIteration}, focuses on the typical case where the input is not $0_{m \times n}$. The inequality $3r \le \min\{m, n\}$ implies that the projection in the right-hand side of~\eqref{eq:ProjTanConeDeterminantalVariety} and~\eqref{eq:ProjResTanConeDeterminantalVariety} requires a large-scale (truncated) SVD since the smaller dimension of the matrix to decompose is $\min\{m, n\}-\ushort{r}$, which is at least $\min\{m, n\}-r+1$ and thus greater than $2r$.
For an input of rank $\ushort{r}$, the counted matrix multiplications require $2mn\ushort{r}$ additions or multiplications of real numbers, if matrix multiplication is performed based on its definition \cite[Table~1.1.2]{GolubVanLoan}. In contrast, if $2r \le \sqrt{\min\{m, n\}}$ (which implies $3r < \min\{m, n\}$), then the uncounted matrix multiplications each require at most $mn$ additions or multiplications of real numbers since those that require the most additions or multiplications of real numbers are between $\max\{m, n\}$-by-$2r$ and $2r$-by-$r$ matrices, and  thus require $4\max\{m, n\}r^2$ additions or multiplications of real numbers.


The respective computational costs of the $\pgd$, $\ppgd$, and $\rfd$ maps depend on the number of iterations performed in the backtracking loop. By \cite[Corollary~6.2]{OlikierWaldspurger}, \eqref{eq:MaxNumIterationsP2GDmap}, and \cite[(11)]{OlikierAbsil2023}, given $X \in \R_{\le r}^{m \times n}$ as input and $\alpha \in (0, \infty)$ as initial step size, this number is upper bounded by
\begin{align}
\label{eq:MaxNumIterationsPGDmap}
&\max\left\{0, \left\lceil\ln\left(\frac{1-c}{\alpha\lip_{\ball[X, 2\alpha\norm{\nabla f(X)}]}(\nabla f)}\right)/\ln\beta\right\rceil\right\} && (\pgd),\\
\label{eq:MaxNumIterationsP2GDmapBis}
&\max\left\{0, \left\lceil\ln\left(\frac{1-c}{\alpha\kappa_{\ball[X, 2\alpha\s(X; f, \R_{\le r}^{m \times n})]}(X; f, \alpha)}\right)/\ln\beta\right\rceil\right\} && (\ppgd),\\
\label{eq:MaxNumIterationsRFDmap}
&\max\left\{0, \left\lceil\ln\left(\frac{2(1-c)}{\alpha\lip_{\ball[X, \alpha\s(X; f, \R_{\le r}^{m \times n})]}(\nabla f)}\right)/\ln\beta\right\rceil\right\} && (\rfd).
\end{align}
In view of the definition of the $\pgd$ map in \cite[Algorithm~4.1]{OlikierWaldspurger}, \eqref{eq:MaxNumIterationsPGDmap} justifies the $\pgd$ row in Table~\ref{tab:ComparisonComputationalCostPerIteration} for the general case where the gradient is not assumed to have a special structure such as sparse or low rank. Similarly, the $\ppgd$ and $\rfd$ rows in Table~\ref{tab:ComparisonComputationalCostPerIteration} follow from \eqref{eq:MaxNumIterationsP2GDmapBis} and \eqref{eq:MaxNumIterationsRFDmap}, as detailed in \cite[\S 7]{OlikierAbsil2023}.

It remains to justify the $\hrtr$ row. Proposition~\ref{prop:GradientHessianLiftedCostFunction} shows that, for every $(L, R) \in \R^{m \times r} \times \R^{n \times r}$, computing $\nabla g(L, R)$ and evaluating $\nabla^2 g(L, R)$ require respectively two and six matrix multiplications.
Moreover, according to the analysis conducted in Appendix~\ref{sec:PracticalImplementationHRTR}, every iteration of $\hrtr$ requires computing the smallest eigenvalue and an associated eigenvector of the order-$(m+n)r$ matrix representing $\nabla^2 g(L, R)$ in a given basis of $\R^{m \times r} \times \R^{n \times r}$, where $(L, R)$ is the current iterate.
Furthermore, every iteration that updates the current iterate requires a hook. The hook from \cite[Example~3.11]{LevinKileelBoumal2023} involves computing a matrix $U \in \st(\rank L, m)$ such that $\im U = \im L$ and a compact SVD of $R(L^\tp U)$, which is a medium-scale compact SVD.

\begin{table}[H]
\begin{center}
{\footnotesize
\begin{tabular}{l*{9}{c}}
\hline
\emph{Method} & $f$ & $\nabla f$ & $\nabla^2 f$ & mult. & orth. & small SVD & medium SVD & large SVD & eig.\\
\hline
\multirow{2}{*}{$\pgd$} & $1$ & \multirow{2}{*}{$1$} & \multirow{2}{*}{$0$} & \multirow{2}{*}{$0$} & \multirow{2}{*}{$0$} & \multirow{2}{*}{$0$} & \multirow{2}{*}{$0$} & $1$ & \multirow{2}{*}{$0$}\\
\cline{2-2}\cline{9-9}
& $1+\eqref{eq:MaxNumIterationsPGDmap}$ & & & & & & & $1+\eqref{eq:MaxNumIterationsPGDmap}$\\
\hline
\multirow{2}{*}{$\ppgd$} & $1$ & \multirow{2}{*}{$1$} & \multirow{2}{*}{$0$} & $2$ & $2$ & $1$ & \multirow{2}{*}{$0$} & $0$ & \multirow{2}{*}{$0$}\\
\cline{2-2}\cline{5-7}\cline{9-9}
& $1+\eqref{eq:MaxNumIterationsP2GDmapBis}$ & & & $4$ & $4$ & $1+\eqref{eq:MaxNumIterationsP2GDmapBis}$ & & $1$\\
\hline
\multirow{2}{*}{$\ppgdr$} & $1$ & $1$ & \multirow{2}{*}{$0$} & $2$ & $2$ & $1$ & \multirow{2}{*}{$0$} & $0$ & \multirow{2}{*}{$0$}\\
\cline{2-3}\cline{5-7}\cline{9-9}
& $1+\eqref{eq:MaxNumIterationsRFDmap} + r(1+\eqref{eq:MaxNumIterationsP2GDmapBis})$ & $r+1$ & & $4r-2$ & $4r-2$ & $r(1+\eqref{eq:MaxNumIterationsP2GDmapBis})$ & & $r$\\
\hline
\multirow{2}{*}{$\rfd$} & $1$ & \multirow{2}{*}{$1$} & \multirow{2}{*}{$0$} & $2$ & \multirow{2}{*}{$1$} & \multirow{2}{*}{$0$} & \multirow{2}{*}{$0$} & $0$ & \multirow{2}{*}{$0$}\\
\cline{2-2}\cline{5-5}\cline{9-9}
& $1+\eqref{eq:MaxNumIterationsRFDmap}$ & & & $4$ & & & & $1$\\
\hline
\multirow{2}{*}{$\rfdr$} & $1$ & $1$ & \multirow{2}{*}{$0$} & $2$ & \multirow{2}{*}{$0$} & \multirow{2}{*}{$0$} & $1$ & $0$ & \multirow{2}{*}{$0$}\\
\cline{2-3}\cline{5-5}\cline{8-9}
& $2(1+\eqref{eq:MaxNumIterationsRFDmap})$ & $2$ & & $6$ & & & $2$ & $1$\\
\hline
\multirow{2}{*}{$\hrtr$} & \multirow{2}{*}{$1$} & \multirow{2}{*}{$1$} & \multirow{2}{*}{$1$} & \multirow{2}{*}{$8$} & $0$ & \multirow{2}{*}{$0$} & $0$ & \multirow{2}{*}{$0$} & \multirow{2}{*}{$1$}\\
\cline{6-6}\cline{8-8}
& & & & & $1$ & & $1$ & \\
\hline
\end{tabular}}
\end{center}
\caption{Operations required by six methods aiming at solving problem~\eqref{eq:OptiDeterminantalVariety} with $r \le \min\{m, n\}/3$ to perform one iteration. The fields ``$f$'', ``$\nabla f$'', and ``$\nabla^2 f$'' respectively correspond to ``evaluation of $f$'', ``evaluation of $\nabla f$'', and ``evaluation of $\nabla^2 f$''. The other fields are defined in the beginning of Section~\ref{subsec:ComparisonComputationalCostPerIteration}; recall that only matrix multiplications involving an $m$-by-$n$ matrix are counted. When two subrows appear in a row, the upper entry corresponds to the best case and the lower one to the worst case. For the ``mult.'', ``orth.'', and ``large SVD'' columns of $\ppgd$ and $\rfd$, the best and worst cases are those where the rank of the input is equal to and smaller than $r$, respectively. $\ppgd$--$\pgd$ is not included in the table because its iteration map calls either the $\ppgd$ map or the monotone $\pgd$ map. An iteration of $\hrtr$ performs no ``orth.'' and no ``small SVD'' if and only if it does not update the iterate.}
\label{tab:ComparisonComputationalCostPerIteration}
\end{table}

\subsection{A family of weighted low-rank approximation problems}
\label{subsec:P2GDandRFDapocalypsesWLRA}
Let $\odot$ denote the entrywise multiplication and, for every $p \in (0, \infty)$, $\cdot^{\odot p}$ the entrywise $p$th power.
Given $A \in \R^{m \times n}$ and $W \in [0, \infty)^{m \times n} \setminus \{0_{m \times n}\}$, this section focuses on problem~\eqref{eq:OptiDeterminantalVariety} with $f : X \mapsto \frac{1}{2} \norm{W^{\odot\frac{1}{2}} \odot (X-A)}^2$, a weighted low-rank approximation (WLRA) problem. Specifically, it is proven that, for certain $A$ and $W$, $\ppgd$ and $\rfd$ can follow apocalypses (in the sense of \cite[Definition~2.7]{LevinKileelBoumal2023}, reviewed in Section~\ref{subsec:StationarityMeasuresOfStationarity}) when applied to that WLRA problem.

Assume that $3r \le \min\{m, n\}$. Let $r_1 \in \{1, \dots, r\}$ and $r_3 \in \{1, \dots, \min\{m, n\}-r\}$. Let $U \in \st(r+r_3, m)$ and $V \in \st(r+r_3, n)$. Let $U_1 \in \st(r_1, m)$, $U_2 \in \st(r-r_1, m)$, and $U_3 \in \st(r_3, m)$ such that $U = [U_1 \; U_2 \; U_3]$. Let $V_1 \in \st(r_1, n)$, $V_2 \in \st(r-r_1, n)$, and $V_3 \in \st(r_3, n)$ such that $V = [V_1 \; V_2 \; V_3]$. Let $S_1 \in \R_{r_1}^{r_1 \times r_1}$, $S_2, A_2 \in \R_{r-r_1}^{r-r_1 \times r-r_1}$, and $A_3 \in \R_{r_3}^{r_3 \times r_3}$. Define
\begin{align*}
X_0 &\coloneq [U_1 \; U_2 \; U_3] \diag(S_1, S_2, 0_{r_3 \times r_3}) [V_1 \; V_2 \; V_3]^\tp \in \R_r^{m \times n},\\
A &\coloneq [U_1 \; U_2 \; U_3] \diag(0_{r_1 \times r_1}, A_2, A_3) [V_1 \; V_2 \; V_3]^\tp \in \R_{r-r_1+r_3}^{m \times n}.
\end{align*}
If $r_1 \ge r_3$, then $\rank A \le r$ and $A$ is a global minimizer of the WLRA problem. For every $p, q \in \N$, let $1_{p \times q}$ denote the all-ones matrix in $\R^{p \times q}$.

\begin{proposition}
\label{prop:P2GDandRFDapocalypse(W)LRA}
Let $\alpha \in (0, 2) \setminus \{1\}$ and $\ushort{X} \coloneq U_2A_2V_2^\tp \in \R_{r-r_1}^{m \times n}$. Assume that either $W \in [0, 1]^{m \times n}$, $U = [I_{r+r_3} \; 0_{r+r_3 \times m-r-r_3}]^\tp$, $V = [I_{r+r_3} \; 0_{r+r_3 \times n-r-r_3}]^\tp$, and $W(r+1\mathord{:}r+r_3, r+1\mathord{:}r+r_3) \odot A_3 \ne 0_{r_3 \times r_3}$ or $W = 1_{m \times n}$. For all $i \in \N$, let
\begin{equation*}
X_i \coloneq (1_{m \times n}-\alpha W)^{\odot i} \odot X_0 + \left(1_{m \times n}-(1_{m \times n}-\alpha W)^{\odot i}\right) \odot \ushort{X} \in \R_{\le r}^{m \times n}.
\end{equation*}
Assume that, for all $i \in \N$, $X_i \in \R_r^{m \times n}$. Then:
\begin{itemize}
\item $(X_i)_{i \in \N}$ is the sequence generated by $\ppgd$ and $\rfd$ when applied to the WLRA problem with $\alpha$ as initial step size for the backtracking procedure, $\beta \in (0, 1)$, and $c \in (0, 1-\frac{\alpha}{2}]$;
\item $(X_i)_{i \in \N}$ converges to $\ushort{X}$;
\item $\lim_{i \to \infty} \s(X_i; f, \R_{\le r}^{m \times n}) = 0$;
\item $-\nabla f(\ushort{X}) = W \odot U_3A_3V_3^\tp \ne 0_{m \times n}$ hence, by \eqref{eq:NormProjTanConeDeterminantalVariety}, $\s(\ushort{X}; f, \R_{\le r}^{m \times n}) > 0$;
\item $\proj{\tancone{\R_{\le r}^{m \times n}}{\ushort{X}}}{-\nabla f(\ushort{X})} = \proj{\restancone{\R_{\le r}^{m \times n}}{\ushort{X}}}{-\nabla f(\ushort{X})} = \proj{\R_{\le r_1}^{m \times n}}{-\nabla f(\ushort{X})}$.
\end{itemize}
\end{proposition}

\begin{proof}
The second and fourth statements are clear. The fifth statement follows from~\eqref{eq:ProjTanConeDeterminantalVariety} and~\eqref{eq:ProjResTanConeDeterminantalVariety}. It remains to prove the first and third statements.

For all $X \in \R^{m \times n}$, $\nabla f(X) = W \odot (X-A)$. Let $W_1 \coloneq W(1\mathord{:}r_1, 1\mathord{:}r_1)$, $W_2 \coloneq W(r_1+1\mathord{:}r, r_1+1\mathord{:}r)$, and $W_3 \coloneq W(r+1\mathord{:}r+r_3, r+1\mathord{:}r+r_3)$. For all $i \in \N$,
\begin{equation*}
X_i = [U_1 \; U_2] \diag((1_{r_1 \times r_1}-\alpha W_1)^{\odot i} \odot S_1, (1_{r-r_1 \times r-r_1}-\alpha W_2)^{\odot i} \odot (S_2-A_2) + A_2) [V_1 \; V_2]^\tp
\end{equation*}
and
\begin{equation*}
\nabla f(X_i) = W \odot \left(U \diag((1_{r_1 \times r_1}-\alpha W_1)^{\odot i} \odot S_1, (1_{r-r_1 \times r-r_1}-\alpha W_2)^{\odot i} \odot (S_2-A_2), -A_3) V^\tp\right).
\end{equation*}
By \eqref{eq:ProjTanConeDeterminantalVariety} and \eqref{eq:ProjResTanConeDeterminantalVariety}, for all $i \in \N$,
\begin{align*}
\proj{\tancone{\R_{\le r}^{m \times n}}{X_i}}{\nabla f(X_i)}
&= \proj{\restancone{\R_{\le r}^{m \times n}}{X_i}}{\nabla f(X_i)}\\
&= W \odot (1_{m \times n}-\alpha W)^{\odot i} \odot \left([U_1 \; U_2] \diag(S_1, S_2-A_2) [V_1 \; V_2]^\tp\right)
\end{align*}
and thus
\begin{align*}
&\s(X_i; f, \R_{\le r}^{m \times n})^2\\
&= \norm{W_1 \odot (1_{r_1 \times r_1}-\alpha W_1)^{\odot i} \odot S_1}^2 + \norm{W_2 \odot (1_{r-r_1 \times r-r_1}-\alpha W_2)^{\odot i} \odot (S_2-A_2)}^2.
\end{align*}
The third statement is proven. Moreover, for all $i \in \N$,
\begin{align*}
X_{i+1} &= X_i - \alpha \proj{\tancone{\R_{\le r}^{m \times n}}{X_i}}{\nabla f(X_i)},\\
f(X_i)-f(X_{i+1})
&\ge \left(1-\frac{\alpha}{2}\right) \alpha \s(X_i; f, \R_{\le r}^{m \times n})^2 \ge c \alpha \s(X_i; f, \R_{\le r}^{m \times n})^2,
\end{align*}
which completes the proof.
\end{proof}

\subsection{Empirical comparison on a weighted low-rank approximation problem}
\label{subsec:ComparisonWLRAapo}
In this section, the six first-order methods from Table~\ref{tab:OptimizationMethodsDeterminantalVariety} are compared empirically on the WLRA problem from Section~\ref{subsec:P2GDandRFDapocalypsesWLRA}, with $r_1 = r_3$ to ensure that $A \in \R_r^{m \times n}$ is a global minimizer. The parameters of the problem are chosen as $m \coloneq 600$, $n \coloneq 400$, $r \coloneq 15$, and $r_1 \coloneq 10$, and those of the methods as $\ushort{\alpha} \coloneq \oshort{\alpha} \coloneq 0.8$, $\beta \coloneq 0.5$, $c \coloneq 0.1$, and $\Delta \in \{0.01, 0.1\}$. The initial iterate $X_0$ is the one defined in Section~\ref{subsec:P2GDandRFDapocalypsesWLRA}. The numerical experiments presented in this paper were performed in MATLAB R2025b on a laptop computer with an Apple M4 Pro chip featuring an 14-core CPU, a 20-core GPU, and 48 GB of memory.\footnote{The code is available at \url{https://github.com/golikier/LowRankOptimization}.} One hundred instances of the WLRA problem were generated randomly as follows:
\begin{itemize}
\item $A_2$ and $A_3$ are drawn from the standard normal distribution;
\item $W$ is drawn from the uniform distribution in the interval $(0, 1)$;
\item $S_1$ and $S_2$ are diagonal with monotonically nonincreasing elements drawn from the uniform distribution in the interval $(0, 1)$.
\end{itemize}

The number of instances on which the objective function was brought below $10^{-15}$ is represented as a function of time for each method in Figure~\ref{fig:ComparisonWLRAapo}. The number of instances on which the function was not brought below $10^{-15}$ within one minute is given in Table~\ref{tab:ComparisonWLRAapo}, which also gives the function value, measure of B-stationarity~\eqref{eq:NormProjectionNegativeGradientOntoTangentCone}, and $r$th singular value obtained after one minute on those instances.

\begin{figure}[H]
\begin{center}
\includegraphics[scale=0.93]{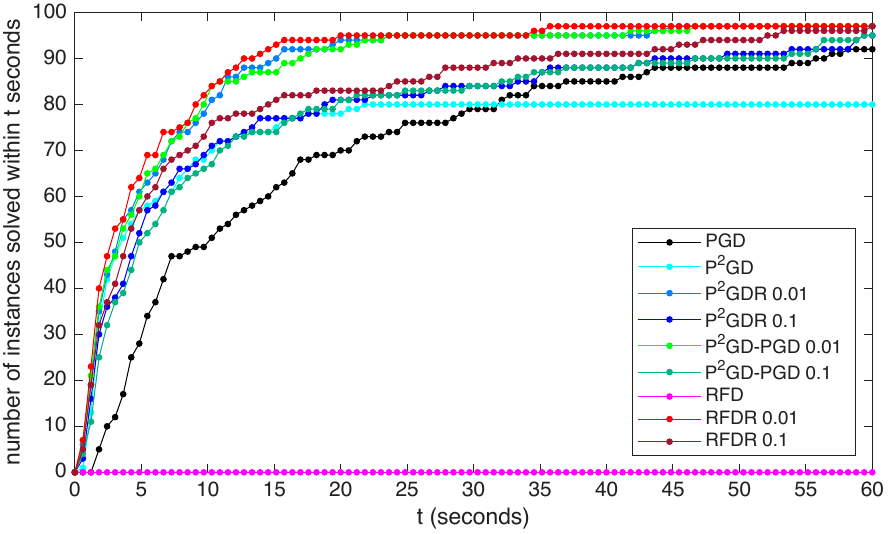}
\vspace*{-7mm}
\end{center}
\caption{Number of instances on which the objective function was brought below $10^{-15}$ as a function of time for the six first-order methods from Table~\ref{tab:OptimizationMethodsDeterminantalVariety} on $100$ randomly generated instance of the WLRA problem from Section~\ref{subsec:P2GDandRFDapocalypsesWLRA}. $\rfdr$ with $\Delta = 0.01$ performs best, followed by $\ppgdr$ and $\ppgd$--$\pgd$ both with $\Delta = 0.01$, which outperform $\ppgd$. $\ppgd$ is outperformed by all other methods except $\rfd$ after a bit more than $30$ seconds. $\rfd$ fails to bring the objective function below $10^{-15}$ on all instances. In this experiment, $\Delta = 0.01$ works better than $\Delta = 0.1$.}
\label{fig:ComparisonWLRAapo}
\end{figure}

In Table~\ref{tab:ComparisonWLRAapo}, the rows of $\ppgd$ and $\rfd$ contrast with those of the other methods: larger function value, smaller B-stationarity measure \eqref{eq:NormProjectionNegativeGradientOntoTangentCone}, and smaller $r$th singular value. This indicates that, on respectively twenty and all instances, $\ppgd$ and $\rfd$ follow the apocalypse described in Proposition~\ref{prop:P2GDandRFDapocalypse(W)LRA}. The objective function and B-stationarity measure \eqref{eq:NormProjectionNegativeGradientOntoTangentCone} are represented as functions of time for one of those twenty instances in Figure~\ref{fig:ComparisonWLRAapo70}. In contrast, the other methods converge to the global minimizer $A$ on every instance, even if they need more than one minute to bring the function below $10^{-15}$ on a few instances. For a given $\Delta$, the proposed $\ppgdr$ and $\ppgd$--$\pgd$ converge more slowly than $\rfdr$ but faster than $\pgd$, thereby confirming the computational advantage of relying on the $\ppgd$ map instead of the monotone $\pgd$ map.

\begin{table}[H]
\begin{center}
{\footnotesize
\begin{tabular}{*{8}{l}}
\hline
\emph{Method} & \emph{Failures} & \emph{Min. fun.} & \emph{Max. fun.} & \emph{Min. B} & \emph{Max. B} & \emph{Min. $\sigma_r$} & \emph{Max. $\sigma_r$}\\[1pt]
\hline
$\pgd$ & 8 & $1.2\cdot10^{-15}$ & $1.2\cdot10^{-6}$ & $1.2\cdot10^{-9}$ & $3.3\cdot10^{-6}$ & $6.5\cdot10^{-3}$ & $0.17$\\[1pt]
\hline
$\ppgd$ & 20 & $19.95$ & $36.09$ & $2.7\cdot10^{-14}$ & $5.9\cdot10^{-14}$ & $2.1\cdot10^{-93}$ & $1.4\cdot10^{-55}$\\[1pt]
\hline
$\ppgdr$ with $\Delta = 0.01$ & 3 & $1.2\cdot10^{-15}$ & $8.5\cdot10^{-7}$ & $7.3\cdot10^{-10}$ & $1.5\cdot10^{-6}$ & $6.5\cdot10^{-3}$ & $0.17$\\[1pt]
\hline
$\ppgdr$ with $\Delta = 0.1$ & 5 & $2.4\cdot10^{-15}$ & $7.3\cdot10^{-7}$ & $1.0\cdot10^{-9}$ & $1.3\cdot10^{-6}$ & $6.5\cdot10^{-3}$ & $0.17$\\[1pt]
\hline
$\ppgd$--$\pgd$ with $\Delta = 0.01$ & 3 & $2.5\cdot10^{-14}$ & $1.3\cdot10^{-6}$ & $3.3\cdot10^{-9}$ & $1.8\cdot10^{-6}$ & $6.5\cdot10^{-3}$ & $0.17$\\[1pt]
\hline
$\ppgd$--$\pgd$ with $\Delta = 0.1$ & 5 & $3.7\cdot10^{-15}$ & $1.2\cdot10^{-6}$ & $1.3\cdot10^{-9}$ & $1.7\cdot10^{-6}$ & $6.5\cdot10^{-3}$ & $0.17$\\[1pt]
\hline
$\rfd$ & 100 & $16.31$ & $36.09$ & $9.1\cdot10^{-16}$ & $5.1\cdot10^{-14}$ & $2.3\cdot10^{-64}$ & $1.9\cdot10^{-35}$\\[1pt]
\hline
$\rfdr$ with $\Delta = 0.01$ & 3 & $1.7\cdot10^{-15}$ & $8.9\cdot10^{-7}$ & $8.5\cdot10^{-10}$ & $1.5\cdot10^{-6}$ & $6.5\cdot10^{-3}$ & $0.17$\\[1pt]
\hline
$\rfdr$ with $\Delta = 0.1$ & 3 & $1.5\cdot10^{-15}$ & $4.1\cdot10^{-7}$ & $1.3\cdot10^{-9}$ & $1.0\cdot10^{-6}$ & $6.5\cdot10^{-3}$ & $0.057$\\[1pt]
\hline
\end{tabular}}
\vspace*{-4mm}
\end{center}
\caption{Performance of the six first-order methods from Table~\ref{tab:OptimizationMethodsDeterminantalVariety} on the instances of the WLRA problem from Section~\ref{subsec:P2GDandRFDapocalypsesWLRA} on which they failed to bring the objective function below $10^{-15}$ within one minute: number of those instances and function value, B-stationarity measure~\eqref{eq:NormProjectionNegativeGradientOntoTangentCone}, and $r$th singular value obtained after one minute on those instances. In contrast with the other methods, $\ppgd$ and $\rfd$ fail because they follow the apocalypse described in Proposition~\ref{prop:P2GDandRFDapocalypse(W)LRA}. Indeed, on the instances on which they fail: $\ppgd$ and $\rfd$ never bring the objective function below $16$, while the other methods always bring it below $1.3\cdot10^{-6}$; $\ppgd$ and $\rfd$ bring the B-stationarity measure~\eqref{eq:NormProjectionNegativeGradientOntoTangentCone} below $6\cdot10^{-14}$, while the other methods never bring it below $7.3\cdot10^{-10}$; $\ppgd$ and $\rfd$ bring the $r$th singular value below $1.9\cdot10^{-35}$, while the other methods never bring it below $6.4\cdot10^{-3}$.}
\label{tab:ComparisonWLRAapo}
\end{table}

\begin{figure}[H]
\begin{center}
\vspace*{-5mm}
\includegraphics[scale=1]{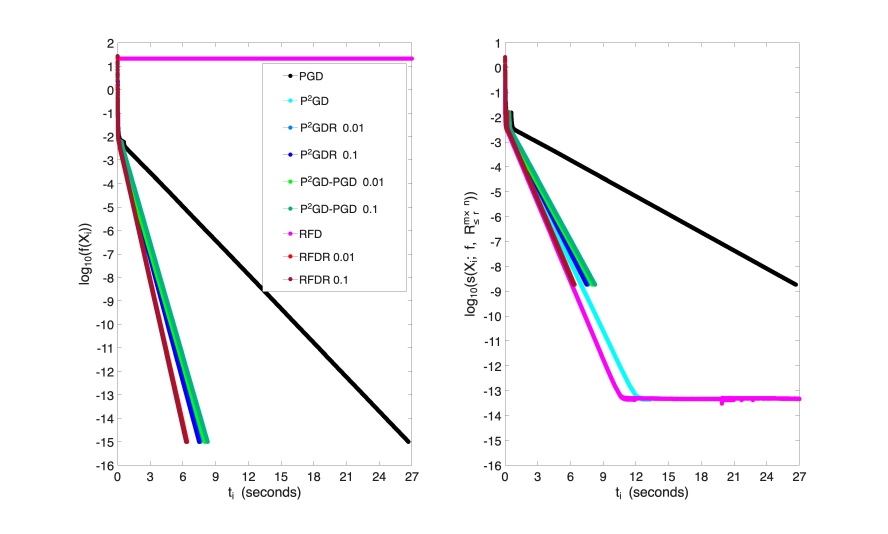}
\vspace*{-11mm}
\end{center}
\caption{Objective function and B-stationarity measure \eqref{eq:NormProjectionNegativeGradientOntoTangentCone} as functions of time for the six first-order methods from Table~\ref{tab:OptimizationMethodsDeterminantalVariety} on the 70th randomly generated instance of the WLRA problem from Section~\ref{subsec:P2GDandRFDapocalypsesWLRA}. Among the 17 instances on which both $\ppgd$ and $\rfd$ fail to bring the objective function below $10^{-15}$ within one minute but the other methods succeed, the ratio of the running time of $\ppgdr$, $\ppgd$--$\pgd$, or $\rfdr$ with a value of $\Delta$ to that with the other value of $\Delta$ is smaller than $1.5$ on 10 instances and greater than $2.8$ on the other 7 instances. Among those 10 instances, the 70th instance is the one on which each succeeding method reaches its maximal running time. On this instance, $\rfdr$ is the first method to reach the target value of $10^{-15}$ (within about $6.3$ seconds), followed by $\ppgdr$ (within about $7.5$ seconds), $\ppgd$--$\pgd$ (within about $8.1$ seconds), and $\pgd$ (within about $26.7$ seconds), while $\ppgd$ and $\rfd$ follow the apocalypse described in Proposition~\ref{prop:P2GDandRFDapocalypse(W)LRA} and never produce a value below $21$. All methods except $\ppgd$ and $\rfd$ performed between 11 091 and 11 125 iterations. With $\Delta = 0.01$, $\ppgdr$ and $\rfdr$ considered and used respectively $10$ and $11$ rank reductions, while $\ppgd$--$\pgd$ called the monotone $\pgd$ map only twice. With $\Delta = 0.1$, $\ppgdr$ and $\rfdr$ considered respectively $35$ and $33$ rank reductions and used respectively $12$ and $11$ of them, while $\ppgd$--$\pgd$ called the monotone $\pgd$ map $133$ times. Thus, except for a few iterations, the computational cost per iteration of these methods is the same as that of $\ppgd$ or $\rfd$. Recall that the B-stationarity measure \eqref{eq:NormProjectionNegativeGradientOntoTangentCone} is not lower semicontinuous.}
\label{fig:ComparisonWLRAapo70}
\end{figure}

\newpage

In exact arithmetic, $\ppgd$ and $\rfd$ each generate the sequence defined in Proposition~\ref{prop:P2GDandRFDapocalypse(W)LRA}, along which the $r$th singular value converges to $0$. However, Figures~\ref{fig:ComparisonWLRAapo}--\ref{fig:ComparisonWLRAapo52} indicate that, empirically, the two methods markedly differ in terms of success rate and speed. This can be explained as follows.
The implemented $\ppgd$ and $\rfd$ can be thought of as implementations of $\ppgdr$ and $\rfdr$, respectively, with $\Delta$ the smallest positive number that can be represented in binary64 ($\Delta = 2^{-1074} \approx 4.94\cdot10^{-324}$). Moreover, on $80$ instances, the $r$th singular value of the empirical iterates of $\ppgd$ became equal to $0$ within one minute---triggering the apocalypse-avoiding rank reduction mechanism of $\ppgdr$ albeit after a large number of iterations---whereas, on all instances, the $r$th singular value of every iterate generated by $\rfd$ remained greater than $10^{-74}$. This explains the different success rates. Furthermore, as long as their $r$th singular value is nonzero, the empirical iterates of $\ppgd$ and $\rfd$ are very close (Frobenius distance below $10^{-13}$ on the instances represented in Figures~\ref{fig:ComparisonWLRAapo70} and~\ref{fig:ComparisonWLRAapo52}) to the sequence defined in Proposition~\ref{prop:P2GDandRFDapocalypse(W)LRA}. The timings differ (as observable in the right-hand plots of Figures~\ref{fig:ComparisonWLRAapo70} and~\ref{fig:ComparisonWLRAapo52}) because $\ppgd$ has a higher computational cost per iteration than $\rfd$, as recorded in Table~\ref{tab:ComparisonComputationalCostPerIteration}.

\begin{figure}[H]
\begin{center}
\includegraphics[scale=1]{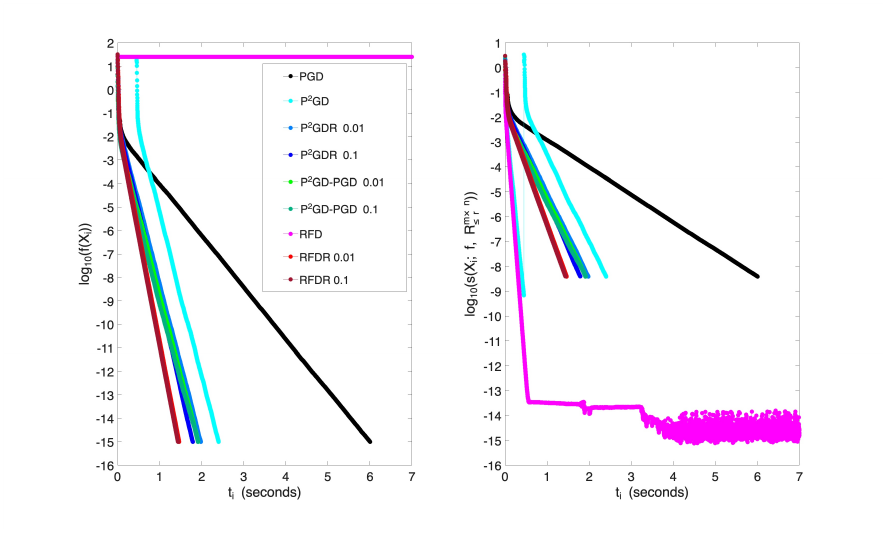}
\vspace*{-3mm}
\end{center}
\caption{Objective function and B-stationarity measure \eqref{eq:NormProjectionNegativeGradientOntoTangentCone} as functions of time for the six first-order methods from Table~\ref{tab:OptimizationMethodsDeterminantalVariety} on the 52nd randomly generated instance of the WLRA problem from Section~\ref{subsec:P2GDandRFDapocalypsesWLRA}. Among the 75 instances on which all methods except $\rfd$ bring the objective function below $10^{-15}$ within one minute, $\ppgd$ does so slower than $\ppgdr$, $\ppgd$--$\pgd$, and $\rfdr$ (with both values of $\Delta$) on 13 instances. Among those 13 instances, the 52nd instance is one where the time gap between $\ppgd$ and $\ppgdr$, $\ppgd$--$\pgd$, and $\rfdr$ is large and the influence of $\Delta$ on the running time is small. On this instance, $\rfdr$ is the first method to reach the target value of $10^{-15}$ (within about $1.5$ seconds), followed by $\ppgdr$ and $\ppgd$--$\pgd$ (within about $1.8$ seconds), $\ppgd$ (within about $2.4$ seconds), and $\pgd$ (within about $6$ seconds), while $\rfd$ follows the apocalypse described in Proposition~\ref{prop:P2GDandRFDapocalypse(W)LRA} and never produces a value below $25$. All methods except $\ppgd$ and $\rfd$ performed between 2438 and 2550 iterations. $\ppgd$ performed 3140 iterations. With $\Delta = 0.01$, $\ppgdr$ and $\rfdr$ considered and used respectively $1$ and $11$ rank reductions, while $\ppgd$--$\pgd$ called the monotone $\pgd$ map only twice. With $\Delta = 0.1$, $\ppgdr$ and $\rfdr$ considered $12$ rank reductions and used $11$ of them, while $\ppgd$--$\pgd$ called the monotone $\pgd$ map $5$ times. Thus, except for a few iterations, the computational cost per iteration of these methods is the same as that of $\ppgd$ or $\rfd$. The sequence generated by $\ppgd$ illustrates the lack of lower semicontinuity of the B-stationarity measure \eqref{eq:NormProjectionNegativeGradientOntoTangentCone}.}
\label{fig:ComparisonWLRAapo52}
\end{figure}

$\hrtr$ was not included in the comparison because, on the eleventh instance, for example, within $30$ minutes, it brought the objective function only at $6.06\cdot10^{-3}$. On this instance, $\pgd$ brought the objective function below $10^{-15}$ within $1.7$ seconds. Thus, on this instance, $\hrtr$ is at least $1000$ times slower than $\pgd$.

\subsection{Empirical comparison on a matrix completion problem}
\label{subsec:ComparisonMatrixCompletion}
In this section, the six first-order methods from Table~\ref{tab:OptimizationMethodsDeterminantalVariety} are compared empirically on a matrix completion problem similar to that presented in \cite[\S 3.4]{SchneiderUschmajew2015}, namely problem~\eqref{eq:OptiDeterminantalVariety} with $f : X \mapsto \frac{1}{2} \norm{W^{\odot\frac{1}{2}} \odot (X-A)}^2$, $A \in \R_r^{m \times n}$ to ensure that $A$ is a global minimizer, and $W \in \{0, 1\}^{m \times n} \setminus \{0_{m \times n}\}$. The parameters of the problem are chosen as $m \coloneq 450$, $n \coloneq 300$, and $r \coloneq 15$, and those of the methods as $\ushort{\alpha} \coloneq \oshort{\alpha} \coloneq 0.8$, $\beta \coloneq 0.5$, $c \coloneq 0.1$, and $\Delta \in \{0.01, 0.1\}$. The initial iterate $X_0$ is chosen in $\proj{\R_{\le r}^{m \times n}}{-\nabla f(0_{m \times n})}$. One hundred instances of the matrix completion problem were generated randomly as follows:
\begin{itemize}
\item $A \coloneq U_A \Sigma_A V_A^\tp$, where $\Sigma_A$ is an $r$-by-$r$ diagonal matrix whose entries are drawn from the uniform distribution in the interval $(0, 1)$ and $U_A \in \st(r, m)$ and $V_A \in \st(r, n)$ are the Q factors in the QR factorizations of an $m$-by-$r$ matrix and an $n$-by-$r$ matrix both drawn from the standard normal distribution;
\item $W$ contains $mn/20 < \dim \R_r^{m \times n} = (m+n-r)r$ ones at positions randomly selected, and its other entries are zero.
\end{itemize}

The running time and number of iterations needed by each method to bring the objective function below $10^{-15}$ on the $100$ instances are given in Table~\ref{tab:ComparisonTimeMatrixCompletion}. For the first instance, the objective function and the measure of B-stationarity \eqref{eq:NormProjectionNegativeGradientOntoTangentCone} are represented as functions of time in Figure~\ref{fig:PlotMatrixCompletion1}.

In this experiment, the six methods can be divided into three groups based on the running time. The fastest group, with a median running time close to $5$ seconds, contains $\ppgd$, $\ppgdr$, and $\ppgd$--$\pgd$. The second group, with a median running time close to $8$ seconds, contains $\rfd$ and $\rfdr$. The third group, with a median running time close to $11$ seconds, contains $\pgd$. The three groups are observable in Figure~\ref{fig:PlotMatrixCompletion1}.

\begin{figure}[H]
\begin{center}
\includegraphics[scale=1]{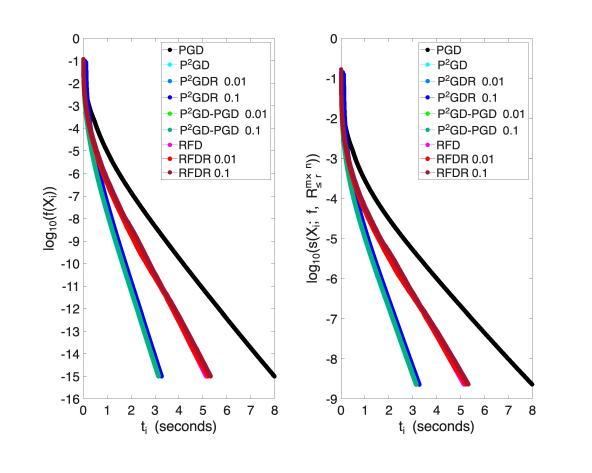}
\vspace*{-7mm}
\end{center}
\caption{Objective function and B-stationarity measure \eqref{eq:NormProjectionNegativeGradientOntoTangentCone} as functions of time for the six first-order methods from Table~\ref{tab:OptimizationMethodsDeterminantalVariety} on the first randomly generated instance of the matrix completion problem from Section~\ref{subsec:ComparisonMatrixCompletion}. For each method, the running time is smaller than the minimum running time in Table~\ref{tab:ComparisonTimeMatrixCompletion}. A plausible reason is that, in the experiment reported in Table~\ref{tab:ComparisonTimeMatrixCompletion}, it is verified at every iteration whether the objective function is smaller than $10^{-15}$. In contrast, this figure was obtained by performing the number of iterations needed to bring the objective function below $10^{-15}$; that number was computed in the experiment reported in Table~\ref{tab:ComparisonTimeMatrixCompletion}. On this instance, the $r$th singular value of all iterates generated by $\ppgd$ and $\rfd$ is greater than $0.01$. Thus, with $\Delta = 0.01$, $\ppgdr$ and $\ppgd$--$\pgd$ generated the same sequence as $\ppgd$, and $\rfdr$ generated the same sequence  as $\rfd$. Moreover, no rank reduction was considered by $\ppgdr$ or $\rfdr$. In contrast, with $\Delta = 0.1$, $\ppgdr$ and $\rfdr$ considered respectively $24$ and $2$ rank reductions but used none of them, thereby generating the same sequences as $\ppgd$ and $\rfd$, respectively. $\ppgd$--$\pgd$ used the monotone $\pgd$ map only twice.}
\label{fig:PlotMatrixCompletion1}
\end{figure}

\begin{table}[H]
\begin{center}
{\footnotesize
\begin{tabular}{*{7}{l}}
\hline
\emph{Method} & \emph{Min. time} & \emph{Median time} & \emph{Max. time} & \emph{Min. iter.} & \emph{Median iter.} & \emph{Max. iter.}\\[1pt]
\hline
$\pgd$ & 9.35 & 10.99 & 20.67 & 5192 & 6100 & 11 063\\[1pt]
\hline
$\ppgd$ & 3.94 & 5.15 & 9.66 & 4938 & 5929 & 11 153\\[1pt]
\hline
$\ppgdr$ with $\Delta = 0.01$ & 3.93 & 5.18 & 9.53 & 4938 & 5929 & 11 153\\[1pt]
\hline
$\ppgdr$ with $\Delta = 0.1$ & 4.24 & 5.38 & 9.71 & 4938 & 5929 & 11 153\\[1pt]
\hline
$\ppgd$--$\pgd$ with $\Delta = 0.01$ & 4.02 & 5.26 & 9.65 & 4938 & 5929 & 11 153\\[1pt]
\hline
$\ppgd$--$\pgd$ with $\Delta = 0.1$ & 4.01 & 5.18 & 8.89 & 5231 & 6040.5 & 11 309\\[1pt]
\hline
$\rfd$ & 5.99 & 7.99 & 12.96 & 9551 & 11 950.5 & 19 192\\[1pt]
\hline
$\rfdr$ with $\Delta = 0.01$ & 6.09 & 8.17 & 13.73 & 9551 & 11 950.5 & 19 192\\[1pt]
\hline
$\rfdr$ with $\Delta = 0.1$ & 6.14 & 8.21 & 13.72 & 9551 & 11 988 & 20 194\\[1pt]
\hline
\end{tabular}}
\vspace*{-2mm}
\end{center}
\caption{Running time (in seconds) and number of iterations needed by the six first-order methods from Table~\ref{tab:OptimizationMethodsDeterminantalVariety} to bring the objective function below $10^{-15}$ on the $100$ randomly generated instances of the matrix completion problem from Section~\ref{subsec:ComparisonMatrixCompletion}.}
\label{tab:ComparisonTimeMatrixCompletion}
\end{table}

$\hrtr$ was not included in the comparison because, on the first instance, for example, within $30$ minutes, it brought the objective function only at $2.14\cdot10^{-3}$. On this instance, $\pgd$ and $\ppgd$ brought the objective function below $10^{-15}$ within respectively $10$ and $5$ seconds. Thus, on this instance, $\hrtr$ is at least $180$ times slower than $\pgd$ and at least $360$ times slower than $\ppgd$.

\section{Conclusion}
\label{sec:Conclusion}
This paper proposes two first-order methods, called $\ppgdr$ and $\ppgd$--$\pgd$, that each generate a sequence in $\R_{\le r}^{m \times n}$ whose accumulation points are B-stationary for~\eqref{eq:OptiDeterminantalVariety} in the sense of Definition~\ref{def:M/B-Stationarity}. For problem~\eqref{eq:OptiDeterminantalVariety}, B-stationarity is the strongest necessary condition for local optimality (see Section~\ref{subsec:StationarityNotionsDeterminantalVariety}), and only a handful of methods in the literature provably enjoy this property (see Table~\ref{tab:OptimizationMethodsDeterminantalVariety}). A unique aspect of the proposed $\ppgdr$ is that it achieves this feat by only resorting to the classic and computationally cheap $\ppgd$ map combined with conditional rank reductions. Furthermore, the framework proposed in Section~\ref{sec:SufficientDescentMaps} makes it possible to produce hybrid methods that also accumulate at B-stationary points of~\eqref{eq:OptiDeterminantalVariety}. Among them, the $\ppgd$--$\pgd$ method introduced in Section~\ref{sec:P2GD--PGD} involves no rank reduction mechanism and combines two classic iteration maps, exploiting the lower computational cost of $\ppgd$ in comparison with $\pgd$ while preserving $\pgd$'s property of accumulating at B-stationary points.

The comparison conducted in Section~\ref{sec:ComparisonWithStateOfTheArt} shows that only $\rfdr$ can compete with the proposed $\ppgdr$ and $\ppgd$--$\pgd$. It has a lower worst-case computational cost per iteration, although the typical computational costs per iteration are comparable, and performs slightly better on the WLRA problem considered in Section~\ref{subsec:ComparisonWLRAapo}. However, $\ppgdr$ and $\ppgd$--$\pgd$ perform better than $\rfdr$ on the matrix completion problem studied in Section~\ref{subsec:ComparisonMatrixCompletion}. Moreover, due to their more streamlined design that does not require a restricted tangent cone, $\ppgdr$ and $\ppgd$--$\pgd$ are better candidates to extend to feasible sets where no such cone is known, such as the subset of $\R_{\le r}^{n \times n}$ containing the symmetric positive-semidefinite matrices, which appears notably in relaxations of combinatorial optimization problems; see, e.g., \cite{BurerMonteiro2003,BurerMonteiro2005,JourneeBachAbsilSepulchre2010,JiaEtAl} and references therein.

It is known (see Table~\ref{tab:OptimizationMethodsDeterminantalVariety}) that popular methods such as $\ppgd$ and $\rfd$ fail to accumulate at B-stationary points on certain instances of~\eqref{eq:OptiDeterminantalVariety}. The numerical experiments on a WLRA problem reported in Section~\ref{subsec:ComparisonWLRAapo} show that there exist problem instances and initial iterates such that $\ppgd$ and $\rfd$ are dramatically outperformed by the new methods. An important, yet loosely formulated, open question is how often such performance gaps arise in practice. In any case, methods such as the proposed $\ppgdr$ and $\ppgd$--$\pgd$ are of great interest because they provably accumulate at B-stationary points while often incurring small overhead compared with $\ppgd$ in practice, as observed in Section~\ref{subsec:ComparisonMatrixCompletion}.

\appendix

\section{Practical implementation of $\hrtr$}
\label{sec:PracticalImplementationHRTR}
This section details the implementation of $\hrtr$ with the rank factorization lift \eqref{eq:RankFactorizationLift} from \cite[(1.1)]{LevinKileelBoumal2023}. The lifted cost function is $g \coloneq f \circ \varphi$.
It is assumed that $\nabla f : \R^{m \times n} \to \R^{m \times n}$ is continuously differentiable, and $\nabla^2 f$ denotes the Hessian of~$f$, defined as the derivative of $\nabla f$ \cite[\S 5.5]{AbsilMahonySepulchre}, i.e., $\nabla^2 f : \R^{m \times n} \to \mathcal{L}(\R^{m \times n}) : X \mapsto (\nabla f)'(X)$, where $\mathcal{L}(\R^{m \times n})$ denotes the Banach space of all (continuous) linear operators on $\R^{m \times n}$.

\begin{proposition}
\label{prop:GradientHessianLiftedCostFunction}
For all $(L, R), (\dot{L}, \dot{R}) \in \R^{m \times r} \times \R^{n \times r}$,
\begin{align*}
\nabla g(L, R)
=~& (\nabla f(LR^\tp) R, \nabla f(LR^\tp)^\tp L),\\
\nabla^2 g(L, R)(\dot{L}, \dot{R})
=~& (\nabla^2 f(LR^\tp)(\dot{L}R^\tp+L\dot{R}^\tp)R+\nabla f(LR^\tp)\dot{R},\\
&\hphantom{(}\nabla^2 f(LR^\tp)(\dot{L}R^\tp+L\dot{R}^\tp)^\tp L+\nabla f(LR^\tp)^\tp\dot{L}).
\end{align*}
\end{proposition}

\begin{proof}
It holds that
\begin{equation*}
\varphi'(L, R)(\dot{L}, \dot{R}) = \dot{L}R^\tp+L\dot{R}^\tp.
\end{equation*}
By the chain rule,
\begin{equation*}
g'(L, R) = f'(\varphi(L, R)) \circ \varphi'(L, R).
\end{equation*}
Thus,
\begin{align*}
g'(L, R)(\dot{L}, \dot{R})
&= f'(\varphi(L, R))(\varphi'(L, R)(\dot{L}, \dot{R}))\\
&= \ip{\nabla f(\varphi(L, R))}{\varphi'(L, R)(\dot{L}, \dot{R})}\\
&= \ip{\nabla f(\varphi(L, R))}{\dot{L}R^\tp} + \ip{\nabla f(\varphi(L, R))}{L\dot{R}^\tp}\\
&= \ip{\nabla f(\varphi(L, R))R}{\dot{L}} + \ip{\nabla f(\varphi(L, R))^\tp L}{\dot{R}}\\
&= \ip{(((\nabla f) \circ \varphi)(L, R)R, ((\nabla f) \circ \varphi)(L, R)^\tp L)}{(\dot{L}, \dot{R})}
\end{align*}
and hence
\begin{equation*}
\nabla g(L, R) = (((\nabla f) \circ \varphi)(L, R)R, ((\nabla f) \circ \varphi)(L, R)^\tp L).
\end{equation*}
The formula for $\nabla^2 g(L, R)(\dot{L}, \dot{R})$ follows from the product rule and the chain rule.
\end{proof}

Given $(L, R) \in \R^{m \times r} \times \R^{n \times r}$, the eigenvalues of $\nabla^2 g(L, R)$, which is a self-adjoint linear operator on $\R^{m \times r} \times \R^{n \times r}$, are the eigenvalues of the matrix representing it in any basis of $\R^{m \times r} \times \R^{n \times r}$. Here, we use the orthonormal basis formed by the concatenation of the sequences $((\delta_{p,l})_{i,k=1}^{m,r}, 0_{n \times r})_{p,l=1}^{m,r}$ and $(0_{m \times r}, (\delta_{q,l})_{j,k=1}^{n,r})_{q,l=1}^{n,r}$. The matrix $H(L, R)$ representing $\nabla^2 g(L, R)$ in that basis can be formed as follows. For every $(i,k) \in \{1, \dots, m\} \times \{1, \dots, r\}$, the $(i-1)r+k$th column of $H(L, R)$ is the vector formed by concatenating the rows of $\nabla^2 f(LR^\tp)((\delta_{i,k})_{p,l=1}^{m,r}R^\tp)R$ and those of $\nabla^2 f(LR^\tp)((\delta_{i,k})_{p,l=1}^{m,r}R^\tp)^\tp L+\nabla f(LR^\tp)^\tp(\delta_{i,k})_{p,l=1}^{m,r}$. Then, for every $(j,k) \in \{1, \dots, n\} \times \{1, \dots, r\}$, the $mr+(j-1)r+k$th column of $H(L, R)$ is the vector formed by concatenating the rows of $\nabla^2 f(LR^\tp)(L{(\delta_{j,k})_{q,l=1}^{n,r}}^\tp)R+\nabla f(LR^\tp)(\delta_{j,k})_{q,l=1}^{n,r}$ and those of $\nabla^2 f(LR^\tp)(L{(\delta_{j,k})_{q,l=1}^{n,r}}^\tp)^\tp L$.

\section*{Acknowledgment}
The authors thank two anonymous reviewers for several helpful comments that improved the quality of the paper.

\bibliographystyle{plainurl}
\bibliography{golikier_bib_abbrv}

\begin{thebibliography}{10}

\bibitem{AbernethyBachEvgeniouVert}
J.~Abernethy, F.~Bach, T.~Evgeniou, and J.-P. Vert.
\newblock A new approach to collaborative filtering: operator estimation with
  spectral regularization.
\newblock {\em J. Mach. Learn. Res.}, 10(29):803--826, 2009.
\newblock URL: \url{http://jmlr.org/papers/v10/abernethy09a.html}.

\bibitem{AbsilMahonySepulchre}
P.-A. Absil, R.~Mahony, and R.~Sepulchre.
\newblock {\em Optimization Algorithms on Matrix Manifolds}.
\newblock Princeton University Press, Princeton, NJ, 2008.
\newblock URL:
  \url{https://press.princeton.edu/books/hardcover/9780691132983/optimization-algorithms-on-matrix-manifolds}.

\bibitem{ArbarelloCornalbaGriffithsHarris}
E.~Arbarello, M.~Cornalba, P.~A. Griffiths, and J.~Harris.
\newblock {\em Geometry of Algebraic Curves: Volume I}, volume 267 of {\em
  Grundlehren Math. Wiss.}
\newblock Springer, New York, NY, 1985.
\newblock \href {https://doi.org/10.1007/978-1-4757-5323-3}
  {\path{doi:10.1007/978-1-4757-5323-3}}.

\bibitem{BotDaoLi}
R.~I. Bo\c{t}, M.~N. Dao, and G.~Li.
\newblock Extrapolated proximal subgradient algorithms for nonconvex and
  nonsmooth fractional programs.
\newblock {\em Math. Oper. Res.}, 47(3):2415--2443, August 2022.
\newblock \href {https://doi.org/10.1287/moor.2021.1214}
  {\path{doi:10.1287/moor.2021.1214}}.

\bibitem{BrunsVetter}
W.~Bruns and U.~Vetter.
\newblock {\em Determinantal Rings}, volume 1327 of {\em Lecture Notes in
  Math.}
\newblock Springer, Berlin, Heidelberg, 1988.
\newblock \href {https://doi.org/10.1007/BFb0080378}
  {\path{doi:10.1007/BFb0080378}}.

\bibitem{BurerMonteiro2003}
S.~Burer and R.~Monteiro.
\newblock A nonlinear programming algorithm for solving semidefinite programs
  via low-rank factorization.
\newblock {\em Math. Program.}, 95(2):329--357, February 2003.
\newblock \href {https://doi.org/10.1007/s10107-002-0352-8}
  {\path{doi:10.1007/s10107-002-0352-8}}.

\bibitem{BurerMonteiro2005}
S.~Burer and R.~Monteiro.
\newblock Local minima and convergence in low-rank semidefinite programming.
\newblock {\em Math. Program.}, 103(3):427--444, July 2005.
\newblock \href {https://doi.org/10.1007/s10107-004-0564-1}
  {\path{doi:10.1007/s10107-004-0564-1}}.

\bibitem{CarlssonGerosaOlsson}
M.~Carlsson, D.~Gerosa, and C.~Olsson.
\newblock An unbiased approach to low rank recovery.
\newblock {\em SIAM J. Optim.}, 32(4):2969--2996, 2022.
\newblock \href {https://doi.org/10.1137/19M1294800}
  {\path{doi:10.1137/19M1294800}}.

\bibitem{ChiLuChen}
Y.~Chi, Y.~M. Lu, and Y.~Chen.
\newblock Nonconvex optimization meets low-rank matrix factorization: an
  overview.
\newblock {\em IEEE Trans. Signal Process.}, 67(20):5239--5269, October 2019.
\newblock \href {https://doi.org/10.1109/TSP.2019.2937282}
  {\path{doi:10.1109/TSP.2019.2937282}}.

\bibitem{CohenHallakTeboulle}
E.~Cohen, N.~Hallak, and M.~Teboulle.
\newblock A dynamic alternating direction of multipliers for nonconvex
  minimization with nonlinear functional equality constraints.
\newblock {\em J. Optim. Theory Appl.}, 193:324--353, June 2022.
\newblock \href {https://doi.org/10.1007/s10957-021-01929-5}
  {\path{doi:10.1007/s10957-021-01929-5}}.

\bibitem{CuiPang}
Y.~Cui and J.-S. Pang.
\newblock {\em Modern Nonconvex Nondifferentiable Optimization}, volume~29 of
  {\em MOS-SIAM Ser. Optim.}
\newblock SIAM, Philadelphia, PA, 2021.
\newblock \href {https://doi.org/10.1137/1.9781611976748}
  {\path{doi:10.1137/1.9781611976748}}.

\bibitem{DeMarchi}
A.~{De Marchi}.
\newblock Proximal gradient methods beyond monotony.
\newblock {\em J. Nonsmooth Anal. Optim.}, 4, 2023.
\newblock \href {https://doi.org/10.46298/jnsao-2023-10290}
  {\path{doi:10.46298/jnsao-2023-10290}}.

\bibitem{DemmelSVD}
J.~Demmel.
\newblock Singular value decomposition.
\newblock In Z.~Bai, J.~Demmel, J.~Dongarra, A.~Ruhe, and H.~{van der Vorst},
  editors, {\em Templates for the Solution of Algebraic Eigenvalue Problems},
  volume~11 of {\em Software Environ. Tools}, pages 135--147. SIAM,
  Philadelphia, PA, 2000.
\newblock \href {https://doi.org/10.1137/1.9780898719581.ch6}
  {\path{doi:10.1137/1.9780898719581.ch6}}.

\bibitem{DingChen}
L.~Ding and Y.~Chen.
\newblock Leave-one-out approach for matrix completion: primal and dual
  analysis.
\newblock {\em IEEE Trans. Inform. Theory}, 66(11):7274--7301, November 2020.
\newblock \href {https://doi.org/10.1109/TIT.2020.2992769}
  {\path{doi:10.1109/TIT.2020.2992769}}.

\bibitem{DingDrusvyatskiyFazelHarchaoui}
L.~Ding, D.~Drusvyatskiy, M.~Fazel, and Z.~Harchaoui.
\newblock Flat minima generalize for low-rank matrix recovery.
\newblock {\em Inf. Inference}, 13(2):iaae009, June 2024.
\newblock \href {https://doi.org/10.1093/imaiai/iaae009}
  {\path{doi:10.1093/imaiai/iaae009}}.

\bibitem{EckartYoung1936}
C.~Eckart and G.~Young.
\newblock The approximation of one matrix by another of lower rank.
\newblock {\em Psychometrika}, 1(3):211--218, September 1936.
\newblock \href {https://doi.org/10.1007/BF02288367}
  {\path{doi:10.1007/BF02288367}}.

\bibitem{FacchineiKungurtsevLamparielloScutari}
F.~Facchinei, V.~Kungurtsev, L.~Lampariello, and G.~Scutari.
\newblock Ghost penalties in nonconvex constrained optimization: diminishing
  stepsizes and iteration complexity.
\newblock {\em Math. Oper. Res.}, 46(2):595--627, May 2021.
\newblock \href {https://doi.org/10.1287/moor.2020.1079}
  {\path{doi:10.1287/moor.2020.1079}}.

\bibitem{FepponLermusiaux2018}
F.~Feppon and P.~F.~J. Lermusiaux.
\newblock A geometric approach to dynamical model order reduction.
\newblock {\em SIAM J. Matrix Anal. Appl.}, 39(1):510--538, 2018.
\newblock \href {https://doi.org/10.1137/16M1095202}
  {\path{doi:10.1137/16M1095202}}.

\bibitem{FreundGrigasMazumder}
R.~M. Freund, P.~Grigas, and R.~Mazumder.
\newblock An extended {F}rank--{W}olfe method with ``in-face'' directions, and
  its application to low-rank matrix completion.
\newblock {\em SIAM J. Optim.}, 27(1):319--346, 2017.
\newblock \href {https://doi.org/10.1137/15M104726X}
  {\path{doi:10.1137/15M104726X}}.

\bibitem{GaoAbsil}
B.~Gao and P.-A. Absil.
\newblock A {Riemannian} rank-adaptive method for low-rank matrix completion.
\newblock {\em Comput. Optim. Appl.}, 81(1):67--90, January 2022.
\newblock \href {https://doi.org/10.1007/s10589-021-00328-w}
  {\path{doi:10.1007/s10589-021-00328-w}}.

\bibitem{GillisGlineur2011}
N.~Gillis and F.~Glineur.
\newblock Low-rank matrix approximation with weights or missing data is
  {NP}-hard.
\newblock {\em SIAM J. Matrix Anal. Appl.}, 32(4):1149--1165, 2011.
\newblock \href {https://doi.org/10.1137/110820361}
  {\path{doi:10.1137/110820361}}.

\bibitem{GoldfarbMa}
D.~Goldfarb and S.~Ma.
\newblock Convergence of fixed-point continuation algorithms for matrix rank
  minimization.
\newblock {\em Found. Comput. Math.}, 11(2):183--210, April 2011.
\newblock \href {https://doi.org/10.1007/s10208-011-9084-6}
  {\path{doi:10.1007/s10208-011-9084-6}}.

\bibitem{GolubVanLoan}
G.~H. Golub and C.~F. {Van~Loan}.
\newblock {\em Matrix Computations}.
\newblock Johns Hopkins Studies in the Mathematical Sciences. Johns Hopkins
  University Press, Baltimore, MD, 4th edition, 2013.
\newblock \href {https://doi.org/10.56021/9781421407944}
  {\path{doi:10.56021/9781421407944}}.

\bibitem{HaLiuBarber2020}
W.~Ha, H.~Liu, and R.~F. Barber.
\newblock An equivalence between critical points for rank constraints versus
  low-rank factorizations.
\newblock {\em SIAM J. Optim.}, 30(4):2927--2955, 2020.
\newblock \href {https://doi.org/10.1137/18M1231675}
  {\path{doi:10.1137/18M1231675}}.

\bibitem{Hackbusch}
W.~Hackbusch.
\newblock {\em Tensor Spaces and Numerical Tensor Calculus}, volume~56 of {\em
  Springer Ser. Comput. Math.}
\newblock Springer, Cham, 2nd edition, 2019.
\newblock \href {https://doi.org/10.1007/978-3-030-35554-8}
  {\path{doi:10.1007/978-3-030-35554-8}}.

\bibitem{Harris}
J.~Harris.
\newblock {\em Algebraic Geometry: A First Course}, volume 133 of {\em Grad.
  Texts in Math.}
\newblock Springer, New York, NY, 1992.
\newblock \href {https://doi.org/10.1007/978-1-4757-2189-8}
  {\path{doi:10.1007/978-1-4757-2189-8}}.

\bibitem{HornJohnson}
R.~A. Horn and C.~R. Johnson.
\newblock {\em Matrix Analysis}.
\newblock Cambridge University Press, Cambridge, 2nd edition, 2012.
\newblock \href {https://doi.org/10.1017/CBO9781139020411}
  {\path{doi:10.1017/CBO9781139020411}}.

\bibitem{HosseiniLukeUschmajew2019}
S.~Hosseini, D.~R. Luke, and A.~Uschmajew.
\newblock Tangent and normal cones for low-rank matrices.
\newblock In S.~Hosseini, B.~Mordukhovich, and A.~Uschmajew, editors, {\em
  Nonsmooth Optimization and Its Applications}, volume 170 of {\em Internat.
  Ser. Numer. Math.}, pages 45--53. Birkh\"{a}user, Cham, 2019.
\newblock \href {https://doi.org/10.1007/978-3-030-11370-4_3}
  {\path{doi:10.1007/978-3-030-11370-4_3}}.

\bibitem{HosseiniUschmajew2019}
S.~Hosseini and A.~Uschmajew.
\newblock A gradient sampling method on algebraic varieties and application to
  nonsmooth low-rank optimization.
\newblock {\em SIAM J. Optim.}, 29(4):2853--2880, 2019.
\newblock \href {https://doi.org/10.1137/17M1153571}
  {\path{doi:10.1137/17M1153571}}.

\bibitem{JainMekaDhillon}
P.~Jain, R.~Meka, and I.~Dhillon.
\newblock Guaranteed rank minimization via singular value projection.
\newblock In J.~Lafferty, C.~Williams, J.~{Shawe-Taylor}, R.~Zemel, and
  A.~Culotta, editors, {\em Advances in Neural Information Processing Systems},
  volume~23, pages 937--945. Curran Associates, Inc., 2010.
\newblock URL:
  \url{https://proceedings.neurips.cc/paper/2010/hash/08d98638c6fcd194a4b1e6992063e944-Abstract.html}.

\bibitem{JainNetrapalliSanghavi}
P.~Jain, P.~Netrapalli, and S.~Sanghavi.
\newblock Low-rank matrix completion using alternating minimization.
\newblock In {\em Proceedings of the forty-fifth annual ACM symposium on Theory
  of Computing}, STOC '13, pages 665--674, New York, NY, June 2013. Association
  for Computing Machinery.
\newblock \href {https://doi.org/10.1145/2488608.2488693}
  {\path{doi:10.1145/2488608.2488693}}.

\bibitem{JiaEtAl}
X.~Jia, C.~Kanzow, P.~Mehlitz, and G.~Wachsmuth.
\newblock An augmented {L}agrangian method for optimization problems with
  structured geometric constraints.
\newblock {\em Math. Program.}, 199:1365--1415, May 2023.
\newblock \href {https://doi.org/10.1007/s10107-022-01870-z}
  {\path{doi:10.1007/s10107-022-01870-z}}.

\bibitem{JourneeBachAbsilSepulchre2010}
M.~Journ\'{e}e, F.~Bach, P.-A. Absil, and R.~Sepulchre.
\newblock Low-rank optimization on the cone of positive semidefinite matrices.
\newblock {\em SIAM J. Optim.}, 20(5):2327--2351, 2010.
\newblock \href {https://doi.org/10.1137/080731359}
  {\path{doi:10.1137/080731359}}.

\bibitem{KanzowMehlitz}
C.~Kanzow and P.~Mehlitz.
\newblock Convergence properties of monotone and nonmonotone proximal gradient
  methods revisited.
\newblock {\em J. Optim. Theory Appl.}, 195(2):624--646, November 2022.
\newblock \href {https://doi.org/10.1007/s10957-022-02101-3}
  {\path{doi:10.1007/s10957-022-02101-3}}.

\bibitem{LakshmibaiBrown}
V.~Lakshmibai and J.~Brown.
\newblock {\em The Grassmannian Variety: Geometric and Representation-Theoretic
  Aspects}, volume~42 of {\em Dev. Math.}
\newblock Springer, New York, NY, 2015.
\newblock \href {https://doi.org/10.1007/978-1-4939-3082-1}
  {\path{doi:10.1007/978-1-4939-3082-1}}.

\bibitem{Lee2003}
J.~M. Lee.
\newblock {\em Introduction to Smooth Manifolds}, volume 218 of {\em Grad.
  Texts in Math.}
\newblock Springer, New York, NY, 1st edition, 2003.
\newblock \href {https://doi.org/10.1007/978-0-387-21752-9}
  {\path{doi:10.1007/978-0-387-21752-9}}.

\bibitem{Lee2018}
J.~M. Lee.
\newblock {\em Introduction to Riemannian Manifolds}, volume 176 of {\em Grad.
  Texts in Math.}
\newblock Springer, Cham, 2nd edition, 2018.
\newblock \href {https://doi.org/10.1007/978-3-319-91755-9}
  {\path{doi:10.1007/978-3-319-91755-9}}.

\bibitem{LevinKileelBoumal2023}
E.~Levin, J.~Kileel, and N.~Boumal.
\newblock Finding stationary points on bounded-rank matrices: a geometric
  hurdle and a smooth remedy.
\newblock {\em Math. Program.}, 199:831--864, May 2023.
\newblock \href {https://doi.org/10.1007/s10107-022-01851-2}
  {\path{doi:10.1007/s10107-022-01851-2}}.

\bibitem{LevinKileelBoumal2025}
E.~Levin, J.~Kileel, and N.~Boumal.
\newblock The effect of smooth parametrizations on nonconvex optimization
  landscapes.
\newblock {\em Math. Program.}, 209:63--111, January 2025.
\newblock \href {https://doi.org/10.1007/s10107-024-02058-3}
  {\path{doi:10.1007/s10107-024-02058-3}}.

\bibitem{LiPong}
G.~Li and T.~K. Pong.
\newblock Global convergence of splitting methods for nonconvex composite
  optimization.
\newblock {\em SIAM J. Optim.}, 25(4):2434--2460, 2015.
\newblock \href {https://doi.org/10.1137/140998135}
  {\path{doi:10.1137/140998135}}.

\bibitem{LiSoMa2020}
J.~Li, A.~M.-C. So, and W.-K. Ma.
\newblock Understanding notions of stationarity in nonsmooth optimization: a
  guided tour of various constructions of subdifferential for nonsmooth
  functions.
\newblock {\em IEEE Signal Processing Magazine}, 37(5):18--31, September 2020.
\newblock \href {https://doi.org/10.1109/MSP.2020.3003845}
  {\path{doi:10.1109/MSP.2020.3003845}}.

\bibitem{LiLuo2023}
X.~Li and Z.~Luo.
\newblock Normal cones intersection rule and optimality analysis for low-rank
  matrix optimization with affine manifolds.
\newblock {\em SIAM J. Optim.}, 33(3):1333--1360, 2023.
\newblock \href {https://doi.org/10.1137/22M147863X}
  {\path{doi:10.1137/22M147863X}}.

\bibitem{LiSongXiu2019}
X.~Li, W.~Song, and N.~Xiu.
\newblock Optimality conditions for rank-constrained matrix optimization.
\newblock {\em J. Oper. Res. Soc. China}, 7(2):285--301, 2019.
\newblock \href {https://doi.org/10.1007/s40305-019-00245-0}
  {\path{doi:10.1007/s40305-019-00245-0}}.

\bibitem{LiuBarber2020}
H.~Liu and R.~F. Barber.
\newblock Between hard and soft thresholding: optimal iterative thresholding
  algorithms.
\newblock {\em Inf. Inference}, 9(4):899--933, December 2020.
\newblock \href {https://doi.org/10.1093/imaiai/iaz027}
  {\path{doi:10.1093/imaiai/iaz027}}.

\bibitem{LuenbergerYe}
D.~G. Luenberger and Y.~Ye.
\newblock {\em Linear and Nonlinear Programming}, volume 228 of {\em Internat.
  Ser. Oper. Res. Management Sci.}
\newblock Springer, Cham, 5th edition, 2021.
\newblock \href {https://doi.org/10.1007/978-3-030-85450-8}
  {\path{doi:10.1007/978-3-030-85450-8}}.

\bibitem{Nesterov2018}
Y.~Nesterov.
\newblock {\em Lectures on Convex Optimization}, volume 137 of {\em Springer
  Optim. Appl.}
\newblock Springer, Cham, 2nd edition, 2018.
\newblock \href {https://doi.org/10.1007/978-3-319-91578-4}
  {\path{doi:10.1007/978-3-319-91578-4}}.

\bibitem{NguyenKimShim}
L.~T. Nguyen, J.~Kim, and B.~Shim.
\newblock Low-rank matrix completion: a contemporary survey.
\newblock {\em IEEE Access}, 7:94215--94237, 2019.
\newblock \href {https://doi.org/10.1109/ACCESS.2019.2928130}
  {\path{doi:10.1109/ACCESS.2019.2928130}}.

\bibitem{OlikierAbsil2022}
G.~Olikier and P.-A. Absil.
\newblock On the continuity of the tangent cone to the determinantal variety.
\newblock {\em Set-Valued Var. Anal.}, 30:769--788, 2022.
\newblock \href {https://doi.org/10.1007/s11228-021-00618-9}
  {\path{doi:10.1007/s11228-021-00618-9}}.

\bibitem{OlikierAbsil2023}
G.~Olikier and P.-A. Absil.
\newblock An apocalypse-free first-order low-rank optimization algorithm with
  at most one rank reduction attempt per iteration.
\newblock {\em SIAM J. Matrix Anal. Appl.}, 44(3):1421--1435, 2023.
\newblock \href {https://doi.org/10.1137/22M1518256}
  {\path{doi:10.1137/22M1518256}}.

\bibitem{OlikierGallivanAbsil2022}
G.~Olikier, K.~A. Gallivan, and P.-A. Absil.
\newblock An apocalypse-free first-order low-rank optimization algorithm.
\newblock Preprint, January 2022.
\newblock \href {https://arxiv.org/abs/2201.03962v1}
  {\path{arXiv:2201.03962v1}}.

\bibitem{OlikierMlinaricAbsilUschmajew}
G.~Olikier, P.~Mlinari\'{c}, P.-A. Absil, and A.~Uschmajew.
\newblock The tangent cone to the real determinantal variety: various
  expressions and a proof.
\newblock {\em Set-Valued Var. Anal.}, to appear, 2026.

\bibitem{OlikierUschmajewVandereycken}
G.~Olikier, A.~Uschmajew, and B.~Vandereycken.
\newblock {Gauss--Southwell} type descent methods for low-rank matrix
  optimization.
\newblock {\em J. Optim. Theory Appl.}, 206(6), 2025.
\newblock \href {https://doi.org/10.1007/s10957-025-02682-9}
  {\path{doi:10.1007/s10957-025-02682-9}}.

\bibitem{OlikierWaldspurger}
G.~Olikier and I.~Waldspurger.
\newblock Projected gradient descent accumulates at {B}ouligand stationary
  points.
\newblock {\em SIAM J. Optim.}, 35(2):1004--1029, 2025.
\newblock \href {https://doi.org/10.1137/24M1692782}
  {\path{doi:10.1137/24M1692782}}.

\bibitem{PangRazaviyaynAlvarado}
J.-S. Pang, M.~Razaviyayn, and A.~Alvarado.
\newblock Computing {B}-stationary points of nonsmooth {DC} programs.
\newblock {\em Math. Oper. Res.}, 42(1):95--118, February 2017.
\newblock \href {https://doi.org/10.1287/moor.2016.0795}
  {\path{doi:10.1287/moor.2016.0795}}.

\bibitem{ParkKyrillidisCaramanisSanghavi}
D.~Park, A.~Kyrillidis, C.~Caramanis, and S.~Sanghavi.
\newblock Finding low-rank solutions via nonconvex matrix factorization,
  efficiently and provably.
\newblock {\em SIAM J. Imaging Sci.}, 11(4):2165--2204, 2018.
\newblock \href {https://doi.org/10.1137/17M1150189}
  {\path{doi:10.1137/17M1150189}}.

\bibitem{Pauwels}
E.~Pauwels.
\newblock A note on stationarity in constrained optimization.
\newblock Preprint, February 2024.
\newblock \href {https://arxiv.org/abs/2402.09831} {\path{arXiv:2402.09831}}.

\bibitem{Polak1971}
E.~Polak.
\newblock {\em Computational Methods in Optimization: A Unified Approach},
  volume~77 of {\em Math. Sci. Eng.}
\newblock Academic Press, New York, NY, 1971.

\bibitem{RebjockBoumal}
Q.~Rebjock and N.~Boumal.
\newblock Optimization over bounded-rank matrices through a desingularization
  enables joint global and local guarantees.
\newblock Preprint, June 2024.
\newblock \href {https://arxiv.org/abs/2406.14211} {\path{arXiv:2406.14211}}.

\bibitem{RockafellarWets}
R.~T. Rockafellar and R.~J.-B. Wets.
\newblock {\em Variational Analysis}, volume 317 of {\em Grundlehren Math.
  Wiss.}
\newblock Springer, Berlin, Heidelberg, 1998.
\newblock Corrected 3rd printing 2009.
\newblock \href {https://doi.org/10.1007/978-3-642-02431-3}
  {\path{doi:10.1007/978-3-642-02431-3}}.

\bibitem{ScheelScholtes}
H.~Scheel and S.~Scholtes.
\newblock Mathematical programs with complementarity constraints: stationarity,
  optimality, and sensitivity.
\newblock {\em Math. Oper. Res.}, 25(1):1--22, February 2000.
\newblock \href {https://doi.org/10.1287/moor.25.1.1.15213}
  {\path{doi:10.1287/moor.25.1.1.15213}}.

\bibitem{SchneiderUschmajew2015}
R.~Schneider and A.~Uschmajew.
\newblock Convergence results for projected line-search methods on varieties of
  low-rank matrices via {{\L}}ojasiewicz inequality.
\newblock {\em SIAM J. Optim.}, 25(1):622--646, 2015.
\newblock \href {https://doi.org/10.1137/140957822}
  {\path{doi:10.1137/140957822}}.

\bibitem{SrebroJaakkola}
N.~Srebro and T.~Jaakkola.
\newblock Weighted low-rank approximations.
\newblock In T.~Fawcett and N.~Mishra, editors, {\em Proceedings of the
  Twentieth International Conference on Machine Learning}, ICML'03, pages
  720--727. AAAI Press, 2003.
\newblock URL:
  \url{https://aaai.org/papers/icml03-094-weighted-low-rank-approximations/}.

\bibitem{TannerWei2013}
J.~Tanner and K.~Wei.
\newblock Normalized iterative hard thresholding for matrix completion.
\newblock {\em SIAM J. Sci. Comput.}, 35(5):S104--S125, 2013.
\newblock \href {https://doi.org/10.1137/120876459}
  {\path{doi:10.1137/120876459}}.

\bibitem{TongMaChi}
T.~Tong, C.~Ma, and Y.~Chi.
\newblock Accelerating ill-conditioned low-rank matrix estimation via scaled
  gradient descent.
\newblock {\em J. Mach. Learn. Res.}, 22(150):1--63, 2021.
\newblock URL: \url{https://jmlr.org/papers/v22/20-1067.html}.

\bibitem{Tsakiris2024}
M.~C. Tsakiris.
\newblock Results on the algebraic matroid of the determinantal variety.
\newblock {\em Trans. Amer. Math. Soc.}, 377(1):731--751, 2024.
\newblock \href {https://doi.org/10.1090/tran/9055}
  {\path{doi:10.1090/tran/9055}}.

\bibitem{UschmajewVandereycken2015}
A.~Uschmajew and B.~Vandereycken.
\newblock Greedy rank updates combined with {Riemannian} descent methods for
  low-rank optimization.
\newblock In {\em 2015 International Conference on Sampling Theory and
  Applications (SampTA)}, pages 420--424, 2015.
\newblock \href {https://doi.org/10.1109/SAMPTA.2015.7148925}
  {\path{doi:10.1109/SAMPTA.2015.7148925}}.

\bibitem{UschmajewVandereycken2020}
A.~Uschmajew and B.~Vandereycken.
\newblock Geometric methods on low-rank matrix and tensor manifolds.
\newblock In P.~Grohs, M.~Holler, and A.~Weinmann, editors, {\em Handbook of
  Variational Methods for Nonlinear Geometric Data}, pages 261--313. Springer,
  Cham, 2020.
\newblock \href {https://doi.org/10.1007/978-3-030-31351-7_9}
  {\path{doi:10.1007/978-3-030-31351-7_9}}.

\bibitem{UschmajewVandereycken2020IMA}
A.~Uschmajew and B.~Vandereycken.
\newblock On critical points of quadratic low-rank matrix optimization
  problems.
\newblock {\em IMA J. Numer. Anal.}, 40(4):2626--2651, October 2020.
\newblock \href {https://doi.org/10.1093/imanum/drz061}
  {\path{doi:10.1093/imanum/drz061}}.

\bibitem{Vandereycken2013}
B.~Vandereycken.
\newblock Low-rank matrix completion by {Riemannian} optimization.
\newblock {\em SIAM J. Optim.}, 23(2):1214--1236, 2013.
\newblock \href {https://doi.org/10.1137/110845768}
  {\path{doi:10.1137/110845768}}.

\bibitem{WangYinZeng}
Y.~Wang, W.~Yin, and J.~Zeng.
\newblock Global convergence of {ADMM} in nonconvex nonsmooth optimization.
\newblock {\em J. Sci. Comput.}, 78:29--63, January 2019.
\newblock \href {https://doi.org/10.1007/s10915-018-0757-z}
  {\path{doi:10.1007/s10915-018-0757-z}}.

\bibitem{WeiCaiChanLeung}
K.~Wei, J.-F. Cai, T.~F. Chan, and S.~Leung.
\newblock Guarantees of {R}iemannian optimization for low rank matrix recovery.
\newblock {\em SIAM J. Matrix Anal. Appl.}, 37(3):1198--1222, 2016.
\newblock \href {https://doi.org/10.1137/15M1050525}
  {\path{doi:10.1137/15M1050525}}.

\bibitem{YangFengSuykens}
Y.~Yang, Y.~Feng, and J.~A.~K. Suykens.
\newblock Correntropy based matrix completion.
\newblock {\em Entropy}, 20(3), March 2018.
\newblock \href {https://doi.org/10.3390/e20030171}
  {\path{doi:10.3390/e20030171}}.

\bibitem{Zhang}
R.~Y. Zhang.
\newblock Sharp global guarantees for nonconvex low-rank recovery in the noisy
  overparameterized regime.
\newblock {\em SIAM J. Optim.}, 35(3):2128--2154, 2025.
\newblock \href {https://doi.org/10.1137/24M1697980}
  {\path{doi:10.1137/24M1697980}}.

\bibitem{ZhouEtAl2016}
G.~Zhou, W.~Huang, K.~A. Gallivan, P.~{Van Dooren}, and P.-A. Absil.
\newblock A {Riemannian} rank-adaptive method for low-rank optimization.
\newblock {\em Neurocomputing}, 192:72--80, June 2016.
\newblock \href {https://doi.org/10.1016/j.neucom.2016.02.030}
  {\path{doi:10.1016/j.neucom.2016.02.030}}.

\bibitem{ZhouYangZhaoYu}
X.~Zhou, C.~Yang, H.~Zhao, and W.~Yu.
\newblock Low-rank modeling and its applications in image analysis.
\newblock {\em ACM Computing Surveys}, 47(2), January 2015.
\newblock \href {https://doi.org/10.1145/2674559} {\path{doi:10.1145/2674559}}.

\bibitem{ZhuLiTangWakin}
Z.~Zhu, Q.~Li, G.~Tang, and M.~B. Wakin.
\newblock The global optimization geometry of low-rank matrix optimization.
\newblock {\em IEEE Trans. Inform. Theory}, 67(2):1308--1331, February 2021.
\newblock \href {https://doi.org/10.1109/TIT.2021.3049171}
  {\path{doi:10.1109/TIT.2021.3049171}}.

\end{thebibliography}
\end{document}